\newtheorem{lma}{Lemma}[section]
\newaliascnt{thmCt}{lma}
\newtheorem{thm}[thmCt]{Theorem}
\newaliascnt{corCt}{lma}
\newtheorem{cor}[corCt]{Corollary}
\newaliascnt{propCt}{lma}
\newtheorem{prop}[propCt]{Proposition}
\newtheorem*{thm*}{Theorem}
\newtheorem*{dfn*}{Definition}
\newtheorem*{cor*}{Corollary}
\newtheorem*{rmk*}{Remark}
\newtheorem*{prop*}{Proposition}
\newtheorem*{exa*}{Example}
\theoremstyle{definition}
\newaliascnt{prgCt}{lma}
\newtheorem{prg}[prgCt]{}
\newaliascnt{dfnCt}{lma}
\newtheorem{dfn}[dfnCt]{Definition}
\newaliascnt{rmkCt}{lma}
\newtheorem{rmk}[rmkCt]{Remark}
\newaliascnt{rmksCt}{lma}
\newaliascnt{ntnCt}{lma}
\newtheorem{ntn}[ntnCt]{Notation}
\newaliascnt{qstCt}{lma}
\newaliascnt{prblCt}{lma}
\newaliascnt{exaCt}{lma}
\newcommand{\CC}{\mathcal{C}}
\newcommand{\T}{\mathbb{T}}
\newcommand{\C}{\mathbb{C}}
\newcommand{\N}{\mathbb{N}}
\newcommand{\R}{\mathbb{R}}
\newcommand{\Z}{\mathbb{Z}}
\newcommand{\K}{\mathrm{K}}
\newcommand{\KK}{\mathcal{K}}
\newcommand{\ZZ}{\mathcal{Z}}
\DeclareMathOperator{\im}{im}
\DeclareMathOperator{\Int}{int}
\DeclareMathOperator{\diag}{diag}
\DeclareMathOperator{\card}{card}
\DeclareMathOperator{\dcp}{dcp_{ch}}
\DeclareMathOperator{\diam}{diam}
\DeclareMathOperator{\spectrum}{sp}
\DeclareMathOperator{\cokernel}{coker}
\newcommand{\alg}{\mathrm{alg}}
\newcommand{\CatCa}{C^*}
\DeclareMathOperator{\Tr}{Tr}
\DeclareMathOperator{\wsp}{wsp}
\DeclareMathOperator{\Cu}{Cu}
\DeclareMathOperator{\NCCW}{NCCW}
\DeclareMathOperator{\CW}{CW}
\DeclareMathOperator{\AI}{AI}
\DeclareMathOperator{\A}{A}
\DeclareMathOperator{\ASH}{ASH}
\DeclareMathOperator{\AH}{AH}
\DeclareMathOperator{\AF}{AF}
\DeclareMathOperator{\Lsc}{Lsc}
\DeclareMathOperator{\supp}{supp}
\DeclareMathOperator{\Hom}{Hom}
\DeclareMathOperator{\id}{id}
\DeclareMathAlphabet{\mymathbb}{U}{bbold}{m}{n}
\begin{document}
\onehalfspacing
\title{Towards a classification of unitary elements of $\CatCa$-algebras}
\author{Laurent Cantier}

\address{Laurent Cantier\newline
Departament de Matem\`{a}tiques \\
Universitat Aut\`{o}noma de Barcelona \\
08193 Bellaterra, Spain\newline
Institute of Mathematics\\ Czech Academy of Sciences\\ Zitna 25\\ 115 67 Praha 1\\ Czechia}
\email[]{laurent.cantier@uab.cat}

\thanks{The author was supported by the Spanish Ministry of Universities and the European Union-NextGenerationEU through a Margarita Salas grant and partially supported by the project PID2020-113047GB-I00.}
\keywords{$\CatCa$-algebras, unitary elements,  Cuntz semigroup, de la Harpe-Skandalis determinant}

\begin{abstract}
In \cite{C22} the author conjectures and partially shows that the Cuntz semigroup classifies unitary elements of unital $\AF$-algebras. We provide a complete proof by addressing the existence part of the conjecture, under a mild adjustment of both domain and codomain of the functor $\Cu$. We also tackle the classification beyond the $\AF$ case and more particularly, we look at unitary elements of what we call $\AH_1$-algebras. We obtain positive progress as far as the existence part is concerned. Nevertheless, we reveal that extra information is needed for the uniqueness part of the classification that the Cuntz semigroup fails to capture. 
\end{abstract}
\maketitle

%%%%%%%%% Intro
\section{Introduction}

Classification of $\CatCa$-algebras is a vast and long-standing field in operator algebras. A powerful and modern approach, which consists of classifying *-homomorphisms has yielded many deep classification results, e.g. \cite{R12}, \cite{EGLN21}, or \cite{GLN1,GLN2}. One of the most notable results ought to be the completion of the Elliott's conjecture where the work of many people has yielded the classification of a large class within the simple $\mathcal{Z}$-stable setting in terms of $\K$-theory and traces. See \cite{W18} for a general overview. 

In the non-simple setting, the most elegant result might very well be the classification of unital one-dimensional $\NCCW$-complexes with trivial $\K_1$-group and their inductive limits by means of the Cuntz semigroup obtained by Robert in \cite{R12}. It is worth mentioning that the main proof is based on the previous work done by Ciuperca and Elliott in \cite{CE08} followed by the work of Robert and Santiago in \cite{RS10} where they engage the classification of *-homomorphisms from $\CC_0(]0,1])$ to any $\CatCa$-algebra of stable rank one in terms of Cuntz semigroup. Uniqueness part of the classification is done in \cite{CE08} and improved in \cite{RS10} while the existence part of the classification is achieved in \cite{RS10}. Subsequently, Robert was able to expand the domain $\CC_0(]0,1])$ to the class of direct sums one dimensional $\NCCW$-complexes with trivial $\K_1$-group and their inductive limits. As an immediate corollary following from the well-known approximate intertwining argument, a complete classification of the latter class of $\CatCa$-algebras (including $\AI$-algebras) is obtained. 

This naturally raises the questions: can we go beyond the trivial $\K_1$-group hypothesis? In particular, could we apply a similar procedure to classify e.g. all $\A\!\T$-algebras?

A first step towards this direction would consist in classifying *-homomorphisms from $\CC(\T)$ to distinct classes of $\CatCa$-algebras. A main obstruction arises if our candidate is again the Cuntz semigroup alone and our codomain class is somewhat too large. This has been illustrated in many situations (e.g. in \cite{C23}) and some kind of $\K_1$-information or its Hausdorffized version will be needed. Note that these questions have also been raised in \cite[Question 16.7]{GP23}.

This paper is meant to serve as a starting point for this new and promising investigation. The results we obtain are multiple. We first focus on a well-chosen codomain class, namely the unital $\AF$-algebras. We follow up the work done in \cite{C22} and we complete the classification of unitary elements of these $\CatCa$-algebras by means of the Cuntz semigroup. In other words, we show that the Cuntz semigroup classifies *-homomorphisms from $\CC(\T)$ to any unital $\AF$-algebra $A$. To be more precise, we use a mild adjustment of the functor $\Cu\colon \mathcal{T}\!\star\AF\longrightarrow \Cu_{\wsp} $ , that we now define from a join subcategory containing all unital *-homomorphisms from $\mathcal{C}(\T)$ to any $\AF$-algebra, to the wide subcategory of $\Cu$-semigroups with what we call \emph{weakly semiprojective $\Cu$-morphisms}. 

\begin{thm*}
Let $A$ be a unital $\AF$-algebra. Let $\alpha:\Lsc(\T,\overline{\N})\longrightarrow \Cu(A)$ be a weakly semiprojective $\Cu$-morphism with $\alpha(1_\T)=[1_A]$. Then there exists a unitary element $u$ in $A$, unique up to approximate unitary equivalence, such that $ \alpha= \Cu(\varphi_u)$. 

As a consequence, $\Cu\colon \mathcal{T}\!\star\AF\longrightarrow \Cu_{\wsp} $  classifies unitary elements of unital $\AF$-algebras.
\end{thm*}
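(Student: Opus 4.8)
The plan is to prove the two halves of the statement --- existence and uniqueness of $u$ --- separately, and then to read off the closing sentence as a formal consequence. The uniqueness half, namely that two unitaries $u,v$ of a unital $\AF$-algebra $A$ with $\Cu(\varphi_u)=\Cu(\varphi_v)$ are approximately unitarily equivalent, is in essence the uniqueness part already treated in \cite{C22}, which I would either invoke directly or reprove along the lines indicated below; the genuinely new content is the existence half, and there the natural strategy is the classical one: reduce to finite-dimensional building blocks, solve the problem there by hand, and glue by an Elliott-type approximate intertwining. Once both halves are in place, the last sentence follows: $\Cu$ is a functor on $\mathcal{T}\!\star\AF$ by the very definition of that category, and it sends inner automorphisms to the identity, so approximately unitarily equivalent $*$-homomorphisms out of $\CC(\T)$ have equal image under $\Cu$; combined with existence and uniqueness this says precisely that, for every unital $\AF$-algebra $A$, the map $u\mapsto\Cu(\varphi_u)$ descends to a bijection between approximate-unitary-equivalence classes of unitaries of $A$ and weakly semiprojective $\Cu$-morphisms $\Lsc(\T,\overline{\N})\to\Cu(A)$ sending $1_\T$ to $[1_A]$, naturally in $A$.

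For the finite-dimensional base case, write $A=\varinjlim(A_n,\iota_n)$ with each $A_n$ finite-dimensional and each $\iota_n$ unital, so that $\Cu(A)=\varinjlim\Cu(A_n)$ in $\Cu$ and $[1_A]$ is the image of the compact element $[1_{A_n}]$. If $A_n=\bigoplus_j M_{k_j}$ then $\Cu(A_n)=\bigoplus_j\overline{\N}$, and a $\Cu$-morphism $\beta\colon\Lsc(\T,\overline{\N})\to\Cu(A_n)$ with $\beta(1_\T)=[1_{A_n}]$ splits as a tuple of $\Cu$-morphisms $\Lsc(\T,\overline{\N})\to\overline{\N}$, each sending $1_\T$ to the corresponding $k_j<\infty$. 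Evaluating on open arcs and using that a $\Cu$-morphism preserves $0$, order, addition, suprema of increasing sequences and the way-below relation $\ll$, one checks that such a morphism is exactly the counting function of a multiset of $k_j$ points of $\T$, i.e. the eigenvalue list (with multiplicities) of a diagonal unitary $v^{(j)}\in M_{k_j}$; finite additivity on arcs together with the supremum condition is what rules out a diffuse part. Hence every such $\beta$ equals $\Cu(\varphi_v)$ for the unitary $v=\bigoplus_j v^{(j)}\in A_n$, which is unique up to unitary conjugation in $A_n$.

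Next I would descend the given $\alpha$ to finite stages, and this is where the hypothesis that $\alpha$ is a \emph{weakly semiprojective} $\Cu$-morphism is used: it is the $\Cu$-level counterpart of the (classical) semiprojectivity of $\CC(\T)$, and it guarantees that for any prescribed finite set of arcs $U$ and any tolerance there are an index $n$ and a $\Cu$-morphism $\alpha_n\colon\Lsc(\T,\overline{\N})\to\Cu(A_n)$ with $\Cu(\iota_{n,\infty})\circ\alpha_n$ agreeing with $\alpha$ on those arcs up to the tolerance in the $\ll$-sense, these lifts being moreover compatible along a cofinal subsequence of stages; since $[1_A]=\iota_{n,\infty}([1_{A_n}])$ is compact one may also arrange $\alpha_n(1_\T)=[1_{A_n}]$ after increasing $n$. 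By the base case each $\alpha_n$ is $\Cu(\varphi_{u_n})$ for a unitary $u_n\in A_n$. Translating the compatibility of the $\alpha_n$ through the partial multiplicities of the connecting maps, the unitaries $\iota_n(u_n)$ and $u_{n+1}$ coincide up to unitary conjugation and an arbitrarily small norm perturbation --- precisely the input of a one-sided Elliott approximate intertwining, which produces conjugates $u_n'$ of $u_n$ with $\iota_{n,\infty}(u_n')$ norm-convergent to a unitary $u\in A$; continuity of $\Cu$ then gives $\Cu(\varphi_u)=\varinjlim\Cu(\varphi_{u_n})=\varinjlim\alpha_n=\alpha$, with $\alpha(1_\T)=[1_A]$ automatic. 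For uniqueness one approximates $u$ and $v$ by finite-spectrum unitaries coming from finite stages, observes that equality of the $\Cu$-data forces, stage by stage, equal eigenvalue multiplicities hence unitary equivalence there, and patches by a further intertwining.

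I expect the main obstacle to lie in the bookkeeping of this descent-and-glue step. One must verify that weak semiprojectivity really supplies lifts $\alpha_n$ that are simultaneously unital, defined along a cofinal system of stages, and mutually compatible to within summable errors, so that the approximate intertwining genuinely closes up inside $A$ rather than in some further inductive limit; and one must check that the class of weakly semiprojective $\Cu$-morphisms is stable under this limiting process, since it is this stability that makes the codomain $\Cu_{\wsp}$ --- and hence the final categorical statement --- well posed. The passage from an abstract arc-counting function to a concrete diagonal unitary is elementary; threading those concrete choices coherently through the connecting maps of $A$, while keeping track of the unit, is where the real effort is spent.
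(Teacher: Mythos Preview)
Your proposal is correct and follows essentially the same architecture as the paper: solve the problem for finite-dimensional building blocks, use weak semiprojectivity to descend $\alpha$ to finite stages, lift there, and assemble via a Cauchy/intertwining argument powered by the uniqueness theorem of \cite{C22}. Two execution differences are worth flagging. First, for the finite-dimensional case you assert directly that a $\Cu$-morphism $\Lsc(\T,\overline{\N})\to\overline{\N}$ sending $1_\T\mapsto k$ is exactly the counting function of a $k$-point multiset; the paper does not prove (or need) this structural claim, but instead constructs for each $n$ a diagonal unitary that agrees with $\alpha$ only on the finite test set $\Lambda_n$, and then uses uniqueness to show these approximants form a $d_U$-Cauchy sequence whose limit realises $\alpha$ exactly. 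Your claim is true, but ``finite additivity plus suprema rules out a diffuse part'' hides a genuine argument; the paper's approximate route avoids it. Second, for the gluing you want compatibility of the finite-stage lifts $\alpha_n$ along the inductive system and an Elliott intertwining there; the paper instead pushes each $u_n$ forward into $A$ immediately and runs the entire Cauchy argument inside $A$, so it never needs the $\alpha_n$ to be compatible with one another --- only that each $\sigma_{i_n\infty}\circ\alpha_n$ approximates $\alpha$. Your version works, but it carries exactly the bookkeeping burden you anticipate in your final paragraph; the paper's version is leaner because all comparisons happen in the limit algebra, where uniqueness is already available.
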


Note that throughout the paper, we may interchangeably speak about unitary elements of a $\CatCa$-algebra $A$ and *-homomorphisms from $\CC(\T)$ to $A$, due to the bijective correspondence between unitary elements of $A$ and $\Hom_{\CatCa}(\CC(\T),A)$. (See more details below.) 

Subsequently, we naturally go beyond the $\AF$ case and study larger classes of $\CatCa$-algebras.  
Regarding uniqueness, we are able to obtain positive results in the real rank zero setting. See \autoref{cor:RR0}. Nevertheless, the key point is that uniqueness fails in general. We exhibit a couple of examples providing obstructions, even in the simple, real rank one, trivial $\K_1$-group setting. 

These examples suggest that the de la Harpe-Skandalis determinant, closely related to the Hausdorffized algebraic $\K_1$-group, should be added to the Cuntz semigroup to obtain satisfactory results. 
We expose both examples in the following theorem, but the reader should be aware that they will appear in separate instances since their nature and purpose in the exposition of the manuscript are quite different. 

\begin{thm*} Let $A$ be either the Jiang-Su algebra $\mathcal{Z}$ or $\CC(X)\otimes M_{2^\infty}$ where $X$ is any one-dimensional compact $\CW$-complex. Then there exists two unitary elements $u,v$ in $A$, i.e. two *-homomorphisms $\varphi_u,\varphi_v:\CC(\T)\longrightarrow A$, such that $\Cu(\varphi_u)=\Cu(\varphi_v)$ and yet $\varphi_u\nsim_{aue} \varphi_v$.
\end{thm*}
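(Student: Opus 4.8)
\emph{Proof strategy.} In both cases the invariant that will separate $u$ and $v$ is the de la Harpe--Skandalis determinant: recall that $\Delta_A\colon U_0(A)\longrightarrow \Aff T(A)/\overline{\rho_A(\K_0(A))}$ is a continuous, conjugation-invariant group homomorphism, so that if $u\sim_{aue}v$ with $u,v\in U_0(A)$, writing $v=\lim_n w_n u w_n^*$ for unitaries $w_n$ and using $\Delta_A(w_n u w_n^*)=\Delta_A(u)$ we get $\Delta_A(v)=\Delta_A(u)$. Hence it suffices to produce two unitaries (automatically in $U_0(A)$, being products of exponentials) with $\Cu(\varphi_u)=\Cu(\varphi_v)$ but $\Delta_A(u)\neq\Delta_A(v)$. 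Both $\mathcal{Z}$ and $\CC(X)\otimes M_{2^\infty}$ are unital, $\mathcal{Z}$-stable and of stable rank one; in particular they have strict comparison of positive elements by traces, and a positive element not Cuntz-equivalent to a projection is determined up to Cuntz equivalence by its rank function $\tau\mapsto d_\tau(\cdot)$ --- for $\mathcal{Z}$ this is immediate from $\Cu(\mathcal{Z})\cong\N\sqcup(0,\infty]$, and for $\CC(X)\otimes M_{2^\infty}$ it is part of the structure theory of the Cuntz semigroup of such algebras.

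\emph{Step 1: the Cuntz invariant sees only the spectral distribution.} Under $\Cu(\CC(\T))\cong\Lsc(\T,\overline{\N})$ the generator $\mathbf 1_U$ of an open $U\subsetneq\T$ equals $[g_U]$ for any $g_U\in\CC_0(U)_+$ that is strictly positive on $U$, while $\mathbf 1_\T=[1]$; thus $\Cu(\varphi_u)$ is the $\Cu$-morphism $\mathbf 1_U\mapsto[g_U(u)]$, $\mathbf 1_\T\mapsto[1_A]$. If $\sigma(u)=\T$, then for every $U\subsetneq\T$ the element $g_U(u)$ has $0$ as a non-isolated point of its spectrum, hence is not Cuntz-equivalent to a projection, so $[g_U(u)]$ is determined by $\tau\mapsto d_\tau(g_U(u))=\mu^u_\tau(U)$, where $\mu^u_\tau$ is the spectral distribution of $u$ with respect to $\tau\in T(A)$. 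Consequently, if $u,v$ are unitaries of $A$ with $\sigma(u)=\sigma(v)=\T$ and $\mu^u_\tau=\mu^v_\tau$ for all $\tau\in T(A)$, then $\Cu(\varphi_u)=\Cu(\varphi_v)$.

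\emph{Step 2: the two examples.} Fix a self-adjoint $h$ in $\mathcal{Z}$ (resp.\ in $M_{2^\infty}$) with $\sigma(h)=[0,1]$ and $\tau$-distribution the normalized Lebesgue measure --- such $h$ exists, e.g.\ from a suitably chosen unital $*$-homomorphism $\CC[0,1]\to\mathcal{Z}$, resp.\ as a norm limit of normalized diagonal matrices in $M_{2^\infty}$. Put $u:=e^{2\pi i h}$, so $\sigma(u)=\T$ and $\mu^u_\tau$ is normalized Lebesgue measure on $\T$ for every $\tau$ (the push-forward of Lebesgue measure on a unit-length interval under $t\mapsto e^{2\pi i t}$). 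For $A=\mathcal{Z}$, set $v:=-u$; then $\sigma(v)=\T$ and $\mu^v_\tau$ is again Lebesgue, so $\Cu(\varphi_u)=\Cu(\varphi_v)$ by Step~1, whereas $\Delta_{\mathcal{Z}}(v)=\Delta_{\mathcal{Z}}(-1_{\mathcal{Z}})+\Delta_{\mathcal{Z}}(u)=\tfrac12+\Delta_{\mathcal{Z}}(u)$ in $\Aff T(\mathcal{Z})/\overline{\rho_{\mathcal{Z}}(\K_0(\mathcal{Z}))}\cong\R/\Z$ and $\tfrac12\neq0$; hence $u\nsim_{aue}v$. For $A=\CC(X)\otimes M_{2^\infty}$, regard $u$ as the corresponding constant unitary, pick $g\in\CC(X,\R)$ that is not locally constant (possible since $X$ is one-dimensional), and set $v:=e^{2\pi i g}u$, i.e.\ $v(x)=e^{2\pi i g(x)}u$. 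For each $x$ the unitary $v(x)$ has full spectrum $\T$ and the same (Lebesgue) distribution on $\T$ as $u(x)$, since a unit-length interval again pushes forward to Lebesgue measure; as $T(A)$ is the simplex of probability measures on $X$, this gives $\mu^u_\tau=\mu^v_\tau$ for all $\tau$, whence $\Cu(\varphi_u)=\Cu(\varphi_v)$ by Step~1. On the other hand $\Delta_A(v)-\Delta_A(u)=\Delta_A(e^{2\pi i g}1_A)$ is the class of $g$ in $\Aff T(A)/\overline{\rho_A(\K_0(A))}$, which --- this group being $\CC(X,\R)$ modulo its locally constant functions --- is non-zero; hence $u\nsim_{aue}v$ in this case too.

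\emph{Main obstacle.} The delicate point is Step~1, i.e.\ being certain that $\Cu(\varphi_u)$ retains nothing beyond the spectral distributions of $u$: this relies on strict comparison together with the exact description of the non-compact part of the Cuntz semigroup of a $\mathcal{Z}$-stable algebra of stable rank one, and on the computations of $\Cu(\CC(\T))$, of the trace simplices, and of the pairings $\rho$ with $\K_0$ for $\mathcal{Z}$ and for $\CC(X)\otimes M_{2^\infty}$. By contrast the inputs on the determinant side --- existence of $h$ with prescribed distribution, the evaluations $\Delta_A(e^{2\pi i h})=\widehat h$ and $\Delta_{\mathcal{Z}}(-1_{\mathcal{Z}})=\tfrac12$, and the continuity and conjugation-invariance of $\Delta_A$ --- are standard.
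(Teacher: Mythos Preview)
Your overall strategy---construct unitaries of full spectrum whose spectral measures agree for every trace, then separate them by the de~la~Harpe--Skandalis determinant---is exactly the paper's, and your concrete choices are essentially the same: for $\CC(X)\otimes M_{2^\infty}$ the paper takes $g=\id_{[0,1]}$ where you allow any non-locally-constant $g$, and for $\mathcal Z$ your pair $(u,-u)$ is a cosmetic variant of the paper's $(u_1,u_2)=(e^{2\pi i a_1},e^{2\pi i a_2})$. For $A=\mathcal Z$ your Step~1 is correct, and indeed identical to the paper's argument via $\Cu(\mathcal Z)\cong\N\sqcup(0,\infty]$.

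There is, however, a genuine gap in Step~1 for $A=\CC(X)\otimes M_{2^\infty}$. Your assertion that in a $\mathcal Z$-stable stable-rank-one algebra ``a positive element not Cuntz-equivalent to a projection is determined up to Cuntz equivalence by its rank function'' is \emph{false} once $A$ is non-simple. On $X=[0,1]$, take a projection $p\in M_{2^\infty}$ with $\tau_M(p)=\tfrac12$ and a positive $q\in M_{2^\infty}$ with $0$ non-isolated in $\sigma(q)$ and $d_{\tau_M}(q)=\tfrac12$; set $a(x)=xp$ and $b(x)=xq$. Both have $0$ non-isolated in their spectra, neither is Cuntz-equivalent to a projection (any projection in $A\otimes\KK$ has constant rank on the connected space $[0,1]$, while $d_{\tau_x}(a)=d_{\tau_x}(b)=\tfrac12\cdot 1_{(0,1]}(x)$ is not), and their rank functions agree on every trace. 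Yet restricting to $[\tfrac12,1]$ sends $[a]$ to the compact class of the constant projection $p$ and $[b]$ to the non-compact class of the constant $q$; evaluating at a point shows these differ in $\Cu(M_{2^\infty})$, so $[a]\neq[b]$ in $\Cu(A)$. What is true---and what your particular elements $g_U(u),g_U(v)$ do satisfy---is the stronger hypothesis of being \emph{pointwise} soft (each fibre has $0$ non-isolated in its spectrum), and one then needs the finer structure theory of purely non-compact elements in $\Cu$ of non-simple $\mathcal Z$-stable algebras. That is not the statement you wrote, and it requires its own justification.

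The paper sidesteps this issue entirely: it realises $w\in M_{2^\infty}$ as a $d_U$-limit of explicit diagonal unitaries $w_n\in M_{2^n}$, sets $u_n=1_{[0,1]}\otimes w_n$ and $v_n=e^{2\pi i\,\id}\otimes w_n$, and by a direct eigenvalue count shows $dd_{\Cu}(\Cu(\varphi_{u_n}),\Cu(\varphi_{v_n}))\leq 2^{-n}$; passing to the limit yields $\Cu(\varphi_u)=\Cu(\varphi_v)$ without invoking any strict-comparison or structure-theory machinery for non-simple algebras.
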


With regard to existence, we obtain very satisfactory results. Firstly, we prove a technical result (\autoref{thm:existencefx}) for any $\CatCa$-algebra $B$ given by \textquoteleft building blocks\textquoteright\ over 1-dimensional compact $\CW$-complexes. More concretely, we construct an approximate lift, up to arbitrary precision, for any abstract $\Cu$-morphism $\alpha\colon\Lsc(\T,\overline{\N})\longrightarrow \Cu(B)$, which yields the following result by means of what we call \emph{Cauchy sequences of $\Cu$-morphisms}.

\begin{thm*}
Let $B$ be a direct sum of matrix algebras of continuous functions over 1-dimensional compact $\CW$-complexes. Let $\alpha:\Lsc(\T,\overline{\N})\longrightarrow \Cu(B)$ be a $\Cu$-morphism such that $\alpha(1_\T)=[1_B]$.

Then there exists a sequence of unitary elements $(u_n)_n$ in $B$ such that $(\Cu(\varphi_{u_n}))_n$ is Cauchy and converges towards $\alpha$ with respect to $dd_{\Cu}$.
\end{thm*}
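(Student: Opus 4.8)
The plan is to approximate $\alpha$ by a Cauchy sequence of $\Cu$-morphisms each of which is the image under $\Cu$ of an honest unitary. The starting point is the technical existence result quoted above (\autoref{thm:existencefx}): for a fixed tolerance $\varepsilon>0$ it produces a unitary $u$ in $B$ whose induced $\Cu$-morphism $\Cu(\varphi_u)$ is within $\varepsilon$ of $\alpha$ in the appropriate metric on $\Cu$-morphisms $\Lsc(\T,\overline\N)\to\Cu(B)$. So the first step is to recall the definition of $dd_{\Cu}$ and of a Cauchy sequence of $\Cu$-morphisms, and to verify that $\Lsc(\T,\overline\N)$ is a countably based (in fact singly generated, in the relevant sense) $\Cu$-semigroup so that this metric structure is available and separated.

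Next I would run the approximation in a telescoping fashion. Choose a sequence $\varepsilon_n\searrow 0$ summable, say $\varepsilon_n=2^{-n}$. Apply \autoref{thm:existencefx} with tolerance $\varepsilon_1$ to get $u_1$ with $dd_{\Cu}(\Cu(\varphi_{u_1}),\alpha)\le\varepsilon_1$; respecting the normalization $\alpha(1_\T)=[1_B]=[\varphi_{u_n}(1_\T)]$, which holds automatically since each $u_n$ is a unitary in the unital algebra $B$. Then iterate: having produced $u_n$, apply the technical result again (the statement of \autoref{thm:existencefx} being uniform in the precision) to obtain $u_{n+1}$ with $dd_{\Cu}(\Cu(\varphi_{u_{n+1}}),\alpha)\le\varepsilon_{n+1}$. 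The triangle inequality then gives $dd_{\Cu}(\Cu(\varphi_{u_n}),\Cu(\varphi_{u_{n+1}}))\le\varepsilon_n+\varepsilon_{n+1}$, so $(\Cu(\varphi_{u_n}))_n$ is Cauchy, and since each term is within $\varepsilon_n$ of $\alpha$ the limit is exactly $\alpha$.

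The main obstacle is not in this bookkeeping but in the input: \autoref{thm:existencefx} must be arranged so that its approximate lifts can be produced at \emph{every} prescribed precision with no hidden incompatibility between successive choices — i.e. the construction of the approximate lift must be genuinely uniform in $\varepsilon$, and the metric $dd_{\Cu}$ must be defined so that closeness of $\Cu$-morphisms at one scale does not obstruct refinement at the next. Concretely one must check that the "building block'' decomposition of $B$ (a finite direct sum of $M_k(\CC(X))$ with $X$ a $1$-dimensional $\CW$-complex) is compatible with the way $\alpha$ restricts to each summand, and that the normalization $\alpha(1_\T)=[1_B]$ passes to each summand as $\alpha_i(1_\T)=[1_{B_i}]$ so that each local approximate lift is again a unitary of the correct size; assembling the local unitaries into a single $u_n\in B$ is then routine. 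A secondary point to verify is that $dd_{\Cu}$ is a genuine metric (separated) on the relevant set of $\Cu$-morphisms out of $\Lsc(\T,\overline\N)$, so that "converges towards $\alpha$'' is unambiguous; this should follow from $\Lsc(\T,\overline\N)$ being countably generated together with the standard properties of the Cuntz semigroup of $B$.
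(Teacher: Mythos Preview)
Your proposal is essentially the same argument as the paper's: apply \autoref{thm:existencefx} at each level $n$ to obtain $u_n$ with $dd_{\Cu}(\Cu(\varphi_{u_n}),\alpha)\leq 2^{-(n-2)}$, then use a triangle-type estimate to conclude that the sequence is Cauchy and converges to $\alpha$.

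Two small corrections are in order. First, $dd_{\Cu}$ is only a semimetric satisfying the \emph{$2$-relaxed} triangle inequality $dd_{\Cu}(\alpha,\gamma)\leq 2\bigl(dd_{\Cu}(\alpha,\beta)+dd_{\Cu}(\beta,\gamma)\bigr)$ (see \autoref{prg:discretedistance}), so your bound should read $dd_{\Cu}(\Cu(\varphi_{u_n}),\Cu(\varphi_{u_{n+1}}))\leq 2(\varepsilon_n+\varepsilon_{n+1})$; this is harmless since the resulting sequence still satisfies the geometric decay required by \autoref{lma:Cauchy}. Second, your worries about ``uniformity in $\varepsilon$'' and ``hidden incompatibility between successive choices'' are unfounded: the unitaries $u_n$ are produced by independent applications of \autoref{thm:existencefx} and no coherence between them is ever used. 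Separation of $dd_{\Cu}$ follows directly from the inequality $dd_{\Cu}\leq d_{\Cu}\leq 2\,dd_{\Cu}$ recorded in \autoref{prg:discretedistance}, since $d_{\Cu}$ is a genuine metric.
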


Let us mention that with current methods, the above does not suffice to conclude to an actual lift, as it would also require a uniqueness result. 
Thereafter, we are still able to deduce the following, which too, can only be stated in terms of Cauchy sequences.

\begin{thm*}
Let $A$ be a unital $\AH_1$-algebra. Let $\alpha:\Lsc(\T,\overline{\N})\longrightarrow \Cu(A)$ be a weakly semiprojective $\Cu$-morphism with $\alpha(1_\T)=[1_A]$.

Then there exists a sequence of unitary elements $(u_n)_n$ in $A$ such that $(\Cu(\varphi_{u_n}))_n$ is Cauchy and converges towards $\alpha$ with respect to $dd_{\Cu}$.
\end{thm*}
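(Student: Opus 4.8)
The plan is to reduce the $\AH_1$ case to the building-block case already handled in the technical existence result (the theorem about direct sums of matrix algebras over $1$-dimensional compact $\CW$-complexes), and then to promote the resulting approximate data to a Cauchy sequence using the weak semiprojectivity hypothesis. Write $A = \varinjlim (A_i, \iota_i)$ as an inductive limit of building blocks $A_i$ (finite direct sums of $M_{n_i}(\CC(X_i))$ with $X_i$ a $1$-dimensional compact $\CW$-complex), with connecting maps $\iota_{ij}\colon A_i \to A_j$ and $\iota_{i,\infty}\colon A_i \to A$. Applying $\Cu$ (in the adjusted sense, landing in $\Cu_{\wsp}$) gives $\Cu(A) = \varinjlim \Cu(A_i)$ in the category $\Cu$. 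The first step is to use the weak semiprojectivity of $\alpha\colon \Lsc(\T,\overline{\N}) \to \Cu(A)$: by definition of a weakly semiprojective $\Cu$-morphism, for any finite subset and tolerance $\varepsilon$ there is $i$ and a $\Cu$-morphism $\alpha_i\colon \Lsc(\T,\overline{\N}) \to \Cu(A_i)$ such that $\Cu(\iota_{i,\infty}) \circ \alpha_i$ agrees with $\alpha$ up to $\varepsilon$ on the prescribed data, and moreover one can arrange $\alpha_i(1_\T)$ to be close to (or, after absorbing into a hereditary subalgebra / cutting down, equal to) $[1_{A_i}]$, using that $\alpha(1_\T) = [1_A] = \Cu(\iota_{i,\infty})([1_{A_i}])$ and that $[1_{A_i}]$ is a compact element. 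This is where the precise formulation of ``weakly semiprojective'' pays off: it is exactly tailored to produce such local factorizations, which an arbitrary $\Cu$-morphism would not admit.

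Next, for each such $\alpha_i$ I would invoke the building-block existence theorem (Theorem~\ref{thm:existencefx}, the one stated just above the target theorem) applied to $B = A_i$: it produces a sequence of unitaries in $A_i$ whose images under $\Cu \circ \varphi_{(\cdot)}$ form a Cauchy sequence converging to $\alpha_i$ with respect to $dd_{\Cu}$; in particular, for the fixed tolerance we extract a single unitary $w_i \in A_i$ with $dd_{\Cu}(\Cu(\varphi_{w_i}), \alpha_i)$ small. Pushing forward via $\iota_{i,\infty}$, set $u = \iota_{i,\infty}(w_i) \in A$, a unitary with $\alpha(1_\T) = [1_A]$ matched. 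Since $\Cu(\iota_{i,\infty})$ is a $\Cu$-morphism hence contractive/order-preserving for the distance-like structure $dd_{\Cu}$ (and $\Cu(\iota_{i,\infty}) \circ \Cu(\varphi_{w_i}) = \Cu(\varphi_{\iota_{i,\infty}(w_i)})$), we get $dd_{\Cu}(\Cu(\varphi_u), \alpha) \le dd_{\Cu}(\Cu(\iota_{i,\infty})\circ\Cu(\varphi_{w_i}), \Cu(\iota_{i,\infty})\circ\alpha_i) + dd_{\Cu}(\Cu(\iota_{i,\infty})\circ\alpha_i, \alpha)$, and both terms are controlled. Running this argument along a sequence of tolerances $\varepsilon_n \to 0$ (and a cofinal sequence of the defining finite data) yields unitaries $u_n \in A$ with $dd_{\Cu}(\Cu(\varphi_{u_n}), \alpha) \to 0$; a routine triangle-inequality argument then upgrades $(\Cu(\varphi_{u_n}))_n$ to a Cauchy sequence converging to $\alpha$, which is exactly the assertion.

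The main obstacle I expect is the bookkeeping around the base point and the compatibility of the approximations: one must ensure that the locally produced $\alpha_i$ can be taken to be genuine $\Cu$-morphisms sending $1_\T$ exactly to $[1_{A_i}]$ (not merely approximately), since the building-block theorem is stated with the normalization $\alpha(1_\T) = [1_B]$; handling the discrepancy between $\alpha(1_\T)$ and $\Cu(\iota_{i,\infty})([1_{A_i}])$ requires either a perturbation/absorption argument inside $A_i$ (replacing $A_i$ by a corner or a slightly larger building block $M_k(A_i)$ so that the unit maps correctly) or an appeal to the stability properties of weakly semiprojective $\Cu$-morphisms with respect to units. A secondary, more technical point is verifying that $dd_{\Cu}$ behaves well under the connecting maps and under composition with $\Cu(\varphi_{(\cdot)})$ — i.e. that $\Cu$ of a $*$-homomorphism is $1$-Lipschitz for $dd_{\Cu}$ and that the limit structure $\Cu(A) = \varinjlim \Cu(A_i)$ is compatible with $dd_{\Cu}$ in the sense needed to transfer Cauchyness — but this should follow from the general theory of the metric $dd_{\Cu}$ on $\Cu$-semigroups and the functoriality already in place. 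Everything else is an assembly of the two quoted theorems via an approximate-intertwining-style diagram chase, without needing a new uniqueness input (which, as the introduction stresses, is precisely what is missing for a genuine lift).
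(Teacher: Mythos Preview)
Your proposal is correct and follows essentially the same approach as the paper: the paper's own proof simply says ``arguing similarly as in the proof of \autoref{thm:classificationAF}'' (use weak semiprojectivity to factor through a building block, apply \autoref{thm:existencefx} there to get a unitary, then push forward) to obtain unitaries $u_n$ with $dd_{\Cu}(\Cu(\varphi_{u_n}),\alpha)\leq C/2^n$, and concludes via \autoref{lma:Cauchy}. Your worries about the unit normalization $\alpha_i(1_\T)=[1_{A_i}]$ and about contractivity of post-composition for $dd_{\Cu}$ are legitimate bookkeeping points that the paper also leaves implicit.
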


We end the manuscript with open questions and partial leads towards future classification results.
\textbf{Acknowledgments.} 
The author would like to thank L. Robert for many fruitful discussions and the conception of both of the examples providing obstructions outside the $\AF$ setting, and also the proofreader for comments that have improved the redaction of the manuscript. 

\section{Preliminaries}
$\hspace{-0,34cm}\bullet\,\,\textbf{Unitary elements - Classification of *-homomorphisms}.$ Let us first establish some details about the classification of *-homomorphisms from $\CC(\T)$. There is a bijective correspondence between the set $\mathcal{U}(A)$ of unitary elements of a (unital) $\CatCa$-algebra $A$ and the set $\Hom_{\CatCa,1}(\CC(\T),A)$ of unital *-homomorphisms from $\CC(\T)$ to $A$, defined through functional calculus. More specifically, we have 
\[
\begin{array}{ll}
	\varphi:\mathcal{U}(A)\simeq \Hom_{\CatCa,1}(\mathcal{C}(\T),A)\\
	\hspace{1cm}u\longmapsto \varphi_u
\end{array}
\] 
where $\varphi_u(f):=f(u)$ for any $f\in \CC(\T)$. 

Let $A,B$ be unital $\CatCa$-algebras and let $\phi,\psi:A\longrightarrow B$ be *-homomorphisms. We say that $\phi$ is \emph{approximately unitarily equivalent to $\psi$} and we write $\phi\sim_{aue} \psi$ if, there exists a sequence $(w_n)_n$ of unitary elements in $B$ such that $w_n\phi(x)w_n^*\underset{n\rightarrow\infty}\longrightarrow\psi(x)$ for all $x\in A$. (Similarly, we say that two unitary elements $u,v$ of a unital $\CatCa$-algebra $A$ are approximately unitarily equivalent if, there exists a sequence $(w_n)_n$ of unitary elements in $A$ such that $w_n u w_n^*\underset{n\rightarrow\infty}\longrightarrow v$.)

Let $F:\CatCa\longrightarrow\CC$ be a functor. We say that $F$ \emph{classifies *-homomorphisms from $A$ to $B$} if $\Hom_{\CC}(F(A),F(B))\simeq \Hom_{\CatCa}(A,B)/\sim_{aue}$. The injectivity of the latter map is often referred to as the \emph{uniqueness} part of the classification, while the surjectivity is often referred to as the \emph{existence} part of the classification. 

In our context, we shall say that a functor $F$ \emph{classifies unitary elements of $A$} whenever $F$ classifies unital *-homomorphisms from $\CC(\T)$ to $A$. We may naturally say that $F$ \emph{distinguishes unitary elements of $A$} whenever we (only) have the uniqueness part of the classification.\\

$\hspace{-0,34cm}\bullet\,\,\textbf{Cuntz semigroups - Distance between morphisms}.$ 
The Cuntz semigroup was introduced in \cite{C78} and has been a powerful tool for classification since the milestone paper \cite{CEI08}. We refer the reader to \cite{GP23} for a state of the art survey on the matter where they shall find all the basics they might need. (The reader can also check the preliminaries of  \cite{C23}.) Another fundamental and complete paper can be found in \cite{APT18}. Let us briefly remind the basic definitions and a specific property regarding the category $\Cu$.
\begin{prg}
Let $(S,\leq)$ be an ordered monoid and let $x,y$ in $S$. We say that $x$ is \emph{way-below} $y$ and we write $x\ll y$ if, for all increasing sequences $(z_n)_{n\in\N}$ in $S$ that have a supremum, if $\sup\limits_{n\in\N} z_n\geq y$, then there exists $k$ such that $z_k\geq x$. This is an auxiliary relation on $S$ called the \emph{way-below relation} or the \emph{compact-containment relation}. In particular, $x\ll y$ implies $x\leq y$, and we say that $x$ is a \emph{compact element} whenever $x\ll x$. 

We say that $S$ is an \emph{abstract Cuntz semigroup}, or a $\Cu$-semigroup, if $S$ satisfies the above axioms.

(O1) Every increasing sequence of elements in $S$ has a supremum. 

(O2) For any $x\in S$, there exists a $\ll$-increasing sequence $(x_n)_{n\in\N}$ in $S$ such that $\sup\limits_{n\in\N} x_n= x$.

(O3) Addition and the compact containment relation are compatible.

(O4) Addition and suprema of increasing sequences are compatible.

We say that a map $\alpha:S\longrightarrow T$ is a $\Cu$-morphism if $\alpha$ is an ordered monoid morphism preserving the compact-containment relation and suprema of increasing sequences.
\end{prg}

We recall some facts on the comparison of $\Cu$-morphisms and more specifically on $\Cu$-morphisms with domain $\Lsc(\T,\overline{\N})\simeq \Cu(\CC(\T))$. A metric on $\Hom_{\Cu}(\Lsc(\T,\overline{\N}),T)$ has been introduced in several papers such as \cite[Definition 3.1]{JSV18}.
\begin{prg}[$\Cu$-metric]
Let $\alpha, \beta:\Lsc(\T,\overline{\N})\longrightarrow T$ be any two $\Cu$-morphisms. We define
\[  d_{\Cu}(\alpha,\beta):=\inf \Bigl\{ r>0\mid \forall U\in\mathcal{O}(\T), \alpha(1_{U})\leq\beta(1_{U_{r}})  \text{ and }  \beta(1_{U})\leq\alpha(1_{U_{r}}) \Bigr\}
\] 
where $\mathcal{O}(\T):=\{$Open sets of $\T\}$ and $U_r:=\underset{x\in U}{\cup}B_r(x)$. 
If the infimum does not exist, we set the value to $\infty$.
This defines a metric that we refer to as \emph{the $\Cu$-metric}.
\end{prg}

On the other hand, a topologically equivalent semi-metric has been introduced in \cite{C22}. To define it, we first need to introduce the abstract notion of comparison of $\Cu$-morphisms.

\begin{dfn}\cite[Definition 3.9]{C22}
Let $S,T$ be $\Cu$-semigroups and let $\Lambda\subseteq S$. Consider maps $\alpha,\beta:\Lambda\longrightarrow T$ preserving the order and the compact-containment relation.

We say that $\alpha$ and $\beta$ \emph{compare on $\Lambda$} and we write $\alpha\underset{\Lambda}{\simeq}\beta $ if, for any $g,h\in\Lambda$ such that $g\ll h$ (in $S$), we have that $\alpha(g)\leq \beta(h)$ and $\beta(g)\leq \alpha(h)$.
\end{dfn}

\begin{prg}[Discrete $\Cu$-semimetric]\label{prg:discretedistance}
Let $n\in\N$ and let $(x_k)_0^{2^n}$ be an equidistant partition of $\T$ of size $\frac{1}{2^n}$ with $x_0=x_{2^n}$. For any $k\in \{1,\dots,2^n\}$, we define the open interval $U_k:=]x_{k-1};x_k[$ of $\T$. It is immediate to see that $\{\overline{U_k}\}_1^{2^n}$ is a (finite) closed cover of $\T$. Now define
\[
\Lambda_n:=\{f\in\Lsc(\T,\{0,1\})\mid f_{|U_k} \text{ is constant for any }k\in \{1,\dots, 2^n\}\}.
\]
For any two $\Cu$-morphisms $\alpha, \beta:\Lsc(\T,\overline{\N})\longrightarrow T$ we define
\[dd_{\Cu}(\alpha,\beta):= \inf\limits_{n\in\N} \{ \frac{1}{2^n} \mid \alpha\underset{\Lambda_n}{\simeq}\beta \}.
\]  
If the infimum does not exist, we set the value to $\infty$.
We refer to $dd_{\Cu}$ as the \emph{discrete $\Cu$-semimetric}.
\end{prg}
Throughout the paper, we will make use of the discrete $\Cu$-semimetric for practical reasons. We recall that $dd_{\Cu}$ and $d_{\Cu}$ are topologically equivalent as shown in \cite[Proposition 5.5]{C22}. More specifically, one has that $dd_{\Cu}\leq d_{\Cu}\leq 2dd_{\Cu}$ and hence, $dd_{\Cu}$ satisfies the 2-relaxed triangle inequality, in the sense that $dd_{\Cu}(\alpha,\gamma)\leq 2(dd_{\Cu}(\alpha,\beta)+dd_{\Cu}(\beta,\gamma))$ for any $\alpha,\beta,\gamma\in \Hom_{\Cu}(\Lsc(\T,\overline{\N}),T)$ where $T$ is any $\Cu$-semigroup.

Furthermore, for any two unitary elements $u,v$ of a unital $\CatCa$-algebra $A$, one always has \[d_{\Cu}(\Cu(\varphi_u),\Cu(\varphi_v)),dd_{\Cu}(\Cu(\varphi_u),\Cu(\varphi_v))\leq d_U(\varphi_u,\varphi_v)\]
where $d_U(\varphi_u,\varphi_v):=\inf\limits_{w\in\mathcal{U}(A)}\sup\limits_{f\in\rm Lip(\T)} \Vert w\varphi_u(f)w^*-\varphi_v(f)\Vert$.

We end these preliminaries with some key facts about the de la Harpe-Skandalis determinant, introduced in \cite{dHS84}. This determinant will play a key role in providing obstructions.\\

$\hspace{-0,34cm}\bullet\,\,\textbf{De la Harpe-Skandalis determinant}.$ 
We refer the reader to \cite{dHS84} for details on this determinant and we briefly recall some facts that will be needed later on. For a unital $\CatCa$-algebra $A$, we denote the subgroup of $A$ generated by additive commutators $[a,b]:=ab-ba$ by $[A,A]$ and we define the \emph{universal trace on $A$} to be the quotient map $\Tr:A\longrightarrow A/\overline{[A,A]}$. We extend this (continuous) tracial linear map to $M_{\infty}(A)$ in a canonical way that we also denote $\Tr:M_{\infty}(A)\longrightarrow A/\overline{[A,A]}$. Now for any smooth path $\xi:[0,1]\longrightarrow \mathcal{U}^0_{\infty}(A)$, we define 
\[
\hat{\Delta}_{\Tr}(\xi):=\frac{1}{2i\pi}\int_0^1 \Tr(\xi'(t)\xi^{-1}(t))dt
\]
and the application $\hat{\Delta}_{\Tr}$ naturally induces a group morphism $\underline{\Tr}:\K_0(A)\longrightarrow A/\overline{[A,A]}$ by sending $[e]\longmapsto \Tr(e)$ for any projection $e\in M_{\infty}(A)$. (This is done via the identification $\K_0(A)\simeq \pi_1(\mathcal{U}^0_{\infty}(A))$.) We now define a group morphism called the \emph{de la Harpe-Skandalis determinant} as follows
\[
\begin{array}{ll}
\Delta_{\Tr}:\mathcal{U}^0_{\infty}(A)\longrightarrow \cokernel(\underline{\Tr})\\
\hspace{1,9cm} u\longmapsto [\hat{\Delta}_{\Tr}(\xi_u)]
\end{array}
\]
where $\xi_u$ is any smooth path connecting the identity to $u$. In particular, $\Delta_{\Tr}(e^{2i\pi h})=[\Tr(h)]$ for any self-adjoint $h\in M_{\infty}(A)$. 

As proven in the seminal work of de la Harpe and Skandalis, we have $D\mathcal{U}^0_{\infty}(A)\subseteq\ker \Delta_{\Tr}\subseteq\overline{\ker \Delta_{\Tr}}=\overline{D\mathcal{U}^0_{\infty}(A)}$ (see \cite[Proposition 4]{dHS84}). Nevertheless, these inclusions may be strict in some specific cases. A fortiori,  $\Delta_{\Tr}$ is not continuous in general. To remedy this, Thomsen introduced the \emph{non-stable de la Harpe-Skandalis determinant}
 \[
 \overline{\Delta}_{\Tr}\colon \mathcal{U}^0_{\infty}(A)\longrightarrow (A/\overline{[A,A]})/\overline{\im(\underline{\Tr})}
 \]
to be the composition of $\Delta_{\Tr}$ with the canonical projection of $\cokernel(\underline{\Tr})\relbar\joinrel\twoheadrightarrow  (A/\overline{[A,A]})/\overline{\im(\underline{\Tr})}$. It is shown in \cite[Theorem 3.2]{T95} that $\overline{\Delta}_{\Tr}$ is a continuous group morphism whose kernel is $\overline{D\mathcal{U}^0_{\infty}(A)}$. 

This is why it is often useful to consider the non-stable version of the de la Harpe-Skandalis  determinant. We end these preliminaries by the following proposition that be used throughout the manuscript and illustrates the force of the (non-stable) de la Harpe-Skandalis determinant.

\begin{prop}
\label{prop:dhs}
Let $A$ be a unital $\CatCa$-algebra. Let $u,v\in A$ be unitary elements connected to the identity. Then $\overline{\Delta}_{\Tr}(u)=\overline{\Delta}_{\Tr}(v)$ whenever $u\sim_{aue} v$.
\end{prop}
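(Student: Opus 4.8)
The plan is to prove that approximate unitary equivalence of $u$ and $v$ forces their non-stable de la Harpe–Skandalis determinants to coincide, by exploiting the continuity of $\overline{\Delta}_{\Tr}$ (from \cite[Theorem 3.2]{T95}) together with invariance of the universal trace under inner automorphisms. First I would record the elementary fact that for any unitary $w\in A$ and any $a\in M_\infty(A)$ one has $\Tr(waw^*)=\Tr(a)$: indeed $waw^*-a = [w,aw^*]\in[A,A]$, so $waw^*$ and $a$ have the same image in $A/\overline{[A,A]}$. Consequently, if $\xi_u\colon[0,1]\to\mathcal{U}^0_\infty(A)$ is a smooth path from $1$ to $u$, then $w\xi_u w^*$ is a smooth path from $1$ to $wuw^*$, and $\hat{\Delta}_{\Tr}(w\xi_u w^*)=\frac{1}{2i\pi}\int_0^1\Tr\big((w\xi_u w^*)'(w\xi_u^{-1}w^*)\big)\,dt=\frac{1}{2i\pi}\int_0^1\Tr(\xi_u'\xi_u^{-1})\,dt=\hat{\Delta}_{\Tr}(\xi_u)$, where I use $(w\xi w^*)' = w\xi' w^*$ and the trace property termwise. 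Passing to the quotient $\cokernel(\underline{\Tr})$ and then to $(A/\overline{[A,A]})/\overline{\im(\underline{\Tr})}$ gives $\overline{\Delta}_{\Tr}(wuw^*)=\overline{\Delta}_{\Tr}(u)$ for every $w\in\mathcal{U}(A)$. (One should also note $wuw^*$ is again connected to the identity, so $\overline{\Delta}_{\Tr}$ is defined on it.)

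Next I would invoke the hypothesis $u\sim_{aue}v$: there is a sequence $(w_n)_n$ of unitaries in $A$ with $w_n u w_n^* \to v$ in norm. By the previous paragraph, $\overline{\Delta}_{\Tr}(w_n u w_n^*)=\overline{\Delta}_{\Tr}(u)$ for all $n$. Now I would appeal to the continuity of $\overline{\Delta}_{\Tr}\colon\mathcal{U}^0_\infty(A)\to (A/\overline{[A,A]})/\overline{\im(\underline{\Tr})}$ established by Thomsen: since $w_n u w_n^* \to v$, continuity yields $\overline{\Delta}_{\Tr}(w_n u w_n^*)\to\overline{\Delta}_{\Tr}(v)$. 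As the left-hand side is the constant sequence $\overline{\Delta}_{\Tr}(u)$, and the target is a (Hausdorff) topological group, the limit is unique and equals $\overline{\Delta}_{\Tr}(u)$; hence $\overline{\Delta}_{\Tr}(v)=\overline{\Delta}_{\Tr}(u)$, which is the desired conclusion.

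The one genuine subtlety — and what I expect to be the main point to handle carefully rather than a deep obstacle — is the interface between the norm convergence $w_n u w_n^* \to v$ taking place in $A=M_1(A)\subseteq M_\infty(A)$ and the continuity statement for $\overline{\Delta}_{\Tr}$, which a priori is phrased on $\mathcal{U}^0_\infty(A)$ with its (inductive limit / norm) topology; one must make sure that the inclusion $\mathcal{U}^0(A)\hookrightarrow\mathcal{U}^0_\infty(A)$ is a topological embedding so that norm convergence in $A$ is convergence in the relevant topology, and that all the $w_nuw_n^*$ and $v$ lie in the connected component $\mathcal{U}^0_\infty(A)$ (true since $u$, hence each $w_nuw_n^*$, and their limit $v$ are connected to $1$). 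Everything else is the trace-invariance computation above, which is routine. It is worth remarking that this proposition is exactly the tool needed to exhibit the obstruction examples of the second displayed theorem: two unitaries may have equal Cuntz invariants yet differ in $\overline{\Delta}_{\Tr}$, and \autoref{prop:dhs} then certifies $\varphi_u\nsim_{aue}\varphi_v$.
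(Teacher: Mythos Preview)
Your proof is correct and reaches the conclusion by the same two ingredients as the paper --- invariance of $\overline{\Delta}_{\Tr}$ under inner conjugation, followed by Thomsen's continuity of $\overline{\Delta}_{\Tr}$ --- but you justify the first ingredient differently. You verify $\overline{\Delta}_{\Tr}(wuw^*)=\overline{\Delta}_{\Tr}(u)$ by a direct computation: the identity $waw^*-a=[w,aw^*]$ gives $\Tr(w\,\cdot\,w^*)=\Tr(\cdot)$, so conjugating a smooth path $\xi_u$ by $w$ leaves $\hat{\Delta}_{\Tr}$ unchanged. The paper instead argues structurally: $wuw^*u^{-1}\in D\mathcal{U}(A)$, and a $3\times 3$ matrix trick shows $D\mathcal{U}(A)\subseteq D\mathcal{U}^0_\infty(A)$; since $\ker\overline{\Delta}_{\Tr}=\overline{D\mathcal{U}^0_\infty(A)}$ by \cite[Theorem~3.2]{T95}, one gets $\overline{\Delta}_{\Tr}(wuw^*u^{-1})=0$ directly. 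Your route is more elementary (no matrix trick, no appeal to the kernel description), while the paper's route makes the role of the commutator subgroup explicit and avoids manipulating smooth paths. The subtlety you flag about the topology is harmless: the embedding $\mathcal{U}(A)\hookrightarrow\mathcal{U}_\infty(A)$, $u\mapsto u\oplus 1$, is isometric, and the codomain is Hausdorff since one quotients by the \emph{closure} $\overline{\im(\underline{\Tr})}$.
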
 

\begin{proof}
 Observe that for any unitary element $w\in A$, the element $wuw^*u^*$ belongs to  the subgroup $D\mathcal{U}(A)$ of $\mathcal{U}(A)$ generated by multiplicative commutators $(u,v):=uvu^{-1}v^{-1}$. Moreover using a well-known trick, namely writing $\begin{psmallmatrix}uvu^{-1}v^{-1}\\&1 \\&&1\end{psmallmatrix}=\begin{psmallmatrix}u\\&u^{-1} \\&&1\end{psmallmatrix}\begin{psmallmatrix}v\\&1 \\&&v^{-1}\end{psmallmatrix}\begin{psmallmatrix}u^{-1}\\&u \\&&1\end{psmallmatrix}\begin{psmallmatrix}v^{-1}\\&1 \\&&v\end{psmallmatrix}$
we deduce that $D\mathcal{U}(A)\subseteq D\mathcal{U}_0(M_3(A))\subseteq D\mathcal{U}^0_{\infty}(A)$.  

Let $(w_n)_n$ be a sequence of unitary elements in $A$ such that $w_nuw_n^*\underset{n\rightarrow\infty}\longrightarrow v$. Then $\lim\limits_n w_nuw_n^*u^*$ belongs to $\overline{D\mathcal{U}(A)}\subseteq\overline{D\mathcal{U}^0_{\infty}(A)}=\ker\overline{\Delta}_{\Tr}$. (The second step is given by \cite[Theorem 3.2]{T95}.) 

We obtain that $\overline{\Delta}_{\Tr}(\lim\limits_n w_nuw_n^*u^*)=0$, that is, $\overline{\Delta}_{\Tr}(\lim\limits_n w_nuw_n^*)+\overline{\Delta}_{\Tr}(u^*)=0$. We conclude $\overline{\Delta}_{\Tr}(v)=\overline{\Delta}_{\Tr}(u)$.
\end{proof}

\section{Unitary Elements of AF algebras}
This section is dedicated to the existence part of the classification of unitary elements of unital $\AF$-algebras. Since $\CC(\T)$ is semiprojective, it is natural (and likely necessary) to impose a similar property on the abstract $\Cu$-morphisms we are aiming to lift. More specifically, we define a notion of \emph{weakly semiprojective $\Cu$-morphisms} and we prove that for any unital $\AF$-algebra $A$ and any weakly semiprojective $\Cu$-morphism $\alpha:\Lsc(\T,\overline{\N})\longrightarrow \Cu(A)$ such that $\alpha(1_\T)=[1_A]$, we can find a *-homomorphism $\phi:\CC(\T)\longrightarrow A$ lifting $\alpha$, i.e. such that $\Cu(\phi)=\alpha$. Our approach parallels the uniqueness part: we first prove the result for the finite-dimensional case, then extend it to the general case using weak semiprojectivity. \\

$\hspace{-0,34cm}\bullet\,\,\textbf{The finite dimensional case}.$ 
As a first step, let us focus on the finite dimensional case. We show that we can lift any abstract $\Cu$-morphism, up to arbitrary precision, and the desired lift will then be built combining the uniqueness result. It is worth noting that no specific properties are assumed for the $\Cu$-morphisms in this setting.

Before diving into the core of the matter, let us give an outlook of the proof. The strategy involves constructing a unitary element in the finite-dimensional $\CatCa$-algebra $B$. To do so, we first look at the values of the $\Cu$-morphism evaluated at (indicator maps of) some well-chosen connected arcs obtained from the canonical finite closed cover $\{\overline{U_k}\}_1^{2^n}$ of $\T$. These values are being used to \textquoteleft fill-up\textquoteright\ a diagonal matrix with entries in $\T$ which defines a unitary element of $B$, whose induced *-homomorphism gives rise to $\Cu$-morphism whose distance with $\alpha$ is bounded by $\frac{1}{2^n}$. 

\begin{thm}
\label{thm:existencefd}
Let $B$ be a finite-dimensional $\CatCa$-algebra. Let $\alpha\colon\Lsc(\T,\overline{\N})\longrightarrow \Cu(B)$ be a $\Cu$-morphism such that $\alpha(1_\T)=[1_B]$.

For any $n\in\N$, there exists a unitary element $u_n$ in $B$ such that $ \alpha\underset{\Lambda_n}\simeq \Cu(\varphi_{u_n})$.
\end{thm}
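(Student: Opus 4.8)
The plan is to work concretely with the structure of a finite-dimensional $\CatCa$-algebra $B\cong \bigoplus_{j=1}^{m} M_{r_j}(\C)$ and build the unitary $u_n$ blockwise. Fix $n\in\N$ and recall the equidistant partition $(x_k)_0^{2^n}$ of $\T$, the open arcs $U_k=]x_{k-1};x_k[$, and the class $\Lambda_n$ of indicator functions of sets that are unions of these arcs (together with their endpoints, so as to stay lower semicontinuous). Since $B$ is finite dimensional, $\Cu(B)\cong \overline{\N}^{m}$ and a $\Cu$-morphism $\alpha$ is completely determined by, and recoverable from, the values $\alpha(1_{\overline{U_k}})=(a_k^{(1)},\dots,a_k^{(m)})\in\overline{\N}^m$; moreover $\alpha(1_\T)=[1_B]$ forces, in each coordinate $j$, that the $a_k^{(j)}$ are not all large — in fact one should think of $\alpha(1_{\overline U_k})$ as recording ``how many eigenvalues of $u_n$ lie in $\overline{U_k}$'' in each summand. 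The first step is to translate the hypothesis $\alpha(1_\T)=[1_B]$ and the monoid/order/compact-containment compatibility of $\alpha$ into a combinatorial statement: for each $j$, there is an assignment of $r_j$ points of $\T$ to the partition so that the induced counting function matches $\alpha$ on $\Lambda_n$ up to the slack inherent in the way-below relation.

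The key construction step is then the following. Work in a single summand $M_r(\C)$ (the general case being a direct sum of these). For each arc $U_k$ pick an interior point $t_k\in U_k$. Using the values $\alpha(1_{\overline{U_{k_1}\cup\cdots\cup U_{k_\ell}}})$ for intervals of consecutive arcs, extract nonnegative integers $n_k$ — the ``multiplicity to be placed near $x_k$ or inside $U_k$'' — summing to $r$; the compatibility axioms of a $\Cu$-morphism guarantee these integers can be chosen so that for every sub-union $V$ of arcs, $\alpha(1_V)\le \sum_{t_k\in V} n_k \le \alpha(1_{V_{1/2^n}})$, where $V_{1/2^n}$ is the $\tfrac1{2^n}$-neighborhood (which, since the arcs have length $2^{-n}$, is $V$ enlarged by one arc on each side). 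Define $u_n\in M_r(\C)$ to be the diagonal unitary with eigenvalue $t_k$ repeated $n_k$ times (filling the $r\times r$ diagonal, in any order). Then $\varphi_{u_n}\colon\CC(\T)\to M_r(\C)$ sends $f\mapsto f(u_n)=\diag(f(t_1),\dots)$, and $\Cu(\varphi_{u_n})(1_V)=\#\{i : (u_n)_{ii}\in V\}=\sum_{t_k\in V} n_k$. Assembling over $j$ gives $u_n\in B$ with $\alpha\underset{\Lambda_n}{\simeq}\Cu(\varphi_{u_n})$: for $g\ll h$ in $\Lsc(\T,\overline\N)$ with $g,h\in\Lambda_n$, one has $h=1_V$ for a union of closed arcs and $g\le 1_W$ with $W$ compactly contained in $V$, i.e. $W_{1/2^n}\subseteq V$; then $\alpha(g)\le\alpha(1_{W})\le \sum_{t_k\in W}n_k=\Cu(\varphi_{u_n})(1_W)\le\Cu(\varphi_{u_n})(1_V)=\Cu(\varphi_{u_n})(h)$, and symmetrically $\Cu(\varphi_{u_n})(g)\le\alpha(h)$.

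I expect the main obstacle to be the extraction of the integers $n_k$, i.e. verifying that the finitely many inequalities ``$\alpha(1_V)\le\sum_{t_k\in V}n_k\le\alpha(1_{V_{1/2^n}})$, one per union $V$ of consecutive arcs'' can be simultaneously satisfied by a single assignment with $\sum_k n_k=r$. This is a Hall-type / flow consistency question on the cyclic order of the $2^n$ arcs, and the reason it works is that $\alpha$, being a $\Cu$-morphism, is order-preserving and respects the compact-containment relation, so the ``demands'' $\alpha(1_{\overline{U_k}})$ and the ``capacities'' $\alpha(1_{U_{k-1}\cup U_k\cup U_{k+1}})$ are consistent around the circle; I would phrase this either as a direct greedy assignment going once around $\T$, starting from an arc where $\alpha(1_{\overline{U_k}})$ is minimal, or via a small max-flow/Hall argument on a bipartite graph of arcs versus enlarged arcs. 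The care needed is purely at the boundary points $x_k$ (shared by two arcs) and in the $\overline\N$-valued (possibly infinite) coordinates, where one replaces ``$=r$'' by the appropriate statement that $\alpha(1_\T)=[1_B]$ has finite value $r_j$ in summand $j$, so all the $a^{(j)}_k$ are in fact finite and bounded by $r_j$; once the integers exist, everything else is the routine bookkeeping above.
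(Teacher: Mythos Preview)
Your overall architecture (reduce to a single matrix block, build a diagonal unitary from multiplicities placed on the partition) matches the paper, but the crucial step you flag as ``the main obstacle'' --- extracting the integers $n_k$ satisfying the two-sided inequalities --- is precisely the step you do not carry out, and your sketch via a Hall/max-flow/greedy argument is not a proof. A minor but related confusion: you repeatedly write $\alpha(1_{\overline{U_k}})$, but $1_{\overline{U_k}}$ is not lower semicontinuous, so it is not an element of the domain; the relevant open sets in $\Lambda_n$ are unions of arcs $U_k$ together with (some of) their shared endpoints.

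The paper avoids your combinatorial obstacle entirely by a direct construction that places eigenvalues at \emph{two} kinds of locations: the arc centers $c_k$ with multiplicity $q_k:=\alpha(1_{U_k})$, and the boundary points $x_k$ with multiplicity $r_k:=\alpha(1_{V_k})-\alpha(1_{U_k})-\alpha(1_{U_{k+1}})\ge 0$, where $V_k:=U_k\cup\{x_k\}\cup U_{k+1}$. With these explicit formulas there is nothing to solve; one then checks not merely comparison but actual \emph{equality} on $\Lambda_n$, using the identity $1_W+\sum_{k=l+1}^{r-1}1_{U_k}=\sum_{k=l}^{r-1}1_{V_k}$ for any connected union $W$ of consecutive arcs, together with cancellation in $\N$. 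Your choice to place eigenvalues only at interior points $t_k\in U_k$ forces $\beta(1_{V_k})=\beta(1_{U_k})+\beta(1_{U_{k+1}})$, which $\alpha$ need not satisfy (this is exactly when $r_k>0$), so equality is out of reach for your construction and you are pushed into the inequality-matching problem you left open. The missing idea is thus to allow mass at the endpoints $x_k$ with the explicit multiplicity $r_k$; once you do that, the Hall-type detour disappears.
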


\begin{proof}
Let $n\in\N$. We first show the result assuming that $B:=M_d(\C)$ for some $d\in\N$ and the general case will follow. Recall that $\Cu(M_d(\C))\simeq \N$ and we will use this identification throughout. Also, we denote $\{x_k\}_0^{2^n}$, with $x_0=x_{2^n}$, the equidistant partition of $\T$ that induces $\Lambda_n$. For each $1\leq k\leq 2^n$, we define $c_k$ to be the center of the open arc $U_k$ and $V_k:=\Int(\overline{U_k}\cup\overline{U_{k+1}})$. Equivalently, $V_k:=U_k\cup\{x_k\}\cup U_{k+1}$. (With convention $U_{2^{n+1}}=U_1$.)

We will recursively build a unitary element of $B$ that has the required properties. Let $u$ be an empty matrix. Note that $\alpha(1_W)\in\N$ for any open set $W\subseteq \T$. For any $1\leq k\leq 2^n$, we write 
\[q_k:= \alpha(1_{U_k}) \hspace{1cm} r_k:=\alpha(1_{V_k})-(\alpha(1_{U_k})+\alpha(1_{U_{k+1}}))
\]
and we \textquoteleft fill-up\textquoteright\ $u$ as follows:

(1) $u=u\oplus\diag_{q_k}(c_k)$. 

(2) If $r_k>0$, then $u=u\oplus\diag_{q_k}(x_k)$. (Observe that $r_k\geq 0$.)\\
In other words, we have 
\[
u= \begin{psmallmatrix}
\ddots \\
&\diag_{q_k}(c_k)\\
&&\diag_{r_k}(x_k)\\
&&&\ddots
\end{psmallmatrix}.
\]
The matrix $u$ has entries in $\T$ and hence $u$ is a unitary element of some matrix algebra $M_{d'}(\C)$. Let us write $\beta:=\Cu(\varphi_u)$. We aim to show that $d=d'$ and that $\alpha\underset{\Lambda_n}\simeq\beta$. 

To do so, we first deduce from the construction of $u$ that $\alpha(1_{U_k})=\beta(1_{U_k})$ and $\alpha(1_{V_k})=\beta(1_{V_k})$ for any $1\leq k\leq 2^n$. 
Now let $W$ be an open connected arc of the form $W:=U_{l}\cup(\bigcup\limits_{k=l+1}^{r-1}\overline{U_{k}})\cup U_{r}$.
We have \[1_W+(\sum_{k=l+1}^{r-1}1_{U_{k}})=\sum_{k=l}^{r-1}1_{V_k} \text{ in }\Lsc(\T,\overline{\N}).\]
Applying the $\Cu$-morphisms $\alpha$ and $\beta$, we get the following equalities in $\N$
\[
\left\{
\begin{array}{ll}
\alpha(1_{W})+\sum_{l+1}^{r-1}\alpha(1_{U_k})=\sum_{l}^{r-1}\alpha(1_{V_k}).\\
\beta(1_{W})+\sum_{l+1}^{r-1}\beta(1_{U_k})=\sum_{l}^{r-1}\beta(1_{V_k}).
\end{array}
\right.
\] 
Since $\alpha$ and $\beta$ agree on each $V_k$, we deduce that  
\[
\alpha(1_{W})+\sum_{l+1}^{r-1}\alpha(1_{U_{k}})=\beta(1_{W})+\sum_{l+1}^{r-1}\beta(1_{U_{k}})\text{ in } \N.
\]
Finally, using cancellation in $\N$ and the fact that $\alpha$ and $\beta$ also agree on each $U_k$,  we conclude that $\alpha(1_W)=\beta(1_W)$. In other words, $\alpha$ and $\beta$ agree on each $g\in\Lambda_n$ with connected support. 
Therefore, we deduce that $d'=d=[1_B]$ and that $\alpha$ and $\beta$ agree on $\Lambda_n$.

The general case now follows from the observation that any $\Cu$-morphism $\alpha\colon\Lsc(\T,\overline{\N})\longrightarrow \Cu(\bigoplus_1^n M_{d_k}(\C))$ with $\alpha(1_\T)=[1_B]$ can be decomposed into a direct sum of $\Cu$-morphisms \break$\alpha_k\colon\Lsc(\T,\overline{\N})\longrightarrow \Cu(M_{d_k}(\C))$ with $\alpha_k(1_\T)=[1_{M_{d_k}(\C)}]$, that is, $\alpha=(\alpha_k)_1^n$.
\end{proof}

\begin{cor}
\label{cor:existencefd}
Let $B$ be a finite-dimensional $\CatCa$-algebra. Let $\alpha:\Lsc(\T,\overline{\N})\longrightarrow \Cu(B)$ be a $\Cu$-morphism such that $\alpha(1_\T)=[1_B]$.
Then there exists a unitary element $u$ in $B$, unique up to approximate unitary equivalence, such that $ \alpha= \Cu(\varphi_{u})$. 
\end{cor}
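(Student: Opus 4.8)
The plan is to assemble the approximate lifts furnished by \autoref{thm:existencefd} into a single exact one, exploiting that in finite dimensions the unitary group is compact, and then to settle uniqueness by an elementary spectral computation. In the terminology of the introduction, \autoref{thm:existencefd} produces a Cauchy sequence of lifts, and the extra ingredient that upgrades it to an honest lift is precisely the (easy) uniqueness in the finite-dimensional case.

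For existence, I would first invoke \autoref{thm:existencefd} to get, for each $n\in\N$, a unitary $u_n\in\mathcal{U}(B)$ with $\alpha\underset{\Lambda_n}{\simeq}\Cu(\varphi_{u_n})$; by \autoref{prg:discretedistance} this gives $dd_{\Cu}(\alpha,\Cu(\varphi_{u_n}))\leq \frac{1}{2^n}$. Since $B$ is finite-dimensional, $\mathcal{U}(B)$ is compact, so after passing to a subsequence I may assume $u_n\to u$ for some $u\in\mathcal{U}(B)$. The map $v\mapsto\Cu(\varphi_v)$ is continuous from $\mathcal{U}(B)$ into $\Hom_{\Cu}(\Lsc(\T,\overline{\N}),\Cu(B))$ equipped with $d_{\Cu}$: concretely, $\Cu(\varphi_v)(1_W)$ is the number of eigenvalues of $v$ in $W$ counted with multiplicity, and these counts vary continuously with $v$ by standard eigenvalue-perturbation bounds for unitary matrices. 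Hence $dd_{\Cu}(\Cu(\varphi_{u_n}),\Cu(\varphi_u))\to 0$, and combining this with $dd_{\Cu}(\alpha,\Cu(\varphi_{u_n}))\leq\frac{1}{2^n}$ via the 2-relaxed triangle inequality for $dd_{\Cu}$ yields $dd_{\Cu}(\alpha,\Cu(\varphi_u))=0$. As $d_{\Cu}\leq 2\,dd_{\Cu}$ and $d_{\Cu}$ is an honest metric, this forces $\alpha=\Cu(\varphi_u)$, i.e. $u$ is the desired lift.

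For uniqueness, suppose $u,v\in\mathcal{U}(B)$ satisfy $\Cu(\varphi_u)=\Cu(\varphi_v)$. Writing $B=\bigoplus_{i=1}^m M_{d_i}(\C)$, the equality splits blockwise, and in block $i$, taking open arcs $W$ that shrink to a point $z\in\T$, the value $\Cu(\varphi_{u_i})(1_W)$ decreases to the multiplicity of $z$ as an eigenvalue of $u_i$; so $u_i$ and $v_i$ have identical eigenvalue multisets and are conjugate by a unitary $w_i\in M_{d_i}(\C)$. Then $w=(w_i)_i\in\mathcal{U}(B)$ conjugates $u$ to $v$, whence a fortiori $u\sim_{aue}v$. (Alternatively one can cite the uniqueness result of \cite{C22}, which covers $\AF$ algebras, in particular finite-dimensional ones.)

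I expect no serious obstacle here: all the real work sits in \autoref{thm:existencefd}, and this corollary is essentially its packaging with finite-dimensional uniqueness. The single point requiring attention is that $dd_{\Cu}$ is only a semimetric, so one cannot read off $\alpha=\Cu(\varphi_u)$ directly from $dd_{\Cu}(\alpha,\Cu(\varphi_u))=0$ without first passing through its comparison with the genuine metric $d_{\Cu}$, which is a routine step but one worth making explicit.
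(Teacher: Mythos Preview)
Your argument is correct, and it takes a genuinely different route from the paper for existence. The paper does not exploit compactness of $\mathcal{U}(B)$; instead it applies the quantitative uniqueness theorem \cite[Theorem 5.11]{C22} to the sequence $(u_n)_n$ to deduce that it is Cauchy with respect to $d_U$, and then invokes completeness of $\mathcal{U}(B)$ for $d_U$ to produce the limit. Your compactness-plus-subsequence argument is more elementary and self-contained in the finite-dimensional setting, since it sidesteps the appeal to \cite{C22}. The paper's approach, by contrast, is the same template reused verbatim in the $\AF$ case (\autoref{thm:classificationAF}), where compactness is unavailable but the uniqueness-to-Cauchy mechanism still works; so its virtue is uniformity rather than simplicity.

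One phrasing issue worth tightening: the individual eigenvalue counts $v\mapsto \Cu(\varphi_v)(1_W)$ are \emph{not} continuous functions of $v$ (they jump when an eigenvalue crosses $\partial W$). What you actually need, and what eigenvalue perturbation gives, is that $\|u-v\|$ small forces the eigenvalue multisets to be close in Hausdorff distance, which is precisely the content of the inequality $dd_{\Cu}(\Cu(\varphi_u),\Cu(\varphi_v))\leq d_U(\varphi_u,\varphi_v)\leq \|u-v\|$ already recorded in the preliminaries. Citing that inequality directly would make the continuity step airtight. Your handling of the semimetric issue ($dd_{\Cu}=0$ via $d_{\Cu}\leq 2\,dd_{\Cu}$) and your elementary spectral uniqueness argument are both fine; the paper simply defers uniqueness to \cite{C22} without spelling it out.
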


\begin{proof}
We only have to prove the existence part of the statement. Let $n\in\N$. We know that there exists a unitary element $u_n$ in $B$ such that $\beta_n:=\Cu(\varphi_{u_n})\underset{\Lambda_{n}}\simeq \alpha$.  We obtain a sequence $(u_n)_n$ of unitary elements in $B$ and for any $n,m\in\N$ with $n\leq m$, we compute that
\begin{align*}
dd_{\Cu}(\beta_n,\beta_m)&\leq 2 (dd_{\Cu}(\beta_n,\alpha)+dd_{\Cu}(\alpha,\beta_m))\\
&\leq 2(\frac{1}{2^{n}}+\frac{1}{2^{m}})\\
&\leq \frac{1}{2^{n-2}}.
\end{align*}
By the uniqueness result obtained in \cite[Theorem 5.11]{C22}, we deduce that there exists a unitary element $w_{n,m}$ such that $\Vert w_{n,m}u_n w_{n,m}^* - u_m\Vert\leq \frac{1}{2^{n-2}}$. We deduce that  $d_U(u_n,u_m)\leq \frac{1}{2^{n-2}}$, i.e. the sequence $(u_n)_n$ is Cauchy with respect to the metric $d_U$. 
On the other hand, it is well-known (at least, not hard to prove) that the set of unitary elements of a unital $\CatCa$-algebra is complete with respect to $d_U$. Therefore, the sequence $(u_n)_n$ converges towards a unitary element $u\in B$. Let us write $\beta:=\Cu(\varphi_u)$. We have 
\vspace{-0,4cm}\begin{align*}
dd_{\Cu}(\alpha,\beta)&\leq 2 (dd_{\Cu}(\alpha,\beta_n)+dd_{\Cu}(\beta_n,\beta))\\
&\leq  2 dd_{\Cu}(\alpha,\beta_n)+ 2 d_U(u_n,u)\\
&\leq \frac{1}{2^{n-1}}+ 2 d_U(u_n,u)
\end{align*}
and we easily see that the right side of the inequality tends to zero as $n$ goes to $\infty$, which shows that $dd_{\Cu}(\alpha,\beta)=0$, or equivalently, that $\alpha=\beta$.
\end{proof}

$\hspace{-0,34cm}\bullet\,\,\textbf{The general case}.$ Before diving into the completion of the classification of unitary elements of unital $\AF$-algebras, we take the opportunity to slightly improve of the uniqueness result obtained in \cite[Corollary 5.12]{C22} as follows.

\begin{thm}
\label{thm:uniquenessAF}
Let $A$ be a unital $\AF$-algebra. There exists a constant $C>0$ such that for any unital *-homomorphisms $\phi,\psi\colon \mathcal{C}(\T)\longrightarrow A$, we have that $ d_{U}(\phi,\psi) \leq C d_{\Cu}(\Cu(\phi),\Cu(\psi))$.
\end{thm}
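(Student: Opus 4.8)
The plan is to reduce the estimate, by an approximate-intertwining / $\varepsilon$-$\delta$ argument, to the finite-dimensional case already handled in the uniqueness work of \cite{C22}, and then to obtain the \emph{uniform} constant $C$ from the fact that in each matrix block the relevant distance can be computed combinatorially. First I would fix an increasing sequence $(B_m)_m$ of finite-dimensional subalgebras of $A$ with $\overline{\bigcup_m B_m}=A$ and $1_A\in B_m$. Given unital $\phi,\psi\colon\CC(\T)\to A$, put $r:=d_{\Cu}(\Cu(\phi),\Cu(\psi))$; we may assume $r<\tfrac12$ (say), since for $r$ bounded below the inequality holds trivially with a large constant because $d_U(\phi,\psi)\le 2$ always. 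Using semiprojectivity of $\CC(\T)$ together with a small perturbation, I would find $m$ large and unital $*$-homomorphisms $\phi',\psi'\colon\CC(\T)\to B_m$ with $\|\phi'(f)-\phi(f)\|$ and $\|\psi'(f)-\psi(f)\|$ as small as desired for $f$ in a Lipschitz generating set, and with $d_{\Cu}(\Cu(\phi'),\Cu(\psi'))$ close to $r$ (lower semicontinuity of the metric under the relevant perturbations, plus the inequality $dd_{\Cu}\le d_{\Cu}\le 2dd_{\Cu}$ and the bound $d_{\Cu}(\Cu(\varphi_u),\Cu(\varphi_v))\le d_U(\varphi_u,\varphi_v)$ recalled in the preliminaries).

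The heart of the argument is then the finite-dimensional Lipschitz estimate: there is an \emph{absolute} constant $C_0$ such that for any $d$ and any two unitaries $u,v\in M_d(\C)$ one has $d_U(\varphi_u,\varphi_v)\le C_0\, d_{\Cu}(\Cu(\varphi_u),\Cu(\varphi_v))$. This should follow from the construction underlying \autoref{thm:existencefd} and \autoref{cor:existencefd}: the $\Cu$-class of $\varphi_u$ is the list of eigenvalue multiplicities of $u$ as a measure on $\T$, and $d_{\Cu}$ between two such data is (up to the factor $2$ between $d_{\Cu}$ and $dd_{\Cu}$) the smallest $\tfrac1{2^n}$ for which the two multiplicity measures agree on all $\Lambda_n$-arcs. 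If that common refinement is at scale $\rho$, one can match eigenvalues of $u$ with eigenvalues of $v$ moving each by at most a fixed multiple of $\rho$ along $\T$, and a diagonal unitary implementing the permutation of eigenvectors together with rotation by $\le C_0\rho$ in each coordinate gives a unitary $w$ with $\|wuw^*-v\|\le C_0\rho$; taking Lipschitz functions only costs the (fixed) Lipschitz constant, so $d_U(\varphi_u,\varphi_v)\le C_0\,\rho\le C\,d_{\Cu}$. Passing through $\bigoplus M_{d_k}$ is harmless: the decomposition $\alpha=(\alpha_k)$ from the proof of \autoref{thm:existencefd} lets one run the matching block by block, and $d_U$, $d_{\Cu}$ both behave as a supremum over blocks, so the same $C$ works.

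Finally I would assemble the pieces: $d_U(\phi,\psi)\le d_U(\phi,\phi')+d_U(\phi',\psi')+d_U(\psi',\psi)$, the outer terms are $<\varepsilon$ by construction, the middle term is $\le C\,d_{\Cu}(\Cu(\phi'),\Cu(\psi'))\le C(r+\varepsilon)$ by the finite-dimensional estimate applied inside $B_m$, and letting $\varepsilon\to0$ gives $d_U(\phi,\psi)\le C\,r$, with $C$ depending only on the Lipschitz constant of the generator of $\CC(\T)$ and on $C_0$ — in particular not on $A$.

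I expect the main obstacle to be the finite-dimensional eigenvalue-matching step, specifically showing that the bound on how far eigenvalues must travel is \emph{linear} in the refinement scale $\rho$ with a constant independent of $d$; one has to be careful near the ``wrap-around'' of $\T$ and with eigenvalues of high multiplicity, and to check that the combinatorial quantity appearing in $dd_{\Cu}$ really does control the optimal transport distance between the two multiplicity measures on $\T$. A secondary technical point is making the perturbation $\phi\rightsquigarrow\phi'$ compatible with the $\Cu$-metric estimate, i.e.\ ensuring that pushing $\phi$ into $B_m$ does not increase $d_{\Cu}$ of the $\Cu$-invariants by more than $\varepsilon$; here semiprojectivity of $\CC(\T)$ and continuity of $\Cu$ on the relevant approximately-commuting data do the job, but it must be stated with the right uniformity.
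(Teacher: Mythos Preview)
Your approach is essentially the one the paper takes (by deferral to \cite[Theorem 5.11 and Corollary 5.12]{C22}): a finite-dimensional Lipschitz estimate via eigenvalue matching, then passage to the limit via semiprojectivity. The eigenvalue-matching step you sketch is exactly right in spirit; in \cite{C22} it is carried out with Hall's marriage theorem (cf.\ Step~2 of \autoref{thm:existencefx} here), which cleanly handles the wrap-around and multiplicity issues you worry about and gives the linear bound with an absolute constant.

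Two points deserve sharpening. First, you fix a \emph{sequence} $(B_m)_m$, which presupposes separability; the paper's only contribution beyond \cite{C22} is precisely to drop this, by running the factorization argument over an arbitrary directed system via the characterization of $\Cu$-limits in \cite{TV22}. Second, your ``secondary technical point'' is in fact the main one in the passage to the limit, and your proposed fix is not quite the right mechanism. The issue is not that perturbing $\phi$ to $\phi'$ might increase $d_{\Cu}$ in $\Cu(A)$; it is that the finite-dimensional estimate must be applied to $\Cu(\phi'),\Cu(\psi')$ with codomain $\Cu(B_m)$, whereas you only control their images in $\Cu(A)$, and $\Cu(\iota_m)$ need not be order-reflecting. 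What is needed is a $\Cu$-level factorization lemma (this is \cite[Lemma 5.7]{C22}): if two $\Cu$-morphisms into $\varinjlim S_i$ compare on a finite set $\Lambda_n$, then after passing to a possibly larger index $m'$ their factorizations through $S_{m'}$ already compare on $\Lambda_n$. This is a statement about the inductive-limit structure of $\Cu$, not about continuity of $\Cu$ under norm perturbations, and it is what allows you to transport the bound $d_{\Cu}\le r$ from $\Cu(A)$ back to some $\Cu(B_{m'})$.
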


\begin{proof}
In the separable setting, this is essentially contained in the proof of \cite[Corollary 5.12]{C22}. For the general setting, it is enough to improve \cite[Lemma 5.7]{C22} to any inductive system in the category $\Cu$. (Instead of inductive systems indexed over $\N$.) This is readily done via the characterization of inductive limits in the category $\Cu$ exposed for instance in \cite[Lemma 3.8]{TV22}.
\end{proof}

The next step is to pass existence results that we have obtained in the finite dimensional case to inductive limits. One could hope that a similar method as done for uniqueness in \cite{C22} via a factorization result analogous to \cite[Lemma 5.7]{C22} would be satisfactory. Nevertheless, it may not be true in general, that abstract $\Cu$-morphisms $\alpha\colon\Lsc(\T,\overline{\N})\longrightarrow \lim\limits_{\longrightarrow} S_i$ can be approximated by a factorization through a finite stage. (Whereas for concrete $\Cu$-morphisms, this was made possible by the fact that unitary elements of $\lim\limits_{\longrightarrow} A_i$ can be approximated by unitary elements of a finite stage, or equivalently, that $\CC(\T)$ is semiprojective.) 

As a consequence, it is relevant to define a \emph{weak semiprojectivity property} for $\Cu$-morphisms analogous to that of weak semiprojectivity for *-homomorphisms. Let us mention that we have been inspired by the $\CatCa$-version exposed in \cite[Definition 2.1 - Remark 2.6]{T18}.

\begin{dfn}\label{dfn:wsp}
Let $S,T$ be $\Cu$-semigroups. We say that a $\Cu$-morphism $\alpha\colon S\longrightarrow T$ is \emph{weakly semiprojective} if, for any inductive system $(V_i,\nu_{ij})$ in $\Cu$ and any $\Cu$-morphism $\tau\colon T\longrightarrow \lim\limits_{\longrightarrow}(V_i,\nu_{ij})$ and any finite set $F\subseteq S$, there exists a finite index $i_F$ and a $\Cu$-morphism $\alpha_F\colon S\longrightarrow V_{i_F}$ such that $\nu_{i_F\infty}\circ \alpha_F\simeq_F \tau\circ \alpha$. 

In other words, the following diagram approximately commutes within $F$
\[
  \xymatrix@R=5mm{
     & && V_{i_F}\ar[d]^{\nu_{i_F\infty}} \\
S\ar@{-->}@/^{1pc}/[urrr]^{\alpha_F}\ar[r]_{\alpha}  & T\ar[rr]_{\tau} &\ar@{}[u]_{\simeq_F} &\lim\limits_{\longrightarrow}(V_i,\nu_{ij})
   } 
   \]
\end{dfn}

We finally combine this property  with \autoref{thm:existencefd} to obtain the following theorem.

\begin{thm}
\label{thm:classificationAF}
Let $A$ be a unital $\AF$-algebra. Let $\alpha\colon\Lsc(\T,\overline{\N})\longrightarrow \Cu(A)$ be a weakly semiprojective $\Cu$-morphism such that $\alpha(1_\T)=[1_A]$. 

Then there exists a unitary element $u$ in $A$, unique up to approximate unitary equivalence, such that $ \alpha= \Cu(\varphi_u)$. 
\end{thm}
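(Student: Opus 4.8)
The strategy is to write $A = \varinjlim(A_i, \iota_{ij})$ as an inductive limit of finite-dimensional $\CatCa$-algebras, apply the functor $\Cu$ to obtain $\Cu(A) = \varinjlim(\Cu(A_i), \Cu(\iota_{ij}))$, and then use weak semiprojectivity of $\alpha$ to factor it (approximately, on larger and larger finite subsets) through a finite stage $\Cu(A_i)$; at each such stage we invoke \autoref{cor:existencefd} to realise the factored $\Cu$-morphism by an actual unitary element of $A_i$, push it forward into $A$, and show the resulting sequence of concrete $\Cu$-morphisms is Cauchy for $dd_{\Cu}$ and converges to $\alpha$. Uniqueness is immediate from \autoref{thm:uniquenessAF}, so the content is entirely in existence.

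\textbf{Step 1: Setup and the factorization.} Fix an increasing exhaustion $F_1 \subseteq F_2 \subseteq \cdots$ of $\Lsc(\T,\overline{\N})$ by finite sets; concretely I would take $F_n$ to contain all of $\Lambda_n$ together with the finitely many indicator maps $1_{V_k}$, $1_{U_k}$ of the arcs appearing in the proof of \autoref{thm:existencefd} for the partition of size $2^{-n}$, as well as $1_\T$. Apply \autoref{dfn:wsp} with $T = \Cu(A)$, the inductive system $(\Cu(A_i), \Cu(\iota_{ij}))$, the canonical map $\tau = \id_{\Cu(A)}$, and the finite set $F_n$: this yields an index $i_n$ and a $\Cu$-morphism $\alpha_n \colon \Lsc(\T,\overline{\N}) \longrightarrow \Cu(A_{i_n})$ with $\Cu(\iota_{i_n\infty}) \circ \alpha_n \simeq_{F_n} \alpha$. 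We may assume $i_n$ is increasing in $n$. There is a small bookkeeping point here: $\alpha_n$ need not satisfy $\alpha_n(1_\T) = [1_{A_{i_n}}]$ on the nose, only $\Cu(\iota_{i_n\infty})(\alpha_n(1_\T)) \leq [1_A]$ and $[1_A] \leq \Cu(\iota_{i_n\infty})(\alpha_n(1_\T))$ up to the compact-containment slack built into $\simeq_{F_n}$; since $[1_A]$ is compact and $\Cu(A_{i_n}) \to \Cu(A)$ is an order-embedding on compacts for $\AF$-algebras (the compact elements are $\N$-combinations of classes of projections, and projections lift along $\iota$ up to stabilization), by increasing $i_n$ we may arrange $\alpha_n(1_\T) = [1_{A_{i_n}}]$ exactly. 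This normalization is the one genuinely fiddly bit and should be isolated as a short lemma or remark.

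\textbf{Step 2: Realise each stage by a unitary and compare.} Since $A_{i_n}$ is finite-dimensional and $\alpha_n(1_\T) = [1_{A_{i_n}}]$, \autoref{cor:existencefd} gives a unitary $v_n \in A_{i_n}$ with $\Cu(\varphi_{v_n}) = \alpha_n$. Let $u_n := \iota_{i_n\infty}(v_n) \in A$, a unitary element, and set $\beta_n := \Cu(\varphi_{u_n}) = \Cu(\iota_{i_n\infty}) \circ \alpha_n$. By construction $\beta_n \simeq_{F_n} \alpha$; since $\Lambda_n \subseteq F_n$ this gives $\alpha \mathbin{\underset{\Lambda_n}{\simeq}} \beta_n$, hence $dd_{\Cu}(\alpha, \beta_n) \leq 2^{-n}$. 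Exactly as in the proof of \autoref{cor:existencefd}, for $n \leq m$ the $2$-relaxed triangle inequality gives $dd_{\Cu}(\beta_n, \beta_m) \leq 2(2^{-n} + 2^{-m}) \leq 2^{-(n-2)}$, so $(\beta_n)_n$ is Cauchy for $dd_{\Cu}$ and converges to $\alpha$. This already proves the "Cauchy sequence" version of the statement; to upgrade to an actual lift I would now, as in \autoref{cor:existencefd}, use \autoref{thm:uniquenessAF}: from $dd_{\Cu}(\beta_n,\beta_m) \leq 2^{-(n-2)}$ and $d_{\Cu} \leq 2\, dd_{\Cu}$ one gets $d_U(\varphi_{u_n}, \varphi_{u_m}) \leq 2C \cdot 2^{-(n-2)}$, so $(u_n)_n$ is $d_U$-Cauchy; by completeness of the unitary group for $d_U$ it converges to a unitary $u \in A$, and continuity of $\Cu(\varphi_{(\cdot)})$ with respect to $d_U$ (the inequality $dd_{\Cu}(\Cu(\varphi_u),\Cu(\varphi_v)) \leq d_U(\varphi_u,\varphi_v)$ recorded in the preliminaries) forces $\Cu(\varphi_u) = \alpha$.

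\textbf{Step 3: Uniqueness.} If $u, u'$ are two unitaries with $\Cu(\varphi_u) = \Cu(\varphi_{u'}) = \alpha$, then $d_{\Cu}(\Cu(\varphi_u),\Cu(\varphi_{u'})) = 0$, so \autoref{thm:uniquenessAF} gives $d_U(\varphi_u,\varphi_{u'}) = 0$, which means $u \sim_{aue} u'$ (a sequence of unitaries witnessing $d_U = 0$ is precisely an approximate unitary equivalence after passing to Lipschitz functions, which are dense in $\CC(\T)$). The consequence for the functor $\Cu\colon \mathcal{T}\!\star\AF \to \Cu_{\wsp}$ then follows by combining existence and uniqueness over all unital $\AF$-algebras.

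\textbf{Main obstacle.} The delicate point is Step 1, and specifically two things packaged inside the word "factor": first, that weak semiprojectivity of $\alpha$ is actually available — in the full theorem $\alpha$ is \emph{assumed} weakly semiprojective, so this is free, but one must be careful that \autoref{dfn:wsp} is applied to the correct inductive system (the one coming from an $\AF$ decomposition of $A$) and with $\tau = \id$; second, the normalization $\alpha_n(1_\T) = [1_{A_{i_n}}]$, which requires knowing that compact elements (in particular $[1_A]$) are detected at a finite stage and that the unit of $A$ is, up to the equivalence relation and after increasing the index, the image of a full projection in $A_{i_n}$ — true for $\AF$-algebras but worth a sentence. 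Everything downstream is a verbatim repeat of the finite-dimensional argument in \autoref{cor:existencefd} together with the already-proved uniqueness \autoref{thm:uniquenessAF}.
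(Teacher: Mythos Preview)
Your proof is correct and follows essentially the same route as the paper: factor $\alpha$ approximately through a finite stage via weak semiprojectivity, lift there to a unitary, push forward, and use \autoref{thm:uniquenessAF} to turn the $dd_{\Cu}$-Cauchy sequence of unitaries into a $d_U$-Cauchy one. The only minor differences are that the paper invokes \autoref{thm:existencefd} (the approximate lift, which already \emph{agrees} on $\Lambda_n$ by construction) rather than the exact \autoref{cor:existencefd}, and that the paper silently passes over the normalization $\alpha_n(1_\T)=[1_{A_{i_n}}]$ which you rightly flag---your argument for it (compactness of $1_\T$, the comparison on $\Lambda_n$ forcing $\Cu(\iota_{i_n\infty})(\alpha_n(1_\T))=[1_A]$, then increasing $i_n$ so the two compact preimages of $[1_A]$ coincide) is exactly what is needed.
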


\begin{proof}
We only have to prove the existence part of the statement. We know that there is an inductive system of finite dimensional $\CatCa$-algebras such that $(A,\phi_{i\infty})_i:=\lim\limits_{\longrightarrow}(A_i,\phi_{ij})$. We conveniently write $S_i:=\Cu(A_i)$, $\sigma_{ij}:=\Cu(\phi_{ij})$ and $\sigma_{i\infty}:=\Cu(\phi_{i\infty})$. Let $n\in\N$. By the weak semiprojectivity of $\alpha$, we can find $\alpha_n\colon \Lsc(\T,\overline{\N})\longrightarrow S_{i_n}$ such that $\sigma_{i_n\infty}\circ\alpha_n\underset{\Lambda_n}\simeq \alpha$. By \autoref{thm:existencefd}, there exists a unitary element $u_n$ in $A_{i_n}$ such that $\Cu(\varphi_{u_n})\underset{\Lambda_n}\simeq \alpha_n$. Even more so, we know that $\Cu(\varphi_{u_n})$ and $\alpha_n$ agree on $\Lambda_n$. This implies that $\sigma_{i_n\infty}\circ\Cu(\varphi_{u_n})\underset{\Lambda_n}\simeq \alpha$. Write $v_n:=\phi_{i_n\infty}(u_n)$ and $\beta_n:=\Cu(\varphi_{v_n})$. Then $(v_n)_n$ is a sequence of unitary elements of $A$ and we have $dd_{\Cu}(\beta_n,\alpha)\leq \frac{1}{2^n}$. We now compute
\begin{align*}
dd_{\Cu}(\beta_n,\beta_m)&\leq 2 (dd_{\Cu}(\beta_n,\alpha)+dd_{\Cu}(\alpha,\beta_m))\\
&\leq 2(\frac{1}{2^n}+\frac{1}{2^m})\\
&\leq \frac{1}{2^{n-2}}.
\end{align*}
Arguing similarly as in the proof of \autoref{cor:existencefd}, it follows from \autoref{thm:classificationAF} that the sequence $(v_n)_n$ is Cauchy with respect to $d_U$ and hence, converges towards a unitary element $v\in A$. We also similarly compute that $dd_{\Cu}(\alpha,\Cu(\varphi_v))\leq 2 dd_{\Cu}(\alpha,\beta_n)+ 2 d_U(v_n,v)\underset{n\rightarrow \infty}\longrightarrow 0$ and we conclude that $\Cu(\varphi_v)=\alpha$.
\end{proof}

With these tools, we can now complete the classification of unitary elements of unital AF-algebras using a mildly reformulated Cuntz semigroup. This mild reformulation relates to an adjustment of the domain and the codomain of the functor $\Cu$, which enables us to obtain the desired classification. \\

\textbf{[The category $\mathcal{T}\!\star\AF$]} 
Let $\mathcal{T}$ be the subcategory of $\CatCa$ consisting of a single object $\mathcal{C}(\T)$ together with the identity morphism. Let $\AF$ denotes the full subcategory of $\CatCa$ consisting of all $\AF$-algebras with their *-homomorphisms. We now define the \emph{$\CatCa$-join of $\mathcal{T}$ and $\AF$}, denoted by $\mathcal{T}\!\star\AF$, to be the category whose objects are either objects of $\mathcal{T}$ or objects of $\AF$ and morphisms are 
\[
\Hom_{\mathcal{T}\!\star\AF}(A,B):=\left\{ 
\begin{array}{ll} 
\Hom_{\mathcal{T}}(A,B) \text{ if }A,B\in \mathcal{T}\\
\Hom_{\mathcal{\AF}}(A,B) \text{ if }A,B\in \AF\\
\Hom_{\mathcal{\CatCa}}(A,B) \text{ if }A\in \mathcal{T} \text{ and } B \in \AF\\
\Hom_{\mathcal{\CatCa}}(A,B) \text{ if }A\in \AF  \text{ and } B \in \mathcal{T}
\end{array}
\right.
\]

That $\mathcal{T}\!\star\AF$ is a well-defined category is readily checked. We mention that the $\CatCa$-join construction has been inspired by the existing notion of \emph{join of categories}. See e.g. \cite[\S1.2.8]{L09}.\\

\textbf{[The category $\Cu_{\wsp}$]} 
Let us consider the category $\Cu_{\wsp}$ consisting of all $\Cu$-semigroups together with weakly semiprojective $\Cu$-morphisms and the identity morphism on each object. 

That $\Cu_{\wsp}$ is a well-defined wide subcategory of $\Cu$ follows after observing that the composition of $\Cu$-morphisms in $\Cu_{\wsp}$ is either weakly semiprojective -when both are weakly semiprojective or one is weakly semiprojective and the other is an identity morphism- or an identity morphism -when both are identity morphisms-. 

\begin{prop}
\label{rmk:lll}
Let $A,B\in \mathcal{T}\!\star\AF$ and let $\phi\colon A\longrightarrow B\in \Hom_{\mathcal{T}\star\AF}(A,B)$. 

Then $\Cu(\phi)$ is weakly semiprojective. 
\end{prop}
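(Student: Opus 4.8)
The plan is to check the claim case by case according to which of the four types of hom-sets $\phi$ belongs to, and in each case to verify that $\Cu(\phi)$ satisfies the defining condition of weak semiprojectivity from \autoref{dfn:wsp}. The two cases where $\phi$ is an identity morphism (namely $A=B=\CC(\T)$, and more generally whenever $\phi=\id$) are immediate, since $\Cu(\id)=\id$ and the identity $\Cu$-morphism is trivially weakly semiprojective: given any inductive system $(V_i,\nu_{ij})$ and any $\tau\colon T\to\lim_{\longrightarrow}V_i$ and finite $F\subseteq S=T$, one uses that $\tau$ itself, composed with the factorisation of a finite subset through a finite stage coming from the description of $\Cu$-inductive limits (as in \cite[Lemma 3.8]{TV22}), produces the required $\alpha_F$. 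The case $A,B\in\AF$ with $\phi$ an arbitrary *-homomorphism reduces, after the same remark, to the fact established in the previous paragraph of the excerpt that $\Cu_{\wsp}$ is closed under composition, together with the observation that any $\Cu$-morphism between Cuntz semigroups of $\AF$-algebras is weakly semiprojective.

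The genuinely substantive case is $A=\CC(\T)$ and $B$ a unital $\AF$-algebra (the fourth case, $A\in\AF$ and $B=\CC(\T)$, is handled the same way with the roles of domain and codomain bookkeeping adjusted, or is vacuous if one only admits unital morphisms). Here $S=\Cu(\CC(\T))\simeq\Lsc(\T,\overline{\N})$ and $\Cu(\phi)\colon\Lsc(\T,\overline{\N})\to\Cu(B)$. Given an inductive system $(V_i,\nu_{ij})$ in $\Cu$, a $\Cu$-morphism $\tau\colon\Cu(B)\to\lim_{\longrightarrow}V_i$ and a finite $F\subseteq\Lsc(\T,\overline{\N})$, I would first enlarge $F$ to lie inside some $\Lambda_n$ (every element of $\Lsc(\T,\overline{\N})$ is a supremum of an increasing sequence of elements of $\bigcup_n\Lambda_n$, and for the finitely many way-below pairs relevant to the $\simeq_F$ relation only finitely much data is needed), so it suffices to produce $\Cu(\phi)_F$ on $\Lambda_n$. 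Next, write $B=\lim_{\longrightarrow}(B_k,\psi_{kl})$ as an inductive limit of finite-dimensional $\CatCa$-algebras; since $\CC(\T)$ is semiprojective, the unitary $u\in B$ with $\phi=\varphi_u$ can be approximated by $\psi_{k\infty}(u_k)$ for a unitary $u_k$ in some $B_k$, so that $\Cu(\varphi_{\psi_{k\infty}(u_k)})$ agrees with $\Cu(\phi)$ on $\Lambda_n$ for $k$ large (using that $dd_{\Cu}$ is controlled by $d_U$). Then $\tau\circ\Cu(\phi)\simeq_F\tau\circ\Cu(\psi_{k\infty})\circ\Cu(\varphi_{u_k})$, and it remains to factor $\tau\circ\Cu(\psi_{k\infty})\circ\Cu(\varphi_{u_k})\colon\Lsc(\T,\overline{\N})\to\lim_{\longrightarrow}V_i$ through a finite stage $V_{i_F}$ on the finite set $F$; this last step is exactly the finite-stage factorisation property for $\Cu$-inductive limits applied to the finitely many elements $\Cu(\varphi_{u_k})(g)$, $g\in F$, again via \cite[Lemma 3.8]{TV22}. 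Setting $\Cu(\phi)_F:=\alpha_F$ to be the resulting composite into $V_{i_F}$ gives the diagram that approximately commutes within $F$.

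The main obstacle is the reduction in the mixed case: one must be careful that approximating $u$ by $\psi_{k\infty}(u_k)$ only controls $dd_{\Cu}$, not equality of $\Cu$-morphisms on $\Lambda_n$, so one should instead invoke \autoref{thm:existencefd} (or the agreement-on-$\Lambda_n$ statement inside its proof) to get a unitary $u_k\in B_k$ for which $\Cu(\varphi_{u_k})$ agrees with the appropriate finite-stage approximant of $\Cu(\phi)$ exactly on $\Lambda_n$, and then push this forward; alternatively one tracks an explicit $\varepsilon$ smaller than the gaps between the finitely many way-below pairs in $F$ and argues that $\simeq_F$ is an open condition. Either route is routine once set up, so no single step is hard; the care lies entirely in the bookkeeping of which approximation (exact agreement on $\Lambda_n$ versus $dd_{\Cu}$-smallness) is needed where.
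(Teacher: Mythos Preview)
Your outline shares the paper's key move---factoring $\Cu(\phi)$ through a finite-dimensional stage $B_k$ via the semiprojectivity of $\CC(\T)$---but your final step has a gap, and the resulting detour through $\Lambda_n$ and $dd_{\Cu}$-approximations is unnecessary. You propose to lift ``the finitely many elements $\Cu(\varphi_{u_k})(g)$, $g\in F$'' through the $\Cu$-inductive limit, but the definition of weak semiprojectivity demands a genuine $\Cu$-morphism $\alpha_F\colon\Lsc(\T,\overline{\N})\to V_{i_F}$, not merely lifts of finitely many values; knowing where a handful of elements land does not by itself produce such a morphism. The paper's cleaner observation is that $\Cu(B_k)\simeq\overline{\N}^{\,r}$ is generated by finitely many \emph{compact} elements $e_1,\dots,e_r$: their images under $\tau\circ\Cu(\psi_{k\infty})$ are compact in $\lim_{\longrightarrow}V_i$ and hence lift exactly to some $V_{i_F}$ (via \cite[Lemma 3.8]{TV22}), and these $r$ lifts determine an honest $\Cu$-morphism $\tilde{\beta}\colon\overline{\N}^{\,r}\to V_{i_F}$. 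Precomposing, $\alpha_F:=\tilde{\beta}\circ\Cu(\tilde{\phi})$ satisfies $\nu_{i_F\infty}\circ\alpha_F=\tau\circ\Cu(\phi)$ exactly, with no $\simeq_F$ bookkeeping at all.

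A smaller imprecision: you claim a finite $F\subseteq\Lsc(\T,\overline{\N})$ can be enlarged to lie in some $\Lambda_n$ because ``every element of $\Lsc(\T,\overline{\N})$ is a supremum of an increasing sequence of elements of $\bigcup_n\Lambda_n$''. This is false as stated, since each $\Lambda_n$ contains only $\{0,1\}$-valued functions (so e.g.\ $2\cdot 1_\T$ is not a supremum of elements of $\bigcup_n\Lambda_n$). One could repair this via chain-decomposition, but the issue becomes moot once you adopt the paper's exact-lift route through $\overline{\N}^{\,r}$ and drop the approximation layer entirely.
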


\begin{proof}
The only non-trivial case to cover is $\phi\colon \mathcal{C}(\T)\longrightarrow A$ with $A\in \AF$.  From the semiprojectivity of $\mathcal{C}(\T)$, we know that there exists an inductive system $(A_i,\psi_{ij})_i$ of finite dimensional $\CatCa$-algebras with limit $(A,\psi_{i\infty})_i$ and a lift $\tilde{\phi}\colon \mathcal{C}(\T)\longrightarrow A_i$ such that $\psi_{i\infty}\circ\tilde{\phi}=\phi$. 
As a consequence, we know that $\Cu(\phi)=\Cu(\psi_{i\infty})\circ\Cu(\tilde{\phi})$, that is, $\Cu(\phi)$  factors through $\Cu(A_i)$ which can be identified with $\Cu(A_i)\simeq\overline{\N}^{r}$ for some $r\in\N$. 

Now consider any inductive system $(T_i,\tau_{ij})_i$ in $\Cu$ with limit $(T,\tau_{i\infty})$ and let $\beta\colon \Cu(A)\longrightarrow T$ be any $\Cu$-morphism. Hence, it is enough to find a lift of $\beta\circ\Cu(\psi_{i\infty})\colon \overline{\N}^{r}\longrightarrow T$. Let $e_k$ denotes the compact element of $\Cu(A_i)$ that generates the $k$-th copy of $\overline{\N}$. Using the characterization of inductive limits in the category $\Cu$ (see e.g. \cite[Lemma 3.8]{TV22}), it is readily checked that $\beta\circ\Cu(\psi_{i\infty})$ can be realized by $\tau_{i_k\infty}(t_k)$ for some $t_k\in T_{i_k}$. Finally, since the finite $\{x_k\}_{k=1}^r$ entirely determines $\beta\circ\Cu(\psi_{i\infty})$, we can construct a $\Cu$-morphism $\tilde{\beta}\colon \Cu(A_i)\longrightarrow T_{l}$ for some $l\geq i_k$ for all $1\leq k\leq r$ such that $\tau_{l\infty}\circ \tilde{\beta}= \beta\circ\Cu(\psi_{i\infty})$. Precomposing with $\Cu(\tilde{\phi})$ yields the desired result.
\end{proof} 

\textbf{[The restricted functor $\Cu$]} As a consequence of all the above, we can define a restricted functor $\Cu\colon \mathcal{T}\!\star\AF\longrightarrow \Cu_{\wsp}$ and we obtain an immediate corollary.

\begin{cor}
Let $A$ be a unital $\AF$-algebra. Then the (scaled) functor $\Cu\colon \mathcal{T}\!\star\AF\longrightarrow \Cu_{\wsp} $ classifies unitary elements of unital $\AF$-algebras.
\end{cor}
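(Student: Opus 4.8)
The plan is to unwind the definition of ``classifying unitary elements'' and then assemble the results already established in this section. By definition, saying that the scaled functor $\Cu\colon\mathcal{T}\!\star\AF\to\Cu_{\wsp}$ classifies unitary elements of $A$ amounts to saying that $\phi\mapsto\Cu(\phi)$ induces a bijection
\[
\Hom_{\mathcal{T}\!\star\AF,1}(\CC(\T),A)\big/\!\sim_{aue}\ \longrightarrow\ \Hom_{\Cu_{\wsp}}\!\big(\Cu(\CC(\T)),\Cu(A)\big),
\]
where, the functor being scaled, the right-hand hom-set is understood to consist of the weakly semiprojective $\Cu$-morphisms $\alpha\colon\Lsc(\T,\overline{\N})\to\Cu(A)$ with $\alpha(1_\T)=[1_A]$; here one uses the identifications $\Cu(\CC(\T))\simeq\Lsc(\T,\overline{\N})$ and $\mathcal{U}(A)\simeq\Hom_{\CatCa,1}(\CC(\T),A)$ recalled in the preliminaries. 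So I would split the verification into well-definedness, injectivity, and surjectivity of this map.

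For well-definedness, I would first observe that a unital *-homomorphism $\phi\colon\CC(\T)\to A$ yields a $\Cu$-morphism with $\Cu(\phi)(1_\T)=[\phi(1_{\CC(\T)})]=[1_A]$, which is moreover weakly semiprojective by \autoref{rmk:lll}; hence $\Cu(\phi)$ lies in the declared hom-set. That the assignment factors through $\sim_{aue}$ is the elementary fact that Cuntz comparison is preserved by conjugation with unitaries and by norm limits: if $w_n\phi(\cdot)w_n^*\to\psi(\cdot)$ then $[\phi(a)]=[\psi(a)]$ for every positive $a$, so $\Cu(\phi)=\Cu(\psi)$.

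Injectivity and surjectivity are then exactly the uniqueness and existence halves of \autoref{thm:classificationAF}. For injectivity, given unital $\phi,\psi\colon\CC(\T)\to A$ with $\Cu(\phi)=\Cu(\psi)$, I would write $\phi=\varphi_u$ and $\psi=\varphi_v$ via the correspondence of the preliminaries; then $\alpha:=\Cu(\varphi_u)$ is a weakly semiprojective $\Cu$-morphism with $\alpha(1_\T)=[1_A]$, and the uniqueness clause of \autoref{thm:classificationAF} forces $u\sim_{aue}v$, hence $\varphi_u\sim_{aue}\varphi_v$. For surjectivity, any weakly semiprojective $\Cu$-morphism $\alpha$ with $\alpha(1_\T)=[1_A]$ is, by the existence clause of the same theorem, of the form $\Cu(\varphi_u)$ for some unitary $u\in A$, hence lies in the image.

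Since $\mathcal{T}\!\star\AF$ and $\Cu_{\wsp}$ have already been checked to be genuine categories and $\Cu$ to restrict to a functor between them, this is a bookkeeping argument with no serious obstacle; the only points requiring a little care are the routine verifications that $\Cu$ descends to $\sim_{aue}$ and that the scaling constraint matches on the two sides of the correspondence. All the substance was carried out in \autoref{thm:existencefd}, \autoref{thm:uniquenessAF}, \autoref{thm:classificationAF}, and \autoref{rmk:lll}.
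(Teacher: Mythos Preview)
Your proof is correct and follows essentially the same approach as the paper: the paper's own argument is a terse two-line version of what you have written, observing that $\Hom_{\mathcal{T}\!\star\AF}(\CC(\T),A)=\Hom_{\CatCa}(\CC(\T),A)$ and then invoking \autoref{thm:classificationAF} together with \autoref{rmk:lll}. You have simply unpacked the bijection into its well-definedness, injectivity, and surjectivity components, which is entirely appropriate.
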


\begin{proof}
By construction, we have that $\Hom_{\mathcal{T}\!\star\AF}(\CC(\T),A)=\Hom_{\CatCa}(\CC(\T),A)$ for any for any $\AF$-algebra. The result now follows from \autoref{thm:classificationAF} together with \autoref{rmk:lll}.
\end{proof}

\begin{rmk}
(i) A theory of (weakly) semiprojective $\Cu$-semigroups could be developed further, yet it does not lie in the scope of the paper and hence, we do not pursue this idea here. 

(ii) In our opinion, it is very unlikely that $\Cu(\CC(\T))$ is (weakly) semiprojective even though $\CC(\T)$ is. However, as of now, it is unclear whether any abstract $\Cu$-morphism $\alpha\colon\Lsc(\T,\overline{\N})\longrightarrow \Cu(A)$ is automatically weakly semiprojective whenever $A$ is \textquoteleft nice enough\textquoteright\, e.g., A is an $\AF$-algebra.

(iii) In the simple setting, one could use classification machinery (exposed e.g. \cite{CGSTW23}) to deduce that the \textquoteleft unrestricted\textquoteright\ functor $\Cu\colon\CatCa\longrightarrow \Cu$ classifies unitary elements of unital simple $\AF$-algebras.
\end{rmk}

\section{Unitary Elements of $\AH_1$-algebras}\label{sec:4}
In the sequel, we are interested in matrix algebras of continuous functions over any compact 1-dimensional $\CW$-complex, their direct sums and their inductive limits. These $\CatCa$-algebras are precisely the inductive limits of (direct sums of) homogeneous $\CatCa$-algebras with one-dimensional spectrum. Therefore, we refer to this class of $\CatCa$-algebras as \emph{$\AH_1$-algebras}. Observe that $\AH_1$-algebras contain $\AF,\AI$ and $\A\!\T$-algebras and always have a torsion-free $\K_1$-group. 

We divide this section in two parts. We first provide an obstruction to uniqueness by exhibiting two unitary elements in $\CC([0,1])\otimes M_{2^\infty}$ that agree at level of $\Cu$ and yet, fail to be approximately unitarily equivalent. Next, we explore the existence part of the classification. We show some satisfactory results, namely that any $\Cu$-morphism $\alpha:\Lsc(\T,\overline{\N})\longrightarrow \Cu(A)$ such that $\alpha(1_\T)=[1_A]$, where $A$ is any $\AH_1$-algebra, can be approximately lifted up to arbitrary precision. By introducing a notion of \emph{Cauchy sequences}, we are able to improve the latter result by exposing a Cauchy sequence $(\Cu(\varphi_{u_n}))_n$ of concrete $\Cu$-semigroup converging towards $\alpha$ with respect to $dd_{\Cu}$. However, achieving an existence result via our methods requires a uniqueness argument, which does not hold here, as illustrated by the following example.

\subsection{Obstruction to Uniqueness}
\label{subsec:4A}
Let $A:=\CC([0,1])\otimes M_{2^\infty}$. In order to define the desired unitary elements of $A$, we first construct a unitary element $w$ of $M_{2^\infty}$. Recall that $M_{2^\infty}$ is constructed as the inductive limit of the inductive sequence $(M_{2^n},\phi_{nm})_n$ where $\phi_{nn+1}:M_{2^n}\longrightarrow M_{2^{n+1}}$ sends $a\longmapsto \begin{psmallmatrix}a \\&a \end{psmallmatrix}$. 
Now for any $n\in \N$ we consider a unitary element in $M_{2^n}$ defined by 
\[w_n:= \begin{psmallmatrix}1 \\&e^{2i\pi/2^n} \\&&\ddots\\&&&e^{2i\pi(2^n-1)/2^n}\end{psmallmatrix}\]  
It is not hard to compute that $d_U(w_m,\phi_{nm}(w_n))\leq1/2^n-1/ 2^m$. (Roughly speaking, allowing unitary equivalence allows to re-arrange the diagonal terms.) We deduce that the sequence $(\phi_{n\infty}(w_n))_n$ is Cauchy with respect to the metric $d_U$. Similarly as before, we know that the set of unitary elements of $M_{2^\infty}$ is complete with respect to $d_U$. Therefore, the latter sequence converges towards a unitary $w\in M_{2^\infty}$ whose spectrum is $\T$. We now define unitary elements of $A$ as follows \[u:=1_{[0,1]}\otimes w \text{ \hspace{1cm}and\hspace{1cm} } v:=e^{2i\pi\id_{[0,1]}}\otimes \,w\] 
We first aim to show that $u$ and $v$ agree at level of $\Cu$. To do so, we introduce for any $n\in\N$
\[u_n:=1_{[0,1]}\otimes w_n \text{ \hspace{1cm}and\hspace{1cm} } v_n:=e^{2i\pi\id_{[0,1]}}\otimes \,w_n\] 
For notational purposes, let us write $\alpha:=\Cu(\varphi_u)$ and $\beta:=\Cu(\varphi_v)$. Respectively $\alpha_{n}:=\Cu(\varphi_{u_n})$ and $\beta_{n}:=\Cu(\varphi_{v_n})$. 

Let $n\in\N$. We highlight that for an open set $U$ of $\T$, the image of the indicator map $1_U$ by $\alpha_{n},\beta_{n}$ evaluated at some $t\in [0,1]$ corresponds to the number of eigenvalues of $u_n,v_n$ contained in the \textquoteleft space\textquoteright\ $U\subseteq \T$ at \textquoteleft time\textquoteright\ $t\in [0,1]$. Therefore,
%Let $W:=U_{l}\cup(\bigcup\limits_{k=l+1}^{r-1}\overline{U_{k}})\cup U_{r}$ for some $l,r$ with $l\leq r$ and let $Z:=U_{l-1}\cup(\bigcup\limits_{k=l}^{r}\overline{U_{k}})\cup U_{r+1}$. (With the convention $U_{-1}:=U_{2^n}$ and $U_{2^{n+1}}:=U_{1}$.) Consider $g:=1_{W}$ and $h:=1_Z$. Observe that $h$ is the minimal element of the set $\{x\in \Lambda_n\mid g\ll x\}$. For any $t\in X$, we have that
we compute that for $g,h\in \Lambda_n$ such that $g\neq 1_\T$ and $g\ll h$, and any $t\in [0,1]$
\[
\left\{
\begin{array}{ll}
\alpha_{n}(g)(0)=\beta_{n}(g)(0) \\
\alpha_{n}(g)(t)\leq\alpha_{n}(g)(0)+k\leq \alpha_n(h)(t)\\
\beta_n(g)(t)\leq \beta_n(g)(0)+k\leq\beta_n(h)(t) 
\end{array}
\right.
\]
where $k$ is the number of connected components of $\supp g$. This leads to
\[
\left\{
\begin{array}{ll}
\alpha_n(g)(t)\leq  \beta_n(h)(t)\\
\beta_n(g)(t)\leq \alpha_n(h)(t)
\end{array}
\right.
\]
For the particular case $g=1_\T$, we have $\alpha_{n}(g)(t)=\beta_{n}(g)(t)=2^n$ for all $t\in [0,1]$. It follows that $dd_{\Cu}(\alpha_{n},\beta_{n})\leq 1/2^n$. But we also know that $d_U((\id\otimes\phi_{n\infty})(u_n),u),d_U((\id\otimes\phi_{n\infty})(v_n),v)\leq 1/2^n$ from which we deduce that \[dd_{\Cu}(\alpha,\beta)=0\] using standard arguments. Equivalently $u$ and $v$ agree at level of $\Cu$.
However, it seems almost inevitable that they are not close with respect to $d_U$. The intuition behind the fact that $u$ and $v$ are far apart with respect to $d_U$ relies on the fact that $v$ has \textquoteleft movement\textquoteright\ induced by $\id_{[0,1]}$, while $u$ \textquoteleft stays put\textquoteright\ due to $1_{[0,1]}$. This \textquoteleft movement information\textquoteright\ can be captured by the de la Harpe-Skandalis determinant. 
We recall that $A$ is isomorphic to $\CC([0,1],M_{2^\infty})$. It follows that $A/\overline{[A,A]}$ is isomorphic to $\CC([0,1],M_{2^\infty}/\overline{[M_{2^\infty},M_{2^\infty}]})$. From now on, we abusively use this identifications, in the sense that we picture elements of $A$ as continuous functions from $X$ to $M_{2^\infty}$. (Similarly for elements of $A/\overline{[A,A]}$.) 
 On the other hand, it is well-known that $M_{2^\infty}$ has a unique trace $\tau_M$ which happens to be the universal trace on $M_{2^\infty}$. A fortiori $M_{2^\infty}/\overline{[M_{2^\infty},M_{2^\infty}]}\simeq \C$. Therefore, the universal trace $\Tr$ on $A$ is sending any $f\in \CC([0,1],M_{2^\infty})$ to 
\[
\begin{array}{ll}
\Tr(f):[0,1]\longrightarrow \C\\
\hspace{1,35cm}t\longmapsto \tau_M(f(t))
\end{array}
\]
Moreover $f\in\CC([0,1],M_{2^\infty})$ is a projection if (and only if) $f$ is constant with value a projection of $M_{2^\infty}$ and $\K_0(M_{2^\infty})\simeq \Z[\frac{1}{2}]$. We hence deduce that the de la Harpe-Skandalis determinant $\Delta_{\Tr}$ has codomain $\CC([0,1],\C/\Z[\frac{1}{2}])$ and its non-stable version $\CC([0,1],\C)/\{k.1_{[0,1]}\mid k\in \R\}$.

Finally, observe that there exists a self-adjoint element $h\in M_{2^\infty}$ such that $e^{2i\pi h}=w$. (Take the limit of the Cauchy sequence $\phi_{n\infty}(\diag_n(1, 1/2^n,\dots,(2^n-1)/2^n))$.) We compute that 
\[
\Delta_{\Tr}(u): t\longmapsto [\tau_M(h)]_{\C/\Z[\frac{1}{2}]} \text{ \hspace{1cm}and\hspace{1cm} } \Delta_{\Tr}(v):t\longmapsto [\tau_M(h)+t]_{\C/\Z[\frac{1}{2}]}
\]
It is now obvious that $u$ and $v$ have distinct (non-stable) de la Harpe-Skandalis determinants, which implies that they are not approximately unitarily equivalent. (See \autoref{prop:dhs}.)

We remark that similar examples can be easily constructed in $\CC(X)\otimes M_{2^\infty}$ where $X$ is any compact Hausdorff space of dimension 1.

\subsection{On the path towards Existence}
We focus now on the existence part of the classification. Namely, given a unital $\AH_1$-algebra $A$, we investigate whether any $\Cu$-morphism $\alpha:\Cu(\CC(\T))\longrightarrow \Cu(A)$ with $\alpha(1_\T)=[1_A]$ lifts to a *-homomorphism $\phi_{\alpha}:\CC(\T)\longrightarrow A$. 
As for the $\AF$-case, we first have a look at \textquoteleft building blocks\textquoteright. In other words, we consider $\CatCa$-algebras that are (finite) direct sums of matrix algebras of the form $M_d(\CC(X))$, where $X$ is any compact 1-dimensional $\CW$ complex. 

Our strategy involves splitting $X$ into (finitely many) pieces on which the restriction of the abstract $\Cu$-morphism behaves similarly to the finite dimensional case.
We then apply the previous lifting result to each piece, obtaining (finitely many) constant matrices with entries in $\T$. Subsequently, we ought to carefully connect continuously each of these matrices to build a unitary element of $B$ with the desired properties.

Before diving into the main proof, let us present a couple of lemmas that will be of use when cutting $X$ into pieces and gluing continuously the matrices associated to each piece.

\begin{ntn}
Let $X$ be a compact Hausdorff space.

(i)  Let $U$ be an (open) set in $X$. For any $\delta>0$ we write $U_\delta:=\bigcup\limits_{x\in U}\mathcal{B}(x,\delta)$. We also write $\Int_\delta(U)$ to be the largest open set $V$ contained in $U$ such that $V_\delta\subseteq U$. Note that for $\dim X=1$, we always have $(\Int_\delta(U))_\delta=U$.

(ii) Let $f\in \Lsc(X,\overline{\N})$. The chain-decomposition of $f$ is given by $(1_{f^{-1}(]n,\infty])})_{n\in\N}$ and denoted by $\dcp(f)$. (See \cite[Section 4]{C22} for more details.) 
In the sequel, it will be convenient to reindex it as follows: $\dcp(f)=(1_{W_n})_{n\geq 1}$ where $W^f_{n}:=f^{-1}(]n-1,\infty])$ for all $n\geq 1$. In particular, we get that $f_{\mid (W_i^f\setminus W_{i+1}^f)}=i$. 
\end{ntn}

\begin{lma}
\label{lma:cut}
Let $X$ be a compact 1-dimensional $\CW$ complex and let $f_1,\dots,f_p$ be elements in $\Lsc(X,\overline{\N})_{\ll}$. Then there exists a (maximal) finite closed cover $\{\overline{T_l}\}_1^m$ of $X$ such that the restrictions ${f_i}_{\mid T_l}$ are constant for all $1\leq i\leq p$ and all $1\leq l\leq m$. 
\end{lma}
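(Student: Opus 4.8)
\textbf{Proof plan for \autoref{lma:cut}.}

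The plan is to reduce the statement to a combinatorial fact about the finitely many "level sets" of the $f_i$ and the $\CW$-structure of $X$. First I would observe that each $f_i$, being way-below some element of $\Lsc(X,\overline{\N})$, takes only finitely many values, all finite, say $f_i(X)\subseteq\{0,1,\dots,N\}$; equivalently the chain-decomposition $\dcp(f_i)=(1_{W^{f_i}_n})_{n\geq 1}$ stabilizes: $W^{f_i}_n=\emptyset$ for $n>N$. Each $W^{f_i}_n$ is open, and since $f_i$ is way-below something one can also arrange that its closure behaves well; in a $1$-dimensional $\CW$-complex each open set is, up to its boundary, a finite union of (relatively open) cells. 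The common refinement of all these finitely many open sets $\{W^{f_i}_n : 1\leq i\leq p,\ n\geq 1\}$ together with the finitely many closed cells of $X$ gives a finite collection of subsets on which every $f_i$ is constant.

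Concretely, the key steps in order are: (1) record that $f_1,\dots,f_p$ are simple functions with finitely many finite values, so there are finitely many sets of the form $A_{\vec{v}}:=\{x\in X : f_i(x)=v_i \text{ for all }i\}$ as $\vec{v}$ ranges over $\{0,\dots,N\}^p$, and these partition $X$ with each $f_i$ constant on each piece; (2) pass to closures: set $T'_{\vec v}:=\overline{A_{\vec v}}$, but $f_i$ need not be constant on $\overline{A_{\vec v}}$ since lower semicontinuity only forces $f_i$ to possibly jump \emph{down} on limit points, so instead intersect with the cell structure — choose the $\CW$-decomposition of $X$ into closed cells $\{e_j\}$ (finitely many, since $X$ is compact $1$-dimensional) and take the sets $\overline{A_{\vec v}}\cap e_j$; (3) note that on the $1$-skeleton an open set that is a union of cells has boundary a finite set of $0$-cells, so after subdividing the $\CW$-structure finitely often (barycentric subdivision of the edges, refining at the finitely many points where some $W^{f_i}_n$ has its boundary) one may assume each $W^{f_i}_n$ is itself a subcomplex, whence each $A_{\vec v}$ is a union of open cells and each $\overline{A_{\vec v}}$ is a subcomplex on which every $f_i$ is constant; (4) relabel the resulting finite family of closed subcomplexes as $\{\overline{T_l}\}_1^m$, check it covers $X$, and take a maximal such cover to get the parenthetical refinement.

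The main obstacle is step (2)–(3): the interplay between lower semicontinuity and taking closures. On a piece $A_{\vec v}$ the function $f_i$ is constant equal to $v_i$, but a boundary point $x\in\overline{A_{\vec v}}\setminus A_{\vec v}$ can have $f_i(x)<v_i$ (it lies in $W^{f_i}_{v_i}\setminus W^{f_i}_{v_i+1}$ only if $x\in W^{f_i}_{v_i}$, which is open and may fail to contain $x$). The fix is genuinely to use the $1$-dimensionality: the "bad" boundary points are contained in $\bigcup_{i,n}\partial W^{f_i}_n$, a finite set of points (once the $W^{f_i}_n$ are arranged to be subcomplexes, their topological boundaries are finite sets of $0$-cells), and one simply refines the $\CW$-structure so that these finitely many points are $0$-cells. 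Then each $W^{f_i}_n$ is open \emph{and closed in the relevant subcomplex}, so it is a subcomplex, and constancy of $f_i$ on each closed piece of the refined decomposition follows because each closed cell of the refinement lies entirely inside a single difference set $W^{f_i}_n\setminus W^{f_i}_{n+1}$ or is contained in $\bigcap$ of such across all $i$. The remaining verifications — that $\{\overline{T_l}\}$ is a closed cover and that a maximal one exists (finitely many subsets, so Zorn is trivial or one just picks a largest refinement) — are routine.
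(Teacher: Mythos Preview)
Your plan has a genuine gap in step~(3). You assert that after finitely many subdivisions one may arrange each $W^{f_i}_n$ to be a subcomplex, relying on the claim that ``the `bad' boundary points are contained in $\bigcup_{i,n}\partial W^{f_i}_n$, a finite set of points.'' But nothing in the hypotheses forces $\partial W^{f_i}_n$ to be finite. Take $X=[0,1]$, let $C\subseteq[0,1]$ be the Cantor set, and put $f=1_{[0,1]\setminus C}$. Then $f$ is lower semicontinuous and $f\ll 2\cdot 1_{[0,1]}$ (check via the chain-decomposition criterion: $\overline{W^f_1}=[0,1]\subseteq[0,1]=W^{2\cdot 1}_1$), so $f\in\Lsc(X,\overline{\N})_{\ll}$; yet $\partial W^f_1=C$ is uncountable and no finite refinement of the $\CW$ structure makes $[0,1]\setminus C$ a subcomplex. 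Your parenthetical ``once the $W^{f_i}_n$ are arranged to be subcomplexes, their topological boundaries are finite sets of $0$-cells'' is circular: the finiteness is precisely what you need in order to perform the arrangement.

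You are also solving a harder problem than the lemma poses: constancy is required only on $T_l$, not on $\overline{T_l}$, so the worry driving your steps~(2)--(3) is misplaced. The paper bypasses the $\CW$ combinatorics entirely. It sets $T_{\vec v}:=\bigcap_{i}\Int\bigl(f_i^{-1}(\{v_i\})\bigr)$, for which constancy of each $f_i$ on $T_{\vec v}$ and maximality among open sets are immediate from the definition. That $\{\overline{T_{\vec v}}\}$ covers $X$ then follows from the elementary observation that $\bigcup_{\vec v}T_{\vec v}$ is dense: given any nonempty open $V$, shrink it successively to $V\cap W^i_{j_i}$ for the largest $j_i$ with $V\cap W^i_{j_i}\neq\emptyset$ to obtain a nonempty open subset on which every $f_i$ is constant, which by maximality lies in some $T_{\vec v}$. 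Note this argument uses nothing about $\dim X$; the one-dimensionality only enters in the subsequent lemmas.
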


\begin{proof}
Recall that each $f_i$ has a unique chain-decomposition $\dcp(f_i)=(W_1^i,\dots, W_{m_i}^i)$ where $m_i=\max\limits_{t\in X} \{f_i(t)\}<\infty$.  We need to take into account all the possible values taken simultaneously by all the $f_i$. To do so, we consider the finite set $\mathfrak{L}:=\{(l_1,\dots,l_p)\}$ of $p$-tuples whose $i$-th coordinate  belongs to $\{0,\dots,m_i\}$. We point out that $\card(\mathfrak{L}):=\prod\limits_{i=1}^p (m_i+1)$ and thus we can index the elements of $\mathfrak{L}$. Let $(l_1,\dots,l_p)$ to be the $l$-th element of $\mathfrak{L}$. We define the open set $T_l$ of $X$ associated to $(l_1,\dots,l_p)$ as follows:
\[
T_l:=\bigcap\limits_{i=1}^p \Int(W^i_{l_i-1}\setminus W^i_{l_i}) \text{ where } W_0^i:=\Int(X\setminus W_1^i).
\]
By construction, we have that ${f_i}_{\mid T_l}=l_i$ for all $1\leq i\leq p$. Also that $T_{l}$ is maximal, in the sense that for any other open set $T'$ such that ${f_i}_{\mid T'}=l_i$ for all $1\leq i\leq p$, then $T'\subseteq T_{l}$. Consequently, we deduce that $\{\overline{T_{l}}\}_{1}^{\card{\mathfrak{L}}}$ is a finite closed cover of $X$ with the desired properties.
\end{proof}

\begin{lma}
\label{lma:glue}
Let $X$ be a compact 1-dimensional $\CW$ complex. Let $f,g$ be elements in $\Lsc(X,\overline{\N})$ such that $f\ll g$ and let $W_l\in\dcp(f)$ be the $l$-th element in the chain-decomposition of $f$, respectively $Z_l\in\dcp(g)$ for $g$. 

Then, for any open set $U\subseteq W_l$ there exists $\delta>0$ such that $U_\delta\subseteq Z_l$.
\end{lma}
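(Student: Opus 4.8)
\textbf{Proof plan for Lemma~\ref{lma:glue}.}
The plan is to exploit the fact that $f \ll g$ forces the chain-decompositions of $f$ and $g$ to sit inside one another in a compact-containment fashion, and then to use a compactness argument on $\overline U$ to extract the uniform $\delta$. First I would record the relation between $f\ll g$ and their chain-decompositions: by the discussion in \cite[Section~4]{C22}, $f\ll g$ in $\Lsc(X,\overline{\N})$ is equivalent to the statement that for every $l\geq 1$ one has $1_{W_l}\ll 1_{Z_l}$ in $\Lsc(X,\overline{\N})$, which for indicator functions means precisely that $\overline{W_l}\subseteq Z_l$ (the closure being compact since $X$ is). In particular $\overline{U}\subseteq\overline{W_l}\subseteq Z_l$, so $\overline U$ is a compact subset of the open set $Z_l$.

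Next I would produce the $\delta$ by a standard Lebesgue-number / tube-lemma argument: since $X$ is a compact (hence normal, metrizable) space, $\overline U$ is compact and $X\setminus Z_l$ is closed and disjoint from $\overline U$, so the distance $\delta_0:=\mathrm{dist}(\overline U, X\setminus Z_l)$ is strictly positive (with the convention $\delta_0=\infty$ if $Z_l=X$). Taking any $0<\delta<\delta_0$ then gives $\mathcal{B}(x,\delta)\subseteq Z_l$ for every $x\in\overline U\supseteq U$, hence $U_\delta=\bigcup_{x\in U}\mathcal{B}(x,\delta)\subseteq Z_l$, which is exactly the claim.

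The only genuinely substantive point is the first step, i.e. the translation of $f\ll g$ into $\overline{W_l}\subseteq Z_l$ for each $l$; everything after that is soft topology. I would justify it by noting that $W_l=f^{-1}(]\,l-1,\infty])$ and $Z_l=g^{-1}(]\,l-1,\infty])$, that $f\ll g$ yields a strict increase in level on a neighbourhood of $\overline{W_l}$ (this is the content of the chain-decomposition characterization of $\ll$ on $\Lsc(X,\overline{\N})$, where each term of $\dcp(f)$ is way-below the corresponding term of $\dcp(g)$), and that for open subsets of a compact metric space, $1_{W}\ll 1_{Z}$ is equivalent to $\overline W\subseteq Z$. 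If one prefers to avoid invoking the characterization of $\ll$ directly, the same conclusion follows by choosing a $\ll$-increasing sequence approximating $g$ and using that $f$ is way-below its supremum, then reading off levels; but the chain-decomposition route of \cite{C22} is cleanest. I expect no real obstacle here beyond being careful that $\dim X=1$ is not actually needed for this lemma (it is used elsewhere, e.g. for the property $(\Int_\delta(U))_\delta=U$, but the present statement holds for any compact metric space).
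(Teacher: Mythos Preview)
Your proposal is correct and follows essentially the same route as the paper: invoke the chain-decomposition characterization from \cite[Theorem~4.4]{C22} to obtain $\overline{W_l}\subseteq Z_l$, then use compactness of $\overline{W_l}$ (or $\overline U$) inside the open set $Z_l$ to extract a uniform $\delta$. The only cosmetic difference is that the paper applies the distance argument to $W_l$ itself and then notes $U_\delta\subseteq (W_l)_\delta$, whereas you apply it directly to $\overline U$; your observation that $\dim X=1$ is not actually used here is also accurate.
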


\begin{proof}
By \cite[Theorem 4.4]{C22}, we know that $W_l$ is compactly contained in $Z_l$ for each $l\in\N$. (Remark that $W_l$ is ultimately empty.) Equivalently, $\overline{W_l}\subseteq Z_l$ from which we deduce that there exists $\delta>0$ such that ${(W_l)}_\delta\subseteq Z_l$. The lemma follows after observing that $U_\delta\subseteq {(W_l)}_\delta$ for any $\delta>0$.
\end{proof}

Let us now prove the following approximate lifting result in the finite dimensional case.
\begin{thm}
\label{thm:existencefx}
Let $B$ be a direct sum of matrix algebras of continuous functions over 1-dimensional compact $\CW$-complexes. Let $\alpha:\Lsc(\T,\overline{\N})\longrightarrow \Cu(B)$ be a $\Cu$-morphism such that $\alpha(1_\T)=[1_B]$.

For any $n\in\N$, there exists a unitary element $u_n$ in $B$ such that $ \alpha\underset{\Lambda_{n-2}}\simeq \Cu(\varphi_{u_n})$.
\end{thm}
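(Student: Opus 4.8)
The idea is to reduce to the case $B=M_d(\CC(X))$ for a single connected $1$-dimensional compact $\CW$-complex $X$; the general case then follows by taking direct sums of the morphisms (the argument used at the end of the proof of \autoref{thm:existencefd}). Fix $n\in\N$ and work with the equidistant partition $\{x_k\}_0^{2^n}$ inducing $\Lambda_n$, together with the arcs $U_k:=\,]x_{k-1};x_k[$ and $V_k:=\Int(\overline{U_k}\cup\overline{U_{k+1}})$ as in \autoref{thm:existencefd}. The values $\alpha(1_{U_k})$ and $\alpha(1_{V_k})$ now live in $\Cu(M_d(\CC(X)))$, so they are classes of lower semicontinuous $\overline{\N}$-valued functions on $X$ rather than mere integers; but since $X$ is $1$-dimensional the relevant representatives can be taken in $\Lsc(X,\overline{\N})$ directly, and the way-below relation between them is controlled by \cite[Theorem 4.4]{C22} (each chain-decomposition piece is compactly contained in the corresponding piece of a way-above element).

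\textbf{First step: cut $X$ into good pieces.} Apply \autoref{lma:cut} to the finite family $\{\alpha(1_{U_k}),\alpha(1_{V_k})\}_{k=1}^{2^n}$ (or rather to $\ll$-below approximants of these, which exist by (O2)) to obtain a finite closed cover $\{\overline{T_l}\}_1^m$ of $X$ on which every $\alpha(1_{U_k})$ and every $\alpha(1_{V_k})$ is constant. On each open piece $T_l$ the restricted data is exactly a $\Cu$-morphism $\Lsc(\T,\overline{\N})\to\N$ with the normalization condition, so \autoref{thm:existencefd} (more precisely its proof, which yields $\alpha\underset{\Lambda_n}\simeq\Cu(\varphi_{u_n})$ with genuine equality on connected-support elements of $\Lambda_n$) produces a constant unitary matrix $u^{(l)}\in M_d(\C)$, whose diagonal entries are the centers $c_k$ and (with appropriate multiplicities $r_k$) the partition points $x_k$, realizing the piecewise-constant values on $T_l$ up to $\Lambda_n$.

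\textbf{Second step: glue continuously.} This is where \autoref{lma:glue} enters and where the main difficulty lies. Two adjacent pieces $T_l$, $T_{l'}$ meeting along a wall of the $\CW$-complex carry matrices $u^{(l)}$, $u^{(l')}$ whose diagonal entries differ, and one must interpolate across a thin collar so that: (a) the resulting continuous unitary $u_n\in M_d(\CC(X))$ has eigenvalue-counting function that still lies within $\Lambda_{n-2}$-distance of $\alpha$ everywhere on $X$; and (b) the interpolation does not create or destroy eigenvalues outside the arcs prescribed by $\alpha$. The trick is to move each eigenvalue only within an arc of length $\le 2^{-n}$ so that on any element $g\in\Lambda_{n-2}$ (whose indicator arcs are unions of the coarser intervals of size $2^{-(n-2)}$) the count is squeezed between $\alpha(g')$ for $g'\ll g$ in $\Lambda_n$; concretely one uses \autoref{lma:glue} to find a uniform $\delta>0$ such that each relevant open arc $U_k$ thickens into the next chain-decomposition level, giving room to route the paths of eigenvalues through the collar without leaving the $\delta$-neighborhood. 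The crude loss of two levels ($\Lambda_{n-2}$ instead of $\Lambda_n$) is precisely the slack absorbed by this eigenvalue-shuffling across the finitely many walls of $X$, and by the $2$-relaxed triangle inequality for $dd_{\Cu}$.

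\textbf{Third step: conclude.} Once $u_n\in M_d(\CC(X))$ is assembled, verify $\alpha\underset{\Lambda_{n-2}}\simeq\Cu(\varphi_{u_n})$ by the same bookkeeping as in \autoref{thm:existencefd}: for a connected arc $W$ built from consecutive $U_k$'s one writes the identity $1_W+\sum 1_{U_k}=\sum 1_{V_k}$ in $\Lsc(\T,\overline{\N})$, applies both morphisms, and uses that they agree (up to the coarser scale) on each $U_k$ and each $V_k$ pointwise on $X$; cancellation in $\Lsc(X,\overline{\N})$ (valid here because $X$ is $1$-dimensional, cf.\ \cite[Theorem 4.4]{C22}) then gives the comparison on $\Lambda_{n-2}$. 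Finally, for $B=\bigoplus_j M_{d_j}(\CC(X_j))$, decompose $\alpha=(\alpha_j)_j$ with $\alpha_j(1_\T)=[1_{M_{d_j}(\CC(X_j))}]$, apply the above to each summand, and take the direct sum of the resulting unitaries. I expect the continuous gluing in the second step — controlling the eigenvalue trajectories across the walls of the $\CW$-complex while keeping the count within $\Lambda_{n-2}$ of $\alpha$ — to be the real obstacle; everything else is a transcription of the finite-dimensional argument.
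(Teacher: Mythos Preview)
Your overall architecture matches the paper's: cut $X$ into pieces via \autoref{lma:cut}, build a constant diagonal unitary on each piece via the recipe of \autoref{thm:existencefd}, glue across the walls, and then verify the $\Lambda_{n-2}$ comparison. You also correctly locate the real difficulty in the gluing step. However, two points of your sketch are genuinely incomplete.

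\textbf{The matching is not free.} You assert that ``the trick is to move each eigenvalue only within an arc of length $\le 2^{-n}$'', but you give no reason why the eigenvalue multisets $X_l$ and $X_{l'}$ of two adjacent pieces admit a bijection $\sigma_{l,l'}$ with $\Vert\lambda-\sigma_{l,l'}(\lambda)\Vert$ small. This is the heart of the proof, and the paper obtains it via Hall's marriage theorem: for any $\Omega\subseteq X_l$ one takes the minimal $g_\Omega\in\Lambda_n$ whose support contains $\Omega$ and the minimal $h_\Omega\gg g_\Omega$, and then \autoref{lma:glue} (applied to $\alpha(g_\Omega)\ll\alpha(h_\Omega)$) together with $\overline{T_l}\cap\overline{T_{l'}}\neq\emptyset$ forces $\alpha(g_\Omega)_{|T_l}\le\alpha(h_\Omega)_{|T_{l'}}$, which translates into $\card\Omega\le\card e_{G_{l,l'}}(\Omega)$ for the bipartite graph whose edges are pairs at distance $<2/2^n$. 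The marriage condition then produces the desired bijection, and only then can one build the short diagonal paths $p_{l,l'}$. Your invocation of \autoref{lma:glue} is in the right spirit but is used for the wrong purpose: in the paper it feeds the Hall hypothesis, not directly the path construction.

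\textbf{The verification is not the same bookkeeping.} Your third step proposes to rerun the identity $1_W+\sum 1_{U_k}=\sum 1_{V_k}$ and cancel in $\Lsc(X,\overline{\N})$. That argument does establish that $\alpha$ and $\beta:=\Cu(\varphi_{u_n})$ \emph{agree} on $\Lambda_n$ over the interior regions $\Int_\delta(T_l)$, but on the collar regions $C_x$ the eigenvalues are in transit and $\beta(1_{U_k})$ is not constant there, so the cancellation gives nothing. The paper instead proves three local inequalities on each collar: $\alpha(g)=\beta(g)$ on $\partial C_x$; $\alpha(g)(t)\le\alpha(g_{1/2^n})(t')$ for all $t,t'\in\overline{C_x}$ (from \autoref{lma:glue}); and $\beta(g)(t)\le\beta(g_{2/2^n})(t')$ for all $t,t'\in\overline{C_x}$ (because each path $p^\lambda_{l,l'}$ stays in $\mathcal{B}(\lambda,2/2^n)$). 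Chaining these yields $\alpha(g)\le\beta(g_{3/2^n})$ and $\beta(g)\le\alpha(g_{3/2^n})$ on all of $X$, which is exactly the $\Lambda_{n-2}$ comparison. The $2$-relaxed triangle inequality for $dd_{\Cu}$ plays no role here; the loss of two levels comes from the $3/2^n$ thickening.
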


\begin{proof}
We first prove the theorem for $B=M_d(\CC(X))$, where $X$ is any compact 1-dimensional $\CW$ complex. Let $n\in\N$. For each $1\leq k \leq 2^n$, we write $V_k:=\Int(\overline{U_k}\cap\overline{U_{k+1}})$. \\

\emph{Step 1: We cut $X$ into finitely-many carefully-chosen pieces on which we can apply \autoref{thm:existencefd}.} 

By \autoref{lma:cut} there exists a (maximal) finite closed cover $\{\overline{T_l}\}_{1}^m$ such that $\alpha(g)_{\mid T_l}$ is constant for all $g\in\Lambda_n$ and all $1\leq l\leq m$. Consequently, for each $1\leq l\leq m$, we can mimic the construction done in the proof of \autoref{thm:existencefd} (pluging the integers $\alpha(1_{U_k})_{\mid T_l}$, $\alpha(1_{U_{k+1}})_{\mid T_l}$ and $\alpha(1_{V_k})_{\mid T_l}$ into the \textquoteleft algorithm\textquoteright) to obtain a diagonal unitary matrix $u_l\in M_d(\T)$ associated to $T_l$.\\

\emph{Step 2: We show via the Hall's marriage theorem that there exists a bijective correspondence $\sigma_{l,l'}$ between the eigenvalues of $u_l$ and $u_{l'}$ satisfying $\Vert \id - \sigma_{l,l'}\Vert <\frac{2}{2^n}$, whenever $\overline{T_l}\cap \overline{T_{l'}}\neq \emptyset$.}

 For any $1\leq l\leq m$, we write $X_l$ to be the set of eigenvalues of $u_l$. Fix $l,l'$ such that $\overline{T_l}\cap \overline{T_{l'}}\neq \emptyset$ and consider the bipartite graph $G_{l,l'}:=(X_l+X_{l'}, E_{l,l'})$ where
\[
E_{l,l'}:=\{(\lambda, \nu)\in X_l\times X_{l'} \mid \Vert \lambda - \nu\Vert < \frac{2}{2^n}\}.
\]
Let $\Omega\subseteq X_l$ and let $g_\Omega$ the minimal element of $\Lambda_n$ such that $\Omega\subseteq \supp(g_\Omega)$. We write $h_\Omega:=\min\{h\in\Lambda_n \mid g_\Omega\ll h\}$. (The existence of $g_\Omega$ and $h_\Omega$ can be justified by an explicit construction that we leave to the reader.) Observe that $T_l\subseteq \alpha(g_\Omega)^{-1}(]n_l;\infty])$, where $n_l:=\alpha(g_\Omega)_{\mid T_l}-1$. Thus, by \autoref{lma:glue}, there exists $\delta_{l,l'}>0$ such that ${(T_l)}_{\delta_{l,l'}}\subseteq \alpha(h_\Omega)^{-1}(]n_l;\infty])$. Further,  $\overline{T_l}\cap \overline{T_{l'}}\neq \emptyset$ implies that ${(T_l)}_{\delta_{l,l'}}\bigcap T_{l'}\neq \emptyset$. Therefore, there exists $t\in T_{l'}$ such that $\alpha(g_\Omega)_{\mid T_l}=n_l+1\leq \alpha(h_\Omega)(t)=\alpha(h_\Omega)_{\mid T_{l'}}$. On the other hand
\[
	\left\{
	   	\begin{array}{ll}
   			\alpha(g_\Omega)_{\mid T_l}=\card(X_l\cap \supp g_\Omega)=\card \Omega	\\
   			\alpha(h_\Omega)_{\mid T_{l'}}=\card(X_{l'}\cap \supp h_\Omega)
   		\end{array}
	\right.
\]
Lastly, we remark that for any $U_k\subseteq \supp g_\Omega$, we have that $\overline{U_k}\cap \Omega \neq \emptyset$. (By minimality of $g_\Omega$.) It follows that for any element $\nu\in \supp h_\Omega$, there exists an element $\lambda\in X_l\cap \supp g_\Omega$ such that  $\Vert \nu-\lambda\Vert<\frac{2}{2^n}$. In particular, this is true for any $\nu\in \supp h_\Omega\cap X_{l'}$. 

Thus, $X_{l'}\cap \supp h_\Omega\subseteq e_{G_{l,l'}}(\Omega)$ and we finally conclude that 
\[
\card\Omega=\alpha(g_\Omega)_{\mid T_l}\leq \alpha(h_\Omega)_{\mid T_{l'}}\leq \card e_{G_{l,l'}}(\Omega).
\]
By Hall's marriage theorem, see e.g. \cite[Theorem 5.10]{C22}, we obtain a bijective correspondence $\sigma_{l,l'}:X_l\overset{\simeq}\longrightarrow X_{l'}$ such that $\Vert \lambda - \sigma_{l,l'}(\lambda) \Vert < \frac{2}{2^n}$ for any $\lambda\in X_l$. \\

\emph{Step 3: We construct diagonal smooth path matrices $p_{l,l'}:[0,1]\longrightarrow M_d(\T)$ connecting $u_l$ to $u_{l'}$ satisfying $\Vert p_{l,l'}(t) - p_{l,l'}(0)\Vert < \frac{2}{2^n}$ for all $t\in [0,1]$, whenever $\overline{T_l}\cap \overline{T_{l'}}\neq \emptyset$.} 

Fix $l,l'$ such that $\overline{T_l}\cap \overline{T_{l'}}\neq \emptyset$ and let $\lambda\in X_l$. We can build a smooth path $p_{l,l'}^\lambda:[0,1]\longrightarrow \T$ that connects $\lambda$ to $\sigma_{l,l'}(\lambda)$ and such that $p_{l,l'}^\lambda(t)\in \mathcal{B}(\lambda,\frac{2}{2^n})$. We subsequently define a diagonal  \textquoteleft path matrix\textquoteright\ $p_{l,l'}:=\diag(p_{l,l'}^\lambda\mid \lambda\in X_l)$.\\

\emph{Step 4: We construct a unitary $u$ in $B$ piecewisely via the matrices $u_l$ and $p_{l,l'}$.}

Note that $B\simeq \CC(X,M_d)$ and hence, we will construct $u$ as a continuous map from $X$ to $M_d$. Let $x$ be a singularity point, i.e. $x\in\overset{m}{\underset{l=1}{\cup}} \partial(T_l)$. We first assume that there exists a unique pair of indices $l,l'$ such that $x\in\overline{T_l}\cap \overline{T_{l'}}$. (Equivalently, only two closed sets $\overline{T_l},\overline{T_{l'}}$ are meeting at $x$.)
 In particular, there exists a unique pair of connected component $L_x,R_x$ of $T_l,T_{l'}$ respectively such that $x\in\overline{L_x}\cap \overline{R_x}$. Now choose $\delta_{L_x},\delta_{R_x}>0$ small enough, e.g. $\delta_{L_x}<\diam (L_x)/8$, respectively $\delta_{R_x}<\diam (R_x)/8$. We define $C_x$ to be the largest open set that contains $x$ and does not intersect with $\Int_{\delta_{L_x}}(L_x)\cup \Int_{\delta_{R_x}}(R_x)$. In other words, $C_x$ is the connected component of $x$ in the set $(\overline{L_x}\setminus\Int_{\delta_{L_x}}(L_x)\cup (\overline{R_x}\setminus\Int_{\delta_{R_x}}(R_x)$. Let us illustrate these sets in the following picture. \vspace{0,5cm}

\hspace{1cm}\begin{tikzpicture} 
    \coordinate (1) at (0,0);
    \coordinate (2) at (1,0);
    \coordinate (3) at (5,0);
    \coordinate (4) at (6,0);
    \node at (4) [above=-12pt] {$x$};
    \coordinate (5) at (7,0);
    \coordinate (6) at (11,0);
    \coordinate (7) at (12,0);
   %    \draw (1) to[out=150,in=30] (3) ;
   \draw[thick][decorate, decoration = { brace,mirror, raise=10pt}] (1) -- (4) node[pos=0.5,above=-28pt,black]{$L_x$};
   \draw[thick][decorate, decoration = { brace,mirror, raise=10pt}] (4) -- (7) node[pos=0.5,above=-28pt,black]{$R_x$};
   \draw[thick][decorate, decoration = { brace ,raise=10pt}] (3) -- (5) node[pos=0.5,above=12pt,black]{$C_x$};
   \draw[thick][decorate, decoration = { brace ,raise=10pt}] (2) -- (3) node[pos=0.5,above=12pt,black]{$\Int_{\delta_{L_x}}(L_x)$};
    \draw[thick][decorate, decoration = { brace ,raise=10pt}] (5) -- (6) node[pos=0.5,above=12pt,black]{$\Int_{\delta_{R_x}}(R_x)$};
    \draw (1) -- (7) ;
    \foreach \x in {(2), (3), (5), (6)}{\fill \x circle[radius=1pt];}
    \foreach \x in {(1), (4),  (7)}{\fill \x circle[radius=1.5pt];}
\end{tikzpicture}

Now construct a continuous map $u$ on $\Int_{\delta_{L_x}}(L_x)\cup\overline{C_x}\cup\Int_{\delta_{R_x}}(R_x)$ piecewisely as follows.
\[
\left\{
\begin{array}{ll}
	u(t)=u_l \text{ for all } t\in \Int_{\delta_{L_x}}(L_x)	\\
	u(t)=u_{l'} \text{ for all } t\in \Int_{\delta_{R_x}}(R_x)	\\
	u(t)=p_{l,l'}\circ i_x(t) \text{ for } t\in \overline{C_x} 
\end{array}
\right.
\]
where $i_x:\overline{C_x}\simeq [0,1]$ is a continuous map identifying $\overline{\Int_{\delta_{L_x}}(L_x)}\cap \overline{C_x}$ with $0$, $\overline{C_x}\cap \overline{\Int_{\delta_{R_x}}(R_x)}$ with $1$, and such that $i_x(x)=1/2$. 

In the general case, there might be (finitely many) other edges meeting at $x$. (Note that the singular point $x$ has to coincide with a vertex of $X$ for that to happen.) In other words, there might exists some $1\leq\hat{l}\leq m$ distinct from $l,l'$ such that $x\in\overline{T_l}\cap \overline{T_{\hat{l}}}$. Similarly as before, we define $Q_x$ to be the connected component of $T_{\hat{l}}$ such that $x\in \overline{Q_x}$ and $D_x$ to be the largest open set that contains $x$ and does not intersect with $\Int_{\delta_{L_x}}(L_x)\cup \Int_{\delta_{Q_x}}(Q_x)$. (See picture above.) However, we already have constructed the map $u$ on the set $\overline{L_x\cap C_x}$. Therefore, we are left to define $u$ on $\overline{D_x\cap Q_x}\cup\Int_{\delta_{Q_x}}(Q_x)$. We first define $u$ on $\Int_{\delta_{Q_x}}(Q_x)$ by
\[
	u(t)=u_{\hat{l}} \text{ for all } t\in \Int_{\delta_{Q_x}}(Q_x).	
\] 
Secondly, let us concatenate the smooth paths $p_{l,l'}$ and $p_{l,\hat{l}}$ to obtain the following diagonal smooth path matrix
\[
\begin{array}{ll}
	q_{l,\hat{l}}:[0,1]\longrightarrow M_d(\T)\\
					\hspace{1,3cm}t\longmapsto\left\{
									\begin{array}{ll}
									p_{l,l'}(1/2-t)\text{ for } t\in [0,1/2]\\
									p_{l,\hat{l}}(2t-1)\text{ for } t\in [1/2,1]
									\end{array}
									\right.
\end{array}
\] 
Observe that $q_{l,\hat{l}}(0)=u(x)$ and $q_{l,\hat{l}}(1)=u_{\hat{l}}$. Moreover, for each $\lambda\in X_l$, the path $q^\lambda_{l,\hat{l}}$ keeps satisfying that $q^\lambda_{l,\hat{l}}(t)\in \mathcal{B}(\lambda,\frac{2}{2^n})$ for all $t\in [0,1]$.

Lastly, we define $u$ on $\overline{D_x\cap Q_x}$ by
\[
	u(t)=q_{l,\hat{l}}\circ j_x(t) \text{ for } t\in \overline{D_x\cap Q_x} 
\] 
where $j_x:\overline{D_x\cap Q_x}\simeq [0,1]$ is a  map identifying $x$ with $0$ and $\overline{D_x}\cap \overline{\Int_{\delta_{Q_x}}(Q_x)}$ with $1$.

We highlight that during the time interval $\overline{D_x}$, the map $u$ continuously connects each $\lambda$ to $\sigma_{l,\hat{l}}(\lambda)$ and stays within $\mathcal{B}(\lambda,\frac{2}{2^n})$. Further, if more edges are meeting at $x$, they are dealt similarly as $\hat{l}$. 

Finally, in the particular case where only one index $l$ is meeting a given singular point $x$ (which implies that $x$ is not only a vertex of $X$ but also a sink) then we simply define $u(t):=u_l$ on the remaining piece.

In the end, we have built a continuous map $u:X\longrightarrow M_d(\T)$, i.e. a unitary element $u$ of $B$. \\

\emph{Step 5: We compute that $\Cu(\varphi_u)$ and $\alpha$ are close with respect to $dd_{\Cu}$}. 

Let us write $\beta:=\Cu(\varphi_u)$. We aim to prove that $\alpha\underset{\Lambda_{n-2}}\simeq \beta$. It is sufficient to show that $\alpha(g)\leq \beta(g_{3/2^n})$ and $\beta(g)\leq \alpha(g_{3/2^n})$ for any $g\in \Lambda_n$. Let $g\in\Lambda_n$ and let $x,L_x,C_x,R_x$ be as before. (Respectively $x,L_x,D_x,Q_x$ which is dealt similarly.)
Arguing similarly as in the proof of \autoref{thm:existencefd}, we know that $\alpha(g)$ and $\beta(g)$ agree on $\overline{\Int_{\delta_{L_x}}(L_x)}$ (respectively $\overline{\Int_{\delta_{R_x}}(R_x)}$). We are left to deal with the interval $C_x$. First, we expose and prove the following properties. \\
Since $\partial C_x\subseteq \overline{\Int_{\delta_{L_x}}(L_x)}\cup \overline{\Int_{\delta_{R_x}}(R_x)}$, we get that
\begin{equation}
\alpha(g)(t)=\beta(g)(t) \text{ for all } t\in \partial{C_{x}}.
\end{equation}
Combining \autoref{lma:glue} and the fact that $\alpha(g),\alpha({g_{1/2^n}})$ are constant on $\overline{C_x}\setminus\{x\}$, we also get 
\begin{equation}
\alpha(g)(t)\leq \alpha(g_{1/2^n})(t') \text{ for all } t,t'\in \overline{C_{x}}.
\end{equation}
We know that during the time interval $\overline{C_x}$, the path matrix $p^\lambda_{l,l'}\circ i_x$ is moving from $\diag(\lambda\mid \lambda\in X_l)$ to $\diag(\sigma_{l,l'}(\lambda)\mid \lambda\in X_l)$ in such a way that $p^\lambda_{l,l'}\circ i_x(t)\in \mathcal{B}(\lambda,\frac{2}{2^n})$ for all $t\in \overline{C_x}$ and all $\lambda\in X_l$. In other words, the unitary element $u$ is moving each point $\lambda$ of $X_l$ to a (unique) point $\sigma_{l,l'}(\lambda)$ of $X_{l'}$ staying at all time at distance at most $\frac{2}{2^n}$ from $\lambda$. A fortiori, we obtain that
\begin{equation}
\beta(g)(t)\leq \beta(g_{2/2^n})(t') \text{ for all } t,t'\in \overline{C_{x}}.
\end{equation}
Let $t\in C_x$ and let $t'\in \partial C_x$. Then
\[
\left\{
\begin{array}{ll}
\beta(g)(t)\leq \beta(g_{2/2^n})(t')=\alpha(g_{2/2^n})(t')\leq \alpha((g_{2/2^n})_{1/2^n})(t)=\alpha(g_{3/2^n})(t).\\
\alpha(g)(t)\leq \alpha(g_{1/2^n})(t')=\beta(g_{1/2^n})(t')\leq \beta((g_{1/2^n})_{2/2^n})(t)=\beta(g_{3/2^n})(t).
\end{array}
\right.
\] 
(We have successively applied (3)-(1)-(2) in the first inequality and (2)-(1)-(3) in the second.) Ultimately, we have shown that $\alpha(g)\leq \beta(g_{3/2^n})$ and $\beta(g)\leq \alpha(g_{3/2^n})$ for any $g\in \Lambda_n$. The result follows after observing that for any $h,h'\in \Lambda_{n-2}$ such that $h\ll h'$, then $h\ll h_{3/2^n}\ll h'$.

The general case now follows from the observation that any $\Cu$-morphism $\alpha\colon\Lsc(\T,\overline{\N})\longrightarrow \Cu(\bigoplus_1^n M_{d_k}(X_k))$ with $\alpha(1_\T)=[1_B]$ can be decomposed into a direct sum of $\Cu$-morphisms \break$\alpha_k\colon\Lsc(\T,\overline{\N})\longrightarrow \Cu(M_{d_k}(X_k))$ with $\alpha_k(1_\T)=[1_{M_{d_k}(X_k)}]$, that is, $\alpha=(\alpha_k)_1^n$.
\end{proof}

We would like to deduce from the previous theorem that we can find an actual lift. However, with current methods and a fail of the uniqueness result, we are only able to deduce a weaker result in terms of what we call \emph{Cauchy sequences}. These were originally introduced in the first version of this manuscript. In the meantime, the author has obtained a more general result in a joint work with E. Vilalta. We are stating an adapted version of the general statement for our particular context and we omit the proof. We refer the reader to \cite[Definition 3.4/3.5 - Theorem 3.8]{CV23} for more details.

\begin{lma}
\label{lma:Cauchy}
Let $T$ be a $\Cu$-semigroup and let $C\in R_+$ be a positive constant. Let $(\alpha_n)_n$ be a sequence in $\Hom_{\Cu}(\Lsc(\T,\overline{\N}),T)$ such that $dd_{\Cu}(\alpha_{n-1},\alpha_n)\leq \frac{C}{2^n}$ for any $n\in\N$. 
Then the sequence converges with respect to $dd_{\Cu}$ towards a unique $\Cu$-morphism $\alpha\colon \Lsc(\T,\overline{\N})\longrightarrow T$. 
\end{lma}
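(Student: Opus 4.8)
The plan is to realise the limit $\alpha$ as a diagonal supremum of the $\alpha_n$ taken along families of shrinking open subsets of $\T$, in the spirit of the completion arguments for Cauchy sequences of $*$-homomorphisms (cf. \cite{CE08}), and then to check the $\Cu$-morphism axioms by hand. As a first reduction, since $d_{\Cu}\le 2\,dd_{\Cu}$ the hypothesis gives $d_{\Cu}(\alpha_{n-1},\alpha_n)\le C/2^{n-1}$, and since $d_{\Cu}$ is an honest metric, summing a geometric series yields $d_{\Cu}(\alpha_n,\alpha_m)<C/2^{n-1}$ for all $m>n$; thus $(\alpha_n)_n$ is $d_{\Cu}$-Cauchy, and it is enough to produce a $\Cu$-morphism $\alpha$ with $d_{\Cu}(\alpha_n,\alpha)\to 0$, whence also $dd_{\Cu}(\alpha_n,\alpha)\to 0$. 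I would also record at the start that every $f\in\Lsc(\T,\overline{\N})$ is the supremum of the increasing sequence $(\sum_{j=1}^{N}1_{W^f_j})_N$ given by its chain-decomposition, with each $W^f_j$ open; hence it suffices to define $\alpha$ on indicators of open sets and then set $\alpha(f):=\sup_N\sum_{j=1}^{N}\alpha(1_{W^f_j})$.

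To build $\alpha$ on an open set $U\subseteq\T$, call an open set \emph{basic} if its indicator belongs to some $\Lambda_m$, and choose basic sets $V_1\ll V_2\ll\cdots$ with $\bigcup_k V_k=U$, radii $r_k>0$ with $\overline{(V_k)_{r_k}}\subseteq V_{k+1}$ (so in particular $(V_k)_{r_k}\subseteq U$), and indices $n_1<n_2<\cdots$ with $C/2^{n_k-1}<r_k$. Then $d_{\Cu}(\alpha_{n_k},\alpha_{n_{k+1}})<r_k$ forces $\alpha_{n_k}(1_{V_k})\le\alpha_{n_{k+1}}(1_{(V_k)_{r_k}})$, and as $\alpha_{n_{k+1}}$ preserves $\ll$ and $1_{(V_k)_{r_k}}\ll 1_{V_{k+1}}$, the sequence $(\alpha_{n_k}(1_{V_k}))_k$ is $\ll$-increasing; I set $\alpha(1_U):=\sup_k\alpha_{n_k}(1_{V_k})$, which exists by (O1). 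Independence of the choices is a cofinality argument: given a second such system, compactness of each $\overline{(V_k)_{r_k}}$ inside $U$ lets one absorb it into a member of the other family while choosing the companion index as large as desired, so the two suprema dominate each other.

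Next I would verify that $\alpha$ is a $\Cu$-morphism. The technical core is the estimate, proved exactly as in the construction: if $V\subseteq V'$ are basic with $\overline{V}\subseteq V'$ and $m$ satisfies $C/2^{m-1}<d(\overline{V},\T\setminus V')$, then $\alpha_m(1_V)\le\alpha(1_{V'})$, and in fact $\alpha_m(1_V)\ll\alpha(1_{V'})$ (using again that the $\alpha_{m'}$ preserve $\ll$). Granting this: monotonicity of $\alpha$ on open indicators is immediate; additivity follows from the identity $1_{U_1}+1_{U_2}=1_{U_1\cup U_2}+1_{U_1\cap U_2}$, the additivity of each $\alpha_n$, and (O4) applied to common approximating systems for $U_1$ and $U_2$; preservation of $\ll$ of open indicators, $1_V\ll 1_W\Rightarrow\alpha(1_V)\ll\alpha(1_W)$, follows by inserting a basic $V'$ with $\overline{V}\subseteq V'\subseteq\overline{V'}\subseteq W$ and splicing the two halves of the estimate; and preservation of suprema of increasing sequences of open sets follows from compactness, each $\overline{V_k}$ landing inside a single member of the limiting family. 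The extension formula, together with (O3)--(O4) and the chain-decomposition description of the way-below relation in $\Lsc(\T,\overline{\N})$ (\cite[Theorem 4.4]{C22}), then transports all of these properties to arbitrary $f$, so $\alpha$ is a $\Cu$-morphism.

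Finally, for convergence the same kind of argument shows that for every open $U$ and every $r>C/2^{n-1}$ one has $\alpha_n(1_U)\le\alpha(1_{U_r})$ and $\alpha(1_U)\le\alpha_n(1_{U_r})$ (enlarge, respectively shrink, $U$ by a radius strictly between $C/2^{n-1}$ and $r$, then use compactness to pass to a member of the relevant approximating family), so $d_{\Cu}(\alpha_n,\alpha)\le C/2^{n-1}\to 0$ and hence $dd_{\Cu}(\alpha_n,\alpha)\to 0$. For uniqueness, two $dd_{\Cu}$-limits $\alpha,\alpha'$ satisfy $dd_{\Cu}(\alpha,\alpha')\le 2(dd_{\Cu}(\alpha,\alpha_n)+dd_{\Cu}(\alpha_n,\alpha'))\to 0$, so $dd_{\Cu}(\alpha,\alpha')=0$; since $\Lambda_n\subseteq\Lambda_{n+1}$ the set of $n$ with $\alpha\underset{\Lambda_n}{\simeq}\alpha'$ is downward closed, so it equals $\N$, and approximating each $1_U$ by a $\ll$-increasing sequence in $\bigcup_n\Lambda_n$ gives $\alpha(1_U)=\alpha'(1_U)$, whence $\alpha=\alpha'$ by the extension formula. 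I expect the main obstacle to lie in the third step — establishing that the diagonally-defined $\alpha$ is well-defined and, above all, preserves the way-below relation and suprema of increasing sequences — since that is where the delicate interplay between the metric radii $C/2^{n-1}$, the metric geometry of $\T$, and the axioms (O1)--(O4) (and the fact that each $\alpha_n$ is already a $\Cu$-morphism) is concentrated; by comparison the Cauchy and uniqueness steps are soft.
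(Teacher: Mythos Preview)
Your proposal is correct and gives a complete, self-contained argument. The paper's own proof, however, is a one-line citation: the lemma is recorded as a particular case of \cite[Theorem~3.8]{CV23}, where a general completeness theorem for Cauchy sequences of $\Cu$-morphisms (in the sense of \cite[Definitions~3.4--3.5]{CV23}) is established. So there is nothing to compare at the level of local arguments---the paper simply defers to external machinery.

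That said, your direct construction is essentially the mechanism underlying the cited result: build the limit as a diagonal supremum $\alpha(1_U)=\sup_k\alpha_{n_k}(1_{V_k})$ along compatible $\ll$-chains, prove well-definedness by cofinality, and then verify the $\Cu$-axioms using (O1)--(O4) together with the Cauchy bound. Your identification of the delicate step is accurate: the passage from indicators to general $f$ via the chain decomposition, and in particular additivity, requires knowing that the value $\alpha(h)$ does not depend on which $\ll$-increasing approximating sequence of $h$ one uses (not merely on the choice for a single indicator). Once that independence is in hand, additivity follows by choosing common indices for $f$, $g$, and $f+g$ and using additivity of each $\alpha_{n_k}$, rather than via the identity $1_{U_1}+1_{U_2}=1_{U_1\cup U_2}+1_{U_1\cap U_2}$ alone, which only handles pairs of indicators. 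The advantage of your route is that it keeps the paper self-contained and makes the role of the metric estimate $C/2^{n-1}$ transparent; the paper's route buys brevity and places the result in the more general framework of \cite{CV23}, where the same argument is carried out once for arbitrary finitely generated domains.
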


\begin{proof}
This is a particular case of \cite[Theorem 3.8]{CV23}.
\end{proof}

As in \cite{CV23}, we may refer to $(\alpha_n)_n$ as a \emph{Cauchy sequence} and to $\alpha$ as the \emph{limit of the sequence}, or simply say in this context, that the sequence $(\alpha_n)_n$ converges towards $\alpha$ with respect to $dd_{\Cu}$.

\begin{cor}
Let $B$ be a direct sum of matrix algebras of continuous functions over 1-dimensional compact $\CW$-complexes. Let $\alpha:\Lsc(\T,\overline{\N})\longrightarrow \Cu(B)$ be a $\Cu$-morphism such that $\alpha(1_\T)=[1_B]$.

Then there exists a sequence of unitary elements $(u_n)_n$ in $B$ such that $(\Cu(\varphi_{u_n}))_n$ is Cauchy and converges towards $\alpha$ with respect to $dd_{\Cu}$.
\end{cor}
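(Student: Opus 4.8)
The plan is to combine \autoref{thm:existencefx} with \autoref{lma:Cauchy}, mimicking the passage in the proof of \autoref{cor:existencefd} from an approximate lift to a genuine object, but staying on the side of $\Cu$-morphisms since no uniqueness is available here. First I would apply \autoref{thm:existencefx}: for each $n\in\N$, there is a unitary element $u_n$ in $B$ with $\alpha\underset{\Lambda_{n-2}}{\simeq}\Cu(\varphi_{u_n})$. Write $\beta_n:=\Cu(\varphi_{u_n})$; by definition of $dd_{\Cu}$ and the fact that $\alpha\underset{\Lambda_{n-2}}{\simeq}\beta_n$, this gives $dd_{\Cu}(\alpha,\beta_n)\leq \frac{1}{2^{n-2}}$. (One should be slightly careful with the small index shift, but passing to the subsequence $(u_{n+2})_n$, or simply reindexing, absorbs it harmlessly.)

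Next I would check the Cauchy condition required by \autoref{lma:Cauchy}. Using that $dd_{\Cu}$ satisfies the $2$-relaxed triangle inequality (recalled in the preliminaries, via $dd_{\Cu}\leq d_{\Cu}\leq 2dd_{\Cu}$), we compute for $n<m$
\begin{align*}
dd_{\Cu}(\beta_n,\beta_m)&\leq 2\bigl(dd_{\Cu}(\beta_n,\alpha)+dd_{\Cu}(\alpha,\beta_m)\bigr)\\
&\leq 2\Bigl(\tfrac{1}{2^{n-2}}+\tfrac{1}{2^{m-2}}\Bigr)\\
&\leq \tfrac{1}{2^{n-5}}.
\end{align*}
In particular $dd_{\Cu}(\beta_{n-1},\beta_n)\leq \frac{C}{2^n}$ for a suitable absolute constant $C$, so $(\beta_n)_n=(\Cu(\varphi_{u_n}))_n$ is a Cauchy sequence in the sense of \autoref{lma:Cauchy} and converges, with respect to $dd_{\Cu}$, to a unique $\Cu$-morphism $\gamma\colon\Lsc(\T,\overline{\N})\longrightarrow \Cu(B)$.

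Finally I would identify the limit $\gamma$ with $\alpha$. Since $dd_{\Cu}(\alpha,\beta_n)\to 0$ and $dd_{\Cu}(\gamma,\beta_n)\to 0$, the $2$-relaxed triangle inequality gives $dd_{\Cu}(\alpha,\gamma)\leq 2\bigl(dd_{\Cu}(\alpha,\beta_n)+dd_{\Cu}(\beta_n,\gamma)\bigr)\to 0$, hence $dd_{\Cu}(\alpha,\gamma)=0$; as $dd_{\Cu}$ is topologically equivalent to the genuine metric $d_{\Cu}$ (again $d_{\Cu}\leq 2dd_{\Cu}$), this forces $\alpha=\gamma$. Thus $(\Cu(\varphi_{u_n}))_n$ is Cauchy and converges towards $\alpha$ with respect to $dd_{\Cu}$, as required. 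I do not expect any serious obstacle here: the real content was already packaged into \autoref{thm:existencefx}, and the only mild points to watch are the bookkeeping of the index shift in $\Lambda_{n-2}$ and the fact that one must phrase everything through $dd_{\Cu}$ rather than attempt a norm-convergent lift of the $u_n$ themselves — the latter would need a uniqueness statement which, as the section's obstruction examples show, is unavailable in the $\AH_1$ setting.
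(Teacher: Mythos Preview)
Your proposal is correct and follows essentially the same route as the paper's proof: apply \autoref{thm:existencefx} to obtain the $\beta_n$, use the $2$-relaxed triangle inequality to verify the Cauchy condition of \autoref{lma:Cauchy}, and then identify the limit with $\alpha$ by the same inequality. The only cosmetic differences are in the bookkeeping of constants (the paper obtains $\frac{1}{2^{n-4}}$ where you write $\frac{1}{2^{n-5}}$) and that you make explicit why $dd_{\Cu}(\alpha,\gamma)=0$ forces $\alpha=\gamma$ via $d_{\Cu}\leq 2dd_{\Cu}$, which the paper leaves implicit.
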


\begin{proof}
Let $n\in\N$. By \autoref{thm:existencefx} we know that there exists a unitary element $u_n$ in $B$ such that $\beta_n:=\Cu(\varphi_{u_n})\underset{\Lambda_{n-2}}\simeq \alpha$.  We obtain a sequence $(u_n)_n$ of unitary elements in $B$ and for any $n,m\in\N$ with $n\leq m$, we compute that
\begin{align*}
dd_{\Cu}(\beta_n,\beta_m)&\leq 2 (dd_{\Cu}(\beta_n,\alpha)+dd_{\Cu}(\alpha,\beta_m))\\
&\leq 2(\frac{1}{2^{n-2}}+\frac{1}{2^{m-2}})\\
&\leq \frac{1}{2^{n-4}}.
\end{align*}

By \autoref{lma:Cauchy}, we deduce that $(\beta_n)_n$ converges towards a (unique) $\beta:\Lsc(\T,\overline{\N})\longrightarrow \Cu(B)$. Finally, observe that
\[
dd_{\Cu}(\alpha,\beta)\leq 2 (dd_{\Cu}(\alpha,\beta_n)+dd_{\Cu}(\beta_n,\beta))
\leq 2 dd_{\Cu}(\alpha,\beta_n)+ 2 dd_{\Cu}(\beta_n,\beta)
\]
from which we conclude that $dd_{\Cu}(\alpha,\beta)=0$, or equivalently, that $\alpha=\beta$.
\end{proof}

\begin{thm}
Let $A$ be a unital $\AH_1$-algebra. Let $\alpha:\Lsc(\T,\overline{\N})\longrightarrow \Cu(A)$ be a weakly semiprojective $\Cu$-morphism such that $\alpha(1_\T)=[1_A]$.

Then there exists a sequence of unitary elements $(u_n)_n$ in $A$ such that $(\Cu(\varphi_{u_n}))_n$ is Cauchy and converges towards $\alpha$ with respect to $dd_{\Cu}$.
\end{thm}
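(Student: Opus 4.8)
The strategy mirrors the passage from \autoref{thm:existencefd} (finite-dimensional) to \autoref{thm:classificationAF} (unital $\AF$): we already have the ``building-block'' approximate lifting result \autoref{thm:existencefx}, and we now transport it to inductive limits using weak semiprojectivity of $\alpha$, then feed the resulting sequence into \autoref{lma:Cauchy}. Write $A=\lim\limits_{\longrightarrow}(A_i,\phi_{ij})$ where each $A_i$ is a direct sum of matrix algebras of continuous functions over $1$-dimensional compact $\CW$-complexes, and set $S_i:=\Cu(A_i)$, $\sigma_{ij}:=\Cu(\phi_{ij})$, $\sigma_{i\infty}:=\Cu(\phi_{i\infty})$.

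\textbf{Key steps.} First, fix $n\in\N$. Since $\alpha$ is weakly semiprojective and $\Lambda_n$ is finite, there is a finite index $i_n$ and a $\Cu$-morphism $\alpha_n\colon\Lsc(\T,\overline{\N})\longrightarrow S_{i_n}$ with $\sigma_{i_n\infty}\circ\alpha_n\underset{\Lambda_n}\simeq\alpha$; here one should also arrange $\alpha_n(1_\T)=[1_{A_{i_n}}]$, which can be ensured exactly as in the proof of \autoref{rmk:lll} (the value of $\alpha$ at the compact element $1_\T$ is realized at a finite stage, and one passes to a further stage so that the lift sends $1_\T$ to the class of the unit). Second, apply \autoref{thm:existencefx} to $\alpha_n$ over the building block $A_{i_n}$: for the given $n$ we obtain a unitary $u_n'\in A_{i_n}$ with $\Cu(\varphi_{u_n'})\underset{\Lambda_{n-2}}\simeq\alpha_n$. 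Third, push forward: set $u_n:=\phi_{i_n\infty}(u_n')$, a unitary in $A$, and $\beta_n:=\Cu(\varphi_{u_n})=\sigma_{i_n\infty}\circ\Cu(\varphi_{u_n'})$. Because $\Cu(\varphi_{u_n'})$ and $\alpha_n$ agree on $\Lambda_{n-2}$ (not merely compare), composing with $\sigma_{i_n\infty}$ and combining with $\sigma_{i_n\infty}\circ\alpha_n\underset{\Lambda_n}\simeq\alpha$ yields $\beta_n\underset{\Lambda_{n-2}}\simeq\alpha$, hence $dd_{\Cu}(\beta_n,\alpha)\leq\frac{1}{2^{n-2}}$. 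Fourth, using the $2$-relaxed triangle inequality for $dd_{\Cu}$, for $n\leq m$ one gets
\begin{align*}
dd_{\Cu}(\beta_n,\beta_m)&\leq 2\bigl(dd_{\Cu}(\beta_n,\alpha)+dd_{\Cu}(\alpha,\beta_m)\bigr)\\
&\leq 2\bigl(\tfrac{1}{2^{n-2}}+\tfrac{1}{2^{m-2}}\bigr)\leq\tfrac{1}{2^{n-4}},
\end{align*}
so in particular $dd_{\Cu}(\beta_{n-1},\beta_n)\leq\frac{C}{2^n}$ for a suitable constant $C$. Fifth, \autoref{lma:Cauchy} then gives that $(\beta_n)_n$ is a Cauchy sequence converging with respect to $dd_{\Cu}$ to a unique $\Cu$-morphism $\beta$; and since $dd_{\Cu}(\alpha,\beta)\leq 2\bigl(dd_{\Cu}(\alpha,\beta_n)+dd_{\Cu}(\beta_n,\beta)\bigr)\to 0$, we conclude $\alpha=\beta$, so $(u_n)_n$ is the desired sequence.

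\textbf{Main obstacle.} The genuinely delicate point is not the Cauchy/limit bookkeeping (that is identical in spirit to \autoref{cor:existencefd} and the corollary following \autoref{thm:existencefx}) but ensuring the interface between weak semiprojectivity and \autoref{thm:existencefx} is clean: namely that the finite-stage lift $\alpha_n$ can be chosen to be unital in the sense $\alpha_n(1_\T)=[1_{A_{i_n}}]$, since \autoref{thm:existencefx} requires this normalization. This is the same subtlety handled inside the proof of \autoref{rmk:lll}: one first realizes the compact element $\alpha(1_\T)=[1_A]$ at a finite stage as $\sigma_{i\infty}([1_{A_i}])$ after possibly enlarging $i$, and only then invokes weak semiprojectivity with $F=\Lambda_n$, choosing $i_n\geq i$ so that the resulting $\alpha_n$ inherits $\alpha_n(1_\T)=[1_{A_{i_n}}]$. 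Everything else—decomposing over direct summands, pushing unitaries forward along $\phi_{i_n\infty}$, and the metric estimates—is routine given the cited results. Note also, as remarked after the analogous statements, that this does not yield an actual $*$-homomorphism lift of $\alpha$, precisely because the corresponding uniqueness statement fails for $\AH_1$-algebras (Subsection~\ref{subsec:4A}); the conclusion is necessarily phrased in terms of Cauchy sequences of concrete $\Cu$-morphisms.
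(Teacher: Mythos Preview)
Your approach is essentially the same as the paper's, which merely says ``arguing similarly as in the proof of \autoref{thm:classificationAF}'' and then invokes \autoref{lma:Cauchy}; you have faithfully unpacked that sketch. One small correction: you assert that $\Cu(\varphi_{u_n'})$ and $\alpha_n$ \emph{agree} on $\Lambda_{n-2}$, but \autoref{thm:existencefx} only yields the comparison $\Cu(\varphi_{u_n'})\underset{\Lambda_{n-2}}{\simeq}\alpha_n$ (Step~5 of its proof establishes $\alpha(g)\leq\beta(g_{3/2^n})$ and conversely, not equality---the agreement you have in mind is special to the finite-dimensional \autoref{thm:existencefd}). This does not damage the argument: comparison is preserved under post-composition by the $\Cu$-morphism $\sigma_{i_n\infty}$, so $dd_{\Cu}(\beta_n,\sigma_{i_n\infty}\circ\alpha_n)\leq 1/2^{n-2}$, and one extra use of the $2$-relaxed triangle inequality gives $dd_{\Cu}(\beta_n,\alpha)\leq 2(1/2^{n-2}+1/2^{n})\leq C/2^n$, which is all that is needed. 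Your flagging of the unitality issue for $\alpha_n$ is a fair observation; the paper leaves this implicit as well.
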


\begin{proof}
Arguing similarly as in the proof of \autoref{thm:classificationAF}, we can find a sequence $(u_n)_n$ of unitary elements of $A$ such that $dd_{\Cu}(\Cu(\varphi_{u_n}),\alpha)\leq \frac{C}{2^n}$ for some constant $C>0$. We deduce that $dd_{\Cu}(\Cu(\varphi_{u_{n-1}}),\Cu(\varphi_{u_n}))\leq \frac{C'}{2^n}$ for some constant $C'>0$ and the result follows from \autoref{lma:Cauchy}.
\end{proof}

\section{Real rank zero and beyond}
This final section explores the extension of uniqueness results to settings beyond the $\AF$-case, within the stable rank one context. A first and very natural place to start is the real rank zero case, as these $\CatCa$-algebras enjoy a lot of nice properties such as an abundance of projections or the $(1+\epsilon)$ exponential rank for unitary elements that are connected to the identity. In a first part, we show that $\Cu$ distinguishes unitary elements of $\CatCa$-algebras of stable rank one and real rank zero that have trivial $\K_1$-group. As a corollary, we are able to obtain a weaker, but somehow less satisfactory, result when removing the trivial $\K_1$-hypothesis. Finally, we exhibit an obstruction to uniqueness outside of the real rank zero setting by constructing two unitary elements in a $\CatCa$-algebra of stable rank and real rank one, namely the Jiang-Su algebra $\ZZ$, that agree at level of $\Cu$ but are not approximately unitarily equivalent. This obstruction reveals once more the importance of the de la Harpe-Skandalis determinant in the classification of unitary elements.
\subsection{Real rank zero - On the path towards Uniqueness}
Let us recall some nice properties that we have access to whenever being in the stable rank one and real rank zero setting. Let $A$ be a unital $\CatCa$-algebra of stable rank one. It is known that the functor $\Cu$ classifies self-adjoint elements of $A$. (See e.g. \cite{CE08,RS10}.)
If moreover $A$ has real rank zero, then it is shown in \cite{L14} that any self-adjoint can be approximated by a linear combination of projections up to arbitrary precision. Further, any unitary element in the connected component of the identity can be approximated by the exponential of a self adjoint element up to arbitrary precision.

\begin{prop}
Let $A$ be a unital $\CatCa$-algebra of stable rank one. Let $h,h'$ be self-adjoint elements in $A$. Then $d_{\Cu}(\Cu(\varphi_{e^{2i\pi h}}),\Cu(\varphi_{e^{2i\pi h'}}))\leq d_{\Cu}(\Cu(\varphi_{h}),\Cu(\varphi_{h'}))$. The converse inequality holds (and hence, there is equality) whenever $\spectrum(e^{2i\pi h})\cup \spectrum (e^{2i\pi h'})\neq\T$. 
\end{prop}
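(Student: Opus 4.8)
The plan is to reduce everything to metric properties of the exponential map via functional calculus. Write $g\colon\R\to\T$, $g(t)=e^{2i\pi t}$; since $e^{2i\pi h}=g(h)$ by continuous functional calculus, we have $\varphi_{e^{2i\pi h}}=\varphi_h\circ g^{\sharp}$, where $g^{\sharp}$ denotes precomposition with $g$ at the level of function algebras. Applying the functor $\Cu$ and using $\Cu(\CC(\T))\simeq\Lsc(\T,\overline{\N})$ (and the corresponding identification for the functional calculus of $h$), this becomes $\Cu(\varphi_{e^{2i\pi h}})=\Cu(\varphi_h)\circ G$, where $G$ is the $\Cu$-morphism determined by $1_U\longmapsto 1_{g^{-1}(U)}$ for $U$ open. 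So the statement is really a statement about how $d_{\Cu}$ interacts with this precomposition.

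For the first inequality I would use that $g$ is $1$-Lipschitz for the arc-length metric on $\T$ of total length $1$, which is the one underlying the operation $U\mapsto U_r$ here: indeed $d_\T(g(s),g(t))\leq|s-t|$, whence $(g^{-1}(U))_r\subseteq g^{-1}(U_r)$ for every open $U\subseteq\T$ and every $r>0$. Now fix $r>d_{\Cu}(\Cu(\varphi_h),\Cu(\varphi_{h'}))$. For any open $U\subseteq\T$,
\[
\Cu(\varphi_{e^{2i\pi h}})(1_U)=\Cu(\varphi_h)(1_{g^{-1}(U)})\leq\Cu(\varphi_{h'})(1_{(g^{-1}(U))_r})\leq\Cu(\varphi_{h'})(1_{g^{-1}(U_r)})=\Cu(\varphi_{e^{2i\pi h'}})(1_{U_r}),
\]
and symmetrically with $h$ and $h'$ interchanged. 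Taking the infimum over such $r$ gives $d_{\Cu}(\Cu(\varphi_{e^{2i\pi h}}),\Cu(\varphi_{e^{2i\pi h'}}))\leq d_{\Cu}(\Cu(\varphi_h),\Cu(\varphi_{h'}))$.

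For the converse, the spectral hypothesis produces a point $\zeta=e^{2i\pi\theta_0}$ outside $\spectrum(e^{2i\pi h})\cup\spectrum(e^{2i\pi h'})$, i.e.\ a common branch of logarithm for the two unitaries. On the open arc $\T\setminus\{\zeta\}$ the map $g$ restricts to a homeomorphism $J:=(\theta_0,\theta_0+1)\to\T\setminus\{\zeta\}$, with continuous inverse $\ell$; taking $h,h'$ with spectra inside $J$ (which is exactly what the hypothesis allows), we get $h=\ell(e^{2i\pi h})$, $h'=\ell(e^{2i\pi h'})$, hence $\varphi_h=\varphi_{e^{2i\pi h}}\circ\ell^{\sharp}$ and $\Cu(\varphi_h)(1_V)=\Cu(\varphi_{e^{2i\pi h}})(1_{g(V)})$ for open $V\subseteq J$. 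I would then run the computation above backwards: for $r>d_{\Cu}(\Cu(\varphi_{e^{2i\pi h}}),\Cu(\varphi_{e^{2i\pi h'}}))$ small and $V\subseteq J$ open, $\Cu(\varphi_h)(1_V)=\Cu(\varphi_{e^{2i\pi h}})(1_{g(V)})\leq\Cu(\varphi_{e^{2i\pi h'}})(1_{(g(V))_r})$; one checks $(g(V))_r\subseteq g(V_r)$ by a lift argument (choosing the $g$-preimage of each point of $(g(V))_r$ that sits within $r$ of $V$), and since $\zeta\notin\spectrum(e^{2i\pi h'})$ the morphism $\Cu(\varphi_{e^{2i\pi h'}})$ ignores a neighborhood of $\zeta$, so the portion of $V_r$ that spills past the endpoints of $J$ contributes nothing and $\Cu(\varphi_{e^{2i\pi h'}})(1_{g(V_r)})=\Cu(\varphi_{h'})(1_{V_r})$. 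Symmetrically, and taking the infimum over small $r$, this gives $d_{\Cu}(\Cu(\varphi_h),\Cu(\varphi_{h'}))\leq d_{\Cu}(\Cu(\varphi_{e^{2i\pi h}}),\Cu(\varphi_{e^{2i\pi h'}}))$, hence equality.

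The main obstacle I anticipate is exactly that last bookkeeping in the converse: making rigorous the claim that the overspill of $V_r$ past the endpoints of $J$ is harmless, which requires the neighborhood of $\zeta$ avoided by $\spectrum(e^{2i\pi h'})$ to dominate that overspill uniformly for all sufficiently small $r$, and symmetrically for $h$. The rest — the functional-calculus factorizations and the manipulations with the definition of $d_{\Cu}$ — is routine; stable rank one enters only implicitly, as the standing hypothesis under which $d_{\Cu}$ of self-adjoint functional calculi is the relevant invariant.
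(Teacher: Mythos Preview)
Your approach is essentially the paper's: factor $\varphi_{e^{2i\pi h}}=\varphi_h\circ g^\sharp$ through the exponential, use that $g$ is $1$-Lipschitz to get $(g^{-1}(U))_r\subseteq g^{-1}(U_r)$ for the forward inequality, and pass through a common continuous branch of logarithm for the converse. The paper is equally brief on the converse, merely noting that one needs a logarithm that ``simultaneously gives back $h$ and $h'$,'' and your identification of the overspill near the cut as the delicate point is apt.

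One caution on the converse: your parenthetical ``taking $h,h'$ with spectra inside $J$ (which is exactly what the hypothesis allows)'' is not justified by the stated hypothesis. Knowing $\spectrum(e^{2i\pi h})\cup\spectrum(e^{2i\pi h'})\neq\T$ does \emph{not} force $\spectrum(h)$ into an interval of length $<1$; for instance, if $h$ is a nontrivial projection and $h'=0$ then $e^{2i\pi h}=e^{2i\pi h'}=1$, yet no branch $\ell$ satisfies $\ell(1)=h$. (In that example the converse inequality actually fails, so the issue is not cosmetic.) The paper's proof glosses over the same point, and in the subsequent applications the self-adjoints are produced \emph{as} logarithms of unitaries with non-full spectrum, so the needed containment $\spectrum(h),\spectrum(h')\subseteq J$ is in force there. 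Just be aware that this extra spectral assumption on $h,h'$ is what is really being used, not merely the hypothesis on the unitaries.
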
 

\begin{proof}
The idea behind this proof relies on the fact that for any $h\in A_{sa}$, the element $u_h:=e^{2i\pi h}$ obtained by functional calculus belongs to $\CatCa(1,h)$ and that whenever $\spectrum (u_h)$ is not full, there exists a well-chosen (continuous) logarithm $\log:\CC_0(\spectrum (u_h))\longrightarrow \C$ such that $\log(u_h)=h$ and hence, $h$ belongs to $\CatCa(u_h)$. Under such circumstances, we have that $\CatCa(1,h)=\CatCa(u_h)$ and hence, $\varphi_h$ and  $\varphi_{u_h}$ determine one another. 

More specifically, let $h,h'\in A_{sa}$ be contractions and let $\exp\in\CC([-1,1])$ be the map that sends $t\longmapsto e^{2i\pi t}$. Now define $u_h:=\exp(h)$ and $u_{h'}:=\exp(h')$. Then, for any $g\in \CC(\T)$, we have $\varphi_{u_h}(g)= \varphi_h(g\circ \exp)$. (Similarly with $h'$.) Observe that for any open arcs $U,V$ in $\T$ such that $\overline{U}\subseteq V$ and any $r>0$ such that $U_r\subseteq V$, we have that $(\exp^{-1}(U))_r\subseteq \exp^{-1}(V)$ in $[-1,1]$. Also, for any $g\in \CC(\T)$ such that $\supp g = U$, we have $\supp(g_U\circ \exp)=\exp^{-1}(U)$. In particular, let $r:=d_{\Cu}(\Cu(\varphi_{h}),\Cu(\varphi_{h'}))$ let $U,V$ in $\T$ such that $U_r\subseteq V$, we have that $\Cu(\varphi_{u_h})(1_U)=\Cu(\varphi_{h})(1_{\exp^{-1}(U)})\leq \Cu(\varphi_{h})(1_{(\exp^{-1}(U))_r})\leq \Cu(\varphi_{h'})(1_{\exp^{-1}(V)})=\Cu(\varphi_{u_h'})(1_V)$. Similarly, interchanging $h$ and $h'$. It follows that $d_{\Cu}(\Cu(\varphi_{e^{2i\pi h}}),\Cu(\varphi_{e^{2i\pi h'}}))\leq d_{\Cu}(\Cu(\varphi_{h}),\Cu(\varphi_{h'}))$.

A similar argument gives us the converse inequality, but one needs to make sure that there exists a continuous logarithm that \textquoteleft simultaneously\textquoteright\ gives back $h$ and $h'$. It is sufficient that there exists a point $\lambda\in \T$ that does not belong to $\spectrum (u)\cup \spectrum (v)$.
\end{proof}

\begin{prop}
Let $A$ be a unital $\CatCa$-algebra with stable rank one. Let $u,v$ be unitary elements in $A$ such that $dd_{\Cu}(\Cu(\varphi_{u}),\Cu(\varphi_{v}))\leq \frac{1}{2^n}$ and such that $\spectrum (u) \cup \spectrum (v)\neq \T$. 

Then there exists a unitary element $w\in A$ such that $\Vert wuw^*-v\Vert\leq \frac{1}{2^{n-3}} $.
\end{prop}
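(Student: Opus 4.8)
The plan is to reduce the statement to the already available quantitative classification of self-adjoint elements of a $\CatCa$-algebra of stable rank one, exploiting the hypothesis $\spectrum (u)\cup\spectrum (v)\neq\T$ exactly as in the preceding Proposition: a point excluded from both spectra supplies a \emph{common} branch of the logarithm, which lets us replace the two unitaries by two self-adjoint elements while keeping track of the relevant $\Cu$-distance.

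First I would pick $\lambda\in\T\setminus(\spectrum (u)\cup\spectrum (v))$ and let $\theta$ be the branch of $\frac{1}{2i\pi}\log$ on $\T\setminus\{\lambda\}$; it is an isometry onto an open interval of length $1$ for the normalized arc-length metric on $\T$. Since $\spectrum (u)\cup\spectrum (v)$ is compact and avoids $\lambda$, it lies in a closed sub-arc $J$ at positive distance from $\lambda$, carried by $\theta$ onto a compact interval of length $\ell<1$. Setting $h:=\theta(u)$ and $h':=\theta(v)$ by functional calculus, these are self-adjoint with spectra inside $\theta(J)$, and $u=e^{2i\pi h}$, $v=e^{2i\pi h'}$, since $\mathrm{id}_{\T}$ agrees with $e^{2i\pi\,\cdot}\circ\theta$ on $\spectrum (u)\cup\spectrum (v)$.

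Next, because $\spectrum (e^{2i\pi h})\cup\spectrum (e^{2i\pi h'})=\spectrum (u)\cup\spectrum (v)\neq\T$, the preceding Proposition yields $d_{\Cu}(\Cu(\varphi_{h}),\Cu(\varphi_{h'}))=d_{\Cu}(\Cu(\varphi_{u}),\Cu(\varphi_{v}))$; repeating that argument with the equidistant closed covers defining $dd_{\Cu}$ — using that $\theta$ is an isometry and that $J$ is bounded away from $\lambda$, so that the $r$-enlargements occurring in the definition agree inside $J$ and inside $\T$ for all small $r$ — gives likewise $dd_{\Cu}(\Cu(\varphi_{h}),\Cu(\varphi_{h'}))\leq dd_{\Cu}(\Cu(\varphi_{u}),\Cu(\varphi_{v}))\leq\frac{1}{2^{n}}$. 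Since $A$ has stable rank one, $\Cu$ classifies self-adjoint elements of $A$ quantitatively: the metric estimate of \cite{CE08,RS10} (the self-adjoint analogue of \cite[Theorem 5.11]{C22}, with constant $1$ relative to $dd_{\Cu}$) produces, for any $\varepsilon>0$, a unitary $w\in A$ with $\Vert whw^{*}-h'\Vert\leq\frac{1}{2^{n}}+\varepsilon$. Finally, since $wuw^{*}=w\,e^{2i\pi h}\,w^{*}=e^{2i\pi\,whw^{*}}$ and $v=e^{2i\pi h'}$, the elementary bound $\Vert e^{iX}-e^{iY}\Vert\leq\Vert X-Y\Vert$ for self-adjoint $X,Y$ (with $X=2\pi\,whw^{*}$, $Y=2\pi h'$) gives
\[
\Vert wuw^{*}-v\Vert\ \leq\ 2\pi\,\Vert whw^{*}-h'\Vert\ \leq\ 2\pi\Bigl(\tfrac{1}{2^{n}}+\varepsilon\Bigr)\ \leq\ \tfrac{1}{2^{n-3}},
\]
once $\varepsilon$ is chosen small enough, because $2\pi<8=2^{3}$.

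The conceptual heart — trading the unitaries for self-adjoint elements, where classification is known — is robust, so I expect the real work to be the bookkeeping of constants. The factor $2\pi$ is genuine: $t\mapsto e^{2i\pi t}$ is $2\pi$-Lipschitz for the normalized metric on $\T$, and the whole estimate survives only because $2\pi$ is still below $2^{3}$; this is precisely what the shift from $n$ to $n-3$ encodes. In particular one must use the \emph{metric} form of the classification of self-adjoint elements (the bare statement that $\Cu$ distinguishes them is not enough) and with its sharp constant, mirroring the $\AF$ case; and one must transfer $dd_{\Cu}$ along $\theta$ without paying the extra factor $2$ that separates $dd_{\Cu}$ from $d_{\Cu}$, which is why it is worthwhile to keep everything localized in the arc $J$, away from the excised point $\lambda$.
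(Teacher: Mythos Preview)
Your strategy is exactly the paper's: choose $\lambda\notin\spectrum(u)\cup\spectrum(v)$, pass to self-adjoints $h,h'$ via a common continuous logarithm, transfer the $\Cu$-distance along this logarithm, apply the metric classification of \cite{CE08,RS10}, and push back through the exponential using $f(whw^*)=wf(h)w^*$.

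The bookkeeping, however, does not close. Your assertion that the Robert--Santiago estimate carries constant~$1$ relative to $dd_{\Cu}$ is not what those references give: \cite[Theorem~1]{RS10} provides $d_U\leq 4\,d_{\Cu}$, and combined with $d_{\Cu}\leq 2\,dd_{\Cu}$ this yields only $d_U\leq 8\,dd_{\Cu}$. (The sharp constant you invoke from \cite[Theorem~5.11]{C22} is specific to the finite-dimensional case and is not known for arbitrary stable rank one.) With the correct factor your final estimate becomes $2\pi\cdot 8/2^{n}$, which overshoots $1/2^{n-3}$.

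The paper allocates the factor $8$ differently: it spends the full $2\times 4$ on the chain $dd_{\Cu}\to d_{\Cu}\to d_U$, obtaining $\|whw^*-l\|\leq 1/2^{n-3}$ directly, and then asserts the result from $f(whw^*)=wf(h)w^*$ --- in effect not paying for the Lipschitz constant of $t\mapsto e^{2i\pi t}$ that you (correctly) isolate. So your accounting is arguably more careful on the exponential side; the sole defect is the constant you attribute to \cite{RS10}.
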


\begin{proof}
As stated before, the fact that there exists $\lambda\in \T$ such that $\lambda$ does not belong to $\spectrum (u) \cup \spectrum (v)\neq \T$ allows us to find a \textquoteleft common\textquoteright\ continuous logarithm $\log\in C(\T)\setminus\{\lambda\}$. Applying functional calculus, we get two self adjoint contractions $h:=\log(u)$ and $l:=\log(v)$. Arguing similarly as in the previous proof, we know that $dd_{\Cu}(\Cu(\varphi_{h}),\Cu(\varphi_{l}))\leq \frac{1}{2^n}$. It follows that $d_{\Cu}(\Cu(\varphi_{h}),\Cu(\varphi_{l}))\leq \frac{1}{2^{n-1}}$. Now using Ciuperca and Elliott's result (see \cite{CE08}), improved by Robert and Santiago (see \cite[Theorem 1]{RS10}), we deduce that $d_U(h,l)\leq \frac{1}{2^{n-3}}$. That is, there exists a unitary element $w\in A$ such that $\Vert whw^*-l\Vert\leq \frac{1}{2^{n-3}}$. The result follows after noticing that for any $f\in \CC_0([-1,1]\setminus\{0\})$ and any unitary $w\in A$, we have that $f(whw^*)=wf(h)w^*$. (The latter claim is easily proven for polynomial functions first and then for any continuous function by a density argument.) 
\end{proof}

Observe that the assumption $\spectrum (u) \cup \spectrum (v)\neq \T$ (which implies that $[u]=[v]=0_{\K_1(A)})$ is the best we can achieve. If we fix $X=[0,1]$ in the example of \autoref{subsec:4A}, we obtain two unitary elements of a $\CatCa$-algebra with stable rank one that agree at level of $\Cu$, have the same (trivial) $\K_1$-class but are not approximately unitarily equivalent. (The union of the spectrums is indeed covering $\T$.) We next narrow down to the real rank zero and stable rank one setting and we are able to get rid of the assumption on the spectra.

\begin{thm}
Let $A$ be a unital $\CatCa$-algebra with stable rank one and real rank zero. Let $u,v$ be unitary elements in $A$ such that $dd_{\Cu}(\Cu(\varphi_{u}),\Cu(\varphi_{v}))\leq \frac{1}{2^n}$ and $[u]_{\K_1}=[v]_{\K_1}$. 

(i) For any unitary element $z$ in $A$ such that $[z]+[u]=0$ in $\K_1(A)$, e.g. $u^*$ or $v^*$, there exists a unitary element $w\in M_2(A)$ such that $\Vert w(u\oplus z)w^*-(v\oplus z)\Vert\leq \frac{1}{2^{n-7}} $. 

(ii) If moreover $u$ (and $v$) are in the connected component of the identity, then $w$ can be chosen in $A$. In other words, there exists a unitary element $w\in A$ such that $\Vert wuw^*-v\Vert\leq \frac{1}{2^{n-7}} $. 
\end{thm}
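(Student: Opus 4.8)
The plan is to reduce both parts to the preceding proposition, whose only essential hypothesis beyond stable rank one is that $\spectrum(u)\cup\spectrum(v)$ be a proper subset of $\T$; real rank zero will let us enforce that spectral condition after an arbitrarily small perturbation, and for part~(i) a preliminary amplification to $M_2(A)$ will place us inside the connected component of the identity, where this perturbation is available.

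For part~(i), I would first note that $M_2(A)$ again has stable rank one and real rank zero, and $\K_1(M_2(A))\simeq\K_1(A)$. Setting $U:=u\oplus z$ and $V:=v\oplus z$ in $M_2(A)$, we have $[U]_{\K_1}=[u]+[z]=0$ and $[V]_{\K_1}=[v]+[z]=[u]+[z]=0$, so the $\K_1$-injectivity afforded by stable rank one places both $U$ and $V$ in the connected component of $1$ in $\mathcal{U}(M_2(A))$. Next, via the canonical identification $\Cu(M_2(A))\simeq\Cu(A)$ one has $\Cu(\varphi_U)(f)=\Cu(\varphi_u)(f)+\Cu(\varphi_z)(f)$ for every $f\in\Lsc(\T,\overline{\N})$, and similarly for $V$; since $g\ll h$ forces $\Cu(\varphi_z)(g)\leq\Cu(\varphi_z)(h)$, adding $\Cu(\varphi_z)$ to both inequalities defining $\underset{\Lambda_m}{\simeq}$ shows that $\Cu(\varphi_u)\underset{\Lambda_m}{\simeq}\Cu(\varphi_v)$ implies $\Cu(\varphi_U)\underset{\Lambda_m}{\simeq}\Cu(\varphi_V)$, whence $dd_{\Cu}(\Cu(\varphi_U),\Cu(\varphi_V))\leq dd_{\Cu}(\Cu(\varphi_u),\Cu(\varphi_v))\leq\tfrac{1}{2^{n}}$.

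The heart of the argument is the spectral reduction. Since $M_2(A)$ has real rank zero, any unitary in the connected component of $1$ is a norm-limit of unitaries with \emph{finite} spectrum: approximate it by an exponential $e^{2i\pi h}$ of a self-adjoint (using the $(1+\varepsilon)$-exponential rank), then approximate $h$ by a self-adjoint with finite spectrum, i.e.\ a finite linear combination of orthogonal projections, so that the exponential becomes a finite sum $\sum_j e^{2i\pi\lambda_j}p_j$ (see \cite{L14}). I would pick such $U',V'$ with $\|U-U'\|$ and $\|V-V'\|$ as small as required; then $\spectrum(U')\cup\spectrum(V')$ is finite, hence a proper subset of $\T$, and, using $dd_{\Cu}\leq d_U$ together with the $2$-relaxed triangle inequality for $dd_{\Cu}$ routed through $U$ and $V$, the distance $dd_{\Cu}(\Cu(\varphi_{U'}),\Cu(\varphi_{V'}))$ can be made $\leq\tfrac{1}{2^{n-j}}$ for some small absolute $j$ by taking the perturbations small enough. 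Applying the preceding proposition to $U',V'$ in $M_2(A)$ then produces a unitary $w\in M_2(A)$ with $\|wU'w^*-V'\|\leq\tfrac{1}{2^{n-j-3}}$, and a final triangle inequality
\[
\|w(u\oplus z)w^*-(v\oplus z)\|=\|wUw^*-V\|\leq\|U-U'\|+\|wU'w^*-V'\|+\|V-V'\|
\]
yields the bound $\tfrac{1}{2^{n-7}}$ of part~(i) once the perturbations are taken small enough. For part~(ii), when $u$ and $v$ themselves lie in the connected component of $1$ in $\mathcal{U}(A)$, there is no need to amplify: I would run exactly the same spectral-reduction-plus-preceding-proposition argument inside $A$, approximating $u,v$ by finite-spectrum unitaries $u',v'\in A$ with $\spectrum(u')\cup\spectrum(v')\subsetneq\T$ and $dd_{\Cu}(\Cu(\varphi_{u'}),\Cu(\varphi_{v'}))\leq\tfrac{1}{2^{n-j}}$, obtaining $w\in A$ from the preceding proposition and concluding $\|wuw^*-v\|\leq\tfrac{1}{2^{n-7}}$.

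The hard part will be this spectral-reduction step: the whole scheme collapses unless $u,v$ (resp.\ $U,V$) can be replaced by finite-spectrum unitaries while keeping the $dd_{\Cu}$-distance between them controlled, and this is exactly where real rank zero is indispensable — through the $(1+\varepsilon)$-exponential rank for unitaries in the connected component of $1$ and the density of finite-spectrum self-adjoints. A lesser but genuine annoyance is constant-tracking: since $dd_{\Cu}$ satisfies only a $2$-relaxed triangle inequality, every intermediate approximation has to be taken with a safety margin, and carrying these margins through is what accounts for the loss from $\tfrac{1}{2^{n-3}}$ in the preceding proposition to the stated $\tfrac{1}{2^{n-7}}$.
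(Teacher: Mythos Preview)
Your proposal is correct and follows essentially the same route as the paper: use the $(1+\varepsilon)$-exponential rank together with the density of finite-spectrum self-adjoints (real rank zero) to replace the unitaries by ones whose joint spectrum misses a point of $\T$, control the resulting $dd_{\Cu}$-distance via the $2$-relaxed triangle inequality, and then invoke the preceding proposition; the only cosmetic difference is that the paper proves (ii) first and deduces (i) by passing to $M_2(A)$, whereas you do it in the opposite order and, commendably, spell out why the $dd_{\Cu}$-bound survives the amplification $u\mapsto u\oplus z$.
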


\begin{proof}
Let us first prove case (ii) where $[u]=[v]=0$ in $\K_1(A)$. We use the $(1+\epsilon)$ exponential rank obtained in \cite[Theorem 5]{L93} to find self-adjoint elements $h',l'$ in $A$ such that $\Vert u- e^{2i\pi h'} \Vert\leq \frac{1}{2^{n+2}}$ and $\Vert v- e^{2i\pi l'} \Vert\leq \frac{1}{2^{n+2}}$. Next, we use the characterization of real rank zero given in \cite{L14}, to find self-adjoints elements $h,l$ with finite spectrum and such that $\Vert h'- h\Vert\leq \frac{1}{2^{n+2}}$ and $\Vert l'- l\Vert\leq \frac{1}{2^{n+2}}$. We now compute that $\Vert u- e^{2i\pi h} \Vert\leq \frac{1}{2^{n+1}}$ and $\Vert v- e^{2i\pi l} \Vert\leq \frac{1}{2^{n+1}}$. Also that
\begin{align*}
dd_{\Cu}(\varphi_{e^{2i\pi h}},\varphi_{e^{2i\pi l}})&\leq 2dd_{\Cu}(\varphi_{e^{2i\pi h}},\varphi_{u})+4dd_{\Cu}(\varphi_{u},\varphi_{v})+4dd_{\Cu}(\varphi_{v},\varphi_{e^{2i\pi l}})\\
&\leq \frac{1}{2^{n}}+ 4\frac{1}{2^{n}}+ 2\frac{1}{2^{n}}\\
&\leq \frac{1}{2^{n-3}}
\end{align*}
We are now able to compute that
\begin{align*}
d_U(u,v)&\leq d_U(u,e^{2i\pi h})+d_U(e^{2i\pi h},e^{2i\pi l})+d_U(v,e^{2i\pi l})\\
&\leq \frac{1}{2^{n+1}}+8dd_{\Cu}(\varphi_{e^{2i\pi h}},\varphi_{e^{2i\pi l}})+\frac{1}{2^{n+1}}\\
&\leq \frac{1}{2^{n-7}}
\end{align*}
where we have applied the previous proposition to $e^{2i\pi h}$ and $e^{2i\pi l}$ in the second line.
The general case (i) follows after noticing that $M_2(A)$ also has real rank zero and that $[u\oplus z]=[v\oplus z]=0$ in $\K_1(A)$ and applying the latter result.
\end{proof}

\begin{cor}
\label{cor:RR0}
The functor $\Cu$ distinguishes unitary elements of any unital $\CatCa$-algebra with stable rank one, real rank zero and trivial $\K_1$-group.
\end{cor}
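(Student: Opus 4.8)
The plan is to derive this as a direct consequence of the quantitative uniqueness theorem just proved (part (ii)), by letting the precision go to zero, exactly as in the passages to the limit in \autoref{cor:existencefd} and \autoref{thm:classificationAF}. Recall that ``$\Cu$ distinguishes unitary elements of $A$'' means precisely that the induced map $\Hom_{\CatCa}(\CC(\T),A)/\!\sim_{aue}\longrightarrow \Hom_{\Cu}(\Cu(\CC(\T)),\Cu(A))$ is injective; concretely, given two unitary elements $u,v\in A$ with $\Cu(\varphi_u)=\Cu(\varphi_v)$, one has to produce an approximate unitary equivalence $\varphi_u\sim_{aue}\varphi_v$. Since $w\varphi_u(f)w^*=\varphi_{wuw^*}(f)$ for every $f\in\CC(\T)$ and every unitary $w$, continuous functional calculus shows that $\varphi_u\sim_{aue}\varphi_v$ is equivalent to $u\sim_{aue}v$, so it suffices to exhibit a sequence $(w_n)_n$ of unitaries in $A$ with $w_nuw_n^*\to v$.

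First I would reduce to the situation covered by part (ii) of the theorem. Because $A$ has stable rank one, the canonical map $\mathcal{U}(A)/\mathcal{U}_0(A)\to\K_1(A)$ is an isomorphism (Rieffel); as $\K_1(A)$ is trivial, every unitary element of $A$ lies in $\mathcal{U}_0(A)$, so in particular $u$ and $v$ are in the connected component of the identity and $[u]_{\K_1}=[v]_{\K_1}=0$. Thus the hypotheses of part (ii) hold for the pair $(u,v)$.

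Next, from $\Cu(\varphi_u)=\Cu(\varphi_v)$ we get $dd_{\Cu}(\Cu(\varphi_u),\Cu(\varphi_v))=0\leq\frac{1}{2^n}$ for every $n\in\N$. Applying part (ii) of the theorem for each $n$ yields a unitary element $w_n\in A$ with $\Vert w_nuw_n^*-v\Vert\leq\frac{1}{2^{n-7}}$. Since $\frac{1}{2^{n-7}}\to 0$ as $n\to\infty$, the sequence $(w_n)_n$ witnesses $w_nuw_n^*\to v$, i.e.\ $u\sim_{aue}v$, hence $\varphi_u\sim_{aue}\varphi_v$. This establishes the required injectivity.

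As for the main difficulty: there is essentially none at this stage. All the genuine work—the $(1+\epsilon)$-exponential rank estimate, the real-rank-zero approximation by self-adjoints with finite spectrum, and the comparison $d_U(h,l)\lesssim d_{\Cu}(\Cu(\varphi_h),\Cu(\varphi_l))$ via Ciuperca--Elliott and Robert--Santiago—has already been carried out in the preceding theorem. The only non-immediate inputs here are the identification $\mathcal{U}(A)/\mathcal{U}_0(A)\simeq\K_1(A)$ in stable rank one (which disposes of the connectedness hypothesis thanks to triviality of $\K_1$) and the elementary equivalence of $u\sim_{aue}v$ with $\varphi_u\sim_{aue}\varphi_v$; the corollary then follows formally by letting the precision $\frac{1}{2^{n-7}}$ tend to $0$.
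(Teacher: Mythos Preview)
Your proposal is correct and matches the paper's intended argument: the corollary is stated without proof precisely because it follows immediately from part (ii) of the preceding theorem together with the fact that stable rank one and trivial $\K_1$ force every unitary into $\mathcal{U}_0(A)$. Your write-up simply makes explicit the limiting step $\frac{1}{2^{n-7}}\to 0$ and the Rieffel identification $\mathcal{U}(A)/\mathcal{U}_0(A)\simeq\K_1(A)$, which is exactly what the paper leaves implicit.
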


\subsection{Real rank one - Obstruction to Uniqueness}
As observed earlier, the real rank zero is a necessary condition in the latter corollary. Although this fact has already been illustrated in the example constructed in \autoref{subsec:4A}, by providing an obstruction in the real rank one setting, we further reinforce it by presenting an example in the well-known Jiang-Su algebra $\ZZ$. More specifically, we exhibit two unitary elements $u,v$ in $\ZZ$ such that $\Cu(\varphi_u)=\Cu(\varphi_v)$ and yet $u\nsim_{aue} v$. Apart from providing another obstruction to uniqueness in the real rank one setting, (and the simple case this time) the subsequent construction is also worth considering in its own right for the methods it employs.

Let us start by recalling some facts about the Jiang-Su algebra $\ZZ$. We refer the reader to \cite[Section 7]{GP23} for more details. The Jiang-Su algebra is a unital simple (infinite-dimensional) $\CatCa$-algebra that has stable rank one, real rank one, trivial $\K_1$-group and a unique tracial state $\tau:\ZZ\longrightarrow \C$. (And hence, there exists a unique trace that we also write $\tau:\ZZ_+\longrightarrow \R_+$).
The Cuntz semigroup of the Jiang-Su algebra $\Cu(\ZZ)$ is isomorphic to $ \N\,\sqcup\, ]0,\infty]$ where the mixed sum and mixed order are defined as follows: for any $x_c:=k \in \N$, any $x_s:=k\in\, ]0,\infty]$ and any $\epsilon>0$, we have that $x_s\leq x_c\ll x_c \leq x_s+\epsilon$. Moreover, $x_c+x_s=2x_s$. A fortiori, the compact elements are $\N$ and the non-compact elements $]0,\infty]$ constitute an absorbing $\Cu$-subsemigroup. Finally, we have a dichotomy that will be of use in the sequel. (See also \cite{BPT08}.) 

\emph{Fact}: Let $a\in (\ZZ\otimes \KK)_+$ and let $d_\tau:\Cu(\ZZ)\longrightarrow \overline{\R}_+$ be the functional induced by the (unique) trace $\tau$ on $\ZZ$. Then one (and only one) of the following holds. \\Either

(i) $\{0\}$ is an not isolated point in $\spectrum(a)\cup\{0\}$, which is equivalent to

(i') There exists $x_s\in ]0,\infty]$ with $[a]=x_s$, i.e. $[a]$ is not compact, and we have $d_\tau([a])=x_s$. \\
Or

(ii) $\{0\}$ is an isolated point in $\spectrum(a)\cup\{0\}$, which is equivalent to

(ii') There exists $x_c\in \N$ with $[a]=x_c$, i.e. $[a]$ is compact, and we have $d_\tau([a])=x_s$. (Where $x_s$ is the \textquoteleft non-compact version\textquoteright\ of the integer $x_c$.) \\

We next recall a theorem by Markov-Riesz that links measures on the Borel algebra $\mathcal{B}(X)$ and tracial states on the $\CatCa$-algebra $\CC_0(X)$. This will allow us to assign a measure to each positive element of $\ZZ$. 

\begin{thm}[Riesz Theorem for measures] 

Let $X$ be a Hausdorff locally compact space and let $\tau:\CC_0(X)\longrightarrow \C$ be a tracial state. Then there exists a unique extended Borel measure $\mu:\mathcal{B}(X)\longrightarrow \overline{\R}_+$ that is finite on compact sets and such that $\tau(f)=\int_X fd_{\mu}$.
\end{thm}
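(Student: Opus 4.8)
The statement is the classical Riesz--Markov--Kakutani representation theorem, so the plan is to recall its standard proof after one preliminary remark: since $\CC_0(X)$ is commutative, the tracial hypothesis is vacuous, and a state $\tau$ on $\CC_0(X)$ is simply a positive linear functional with $\|\tau\|=1$. We work with the restriction of $\tau$ to $\CC_c(X)$, which is a positive linear functional, build the measure there, and recover the representation on all of $\CC_0(X)$ at the end by density.

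First I would construct the candidate measure. For an open set $U\subseteq X$ put
$$\mu(U):=\sup\{\tau(f)\mid f\in\CC_c(X),\ 0\leq f\leq 1,\ \supp f\subseteq U\},$$
and for an arbitrary $E\subseteq X$ put $\mu^*(E):=\inf\{\mu(U)\mid E\subseteq U,\ U\text{ open}\}$. One checks that $\mu^*$ is an outer measure: monotonicity is clear, and countable subadditivity follows, first on open sets via partitions of unity subordinate to a finite subcover of a relevant compact support, then on arbitrary sets. A Carathéodory argument shows that every open set is $\mu^*$-measurable, hence so is every Borel set; let $\mu$ be the restriction of $\mu^*$ to $\mathcal{B}(X)$. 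Finiteness on compacta is immediate from Urysohn's lemma for locally compact Hausdorff spaces: given $K$ compact, choose $f\in\CC_c(X)$ with $0\leq f\leq 1$ and $f\equiv 1$ on a neighbourhood of $K$; then $\mu(K)\leq\tau(f)<\infty$. (In fact, since $\tau$ is a state one gets $\mu(X)=\|\tau\|=1$, so $\mu$ is a Borel probability measure.)

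The technical heart is the identity $\tau(f)=\int_X f\,d\mu$ for $f\in\CC_c(X)$. By linearity it suffices to prove $\tau(f)\leq\int_X f\,d\mu$ for every real $f\in\CC_c(X)$, since applying this to $-f$ then forces equality. Writing $K:=\supp f$ and choosing $[a,b]\supseteq f(X)$, one partitions $a=y_0<y_1<\cdots<y_N=b$ with mesh $<\epsilon$, sets $E_j:=\{x\in K\mid y_{j-1}<f(x)\leq y_j\}$, picks open sets $U_j\supseteq E_j$ with $f<y_j+\epsilon$ on $U_j$ and $\mu(U_j)<\mu(E_j)+\epsilon/N$, and takes a partition of unity $g_1,\dots,g_N$ subordinate to $\{U_j\}$ with $\sum_j g_j=1$ on $K$. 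Then one compares $\tau(f)=\sum_j\tau(g_j f)\leq\sum_j(y_j+\epsilon)\tau(g_j)$ with the Lebesgue sums $\sum_j y_j\mu(E_j)$ for $\int_X f\,d\mu$, using $\tau(g_j)\leq\mu(U_j)\leq\mu(E_j)+\epsilon/N$ and $\sum_j\tau(g_j)\leq\mu(K)$; after the usual bookkeeping (adding and subtracting a large constant times $\sum_j g_j$ to make the coefficients nonnegative) and letting $\epsilon\to0$ this yields $\tau(f)\leq\int_X f\,d\mu$. Finally, $\CC_c(X)$ is dense in $\CC_0(X)$ and both $f\mapsto\tau(f)$ and $f\mapsto\int_X f\,d\mu$ are norm-bounded (indeed by $\mu(X)=1$), so the representation extends to all of $\CC_0(X)$.

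For uniqueness, note that the construction produces a measure which is outer regular on Borel sets and inner regular on open sets, and that any Borel measure $\nu$ finite on compacta and representing $\tau$ must satisfy $\nu(U)=\mu(U)$ for every open $U$ (both equal the displayed supremum, by Urysohn and monotone approximation); since the open sets form a $\pi$-system, regularity together with a $\pi$--$\lambda$ (monotone class) argument forces $\nu=\mu$ on all of $\mathcal{B}(X)$. The one genuine subtlety — and the step where the word \textbf{unique} should be read with the mild standing hypotheses on $X$ present throughout the paper (metrizable, indeed compact) in mind — is precisely this regularity point: for a pathological non-$\sigma$-compact $X$ uniqueness can fail among \emph{all} Borel measures finite on compacta and one must restrict to Radon measures, whereas under $\sigma$-compactness (a fortiori for the $X$ occurring here) every such measure is automatically Radon and the statement holds verbatim. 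Full details are standard; see e.g. Rudin, \emph{Real and Complex Analysis} (Ch.~2), or Folland, \emph{Real Analysis} (\S7.1).
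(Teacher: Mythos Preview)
Your outline is a correct sketch of the standard Riesz--Markov--Kakutani argument, and indeed you supply more than the paper does: the paper states this theorem as a classical fact without proof and simply invokes it to associate a measure to each positive element of $\mathcal{Z}$. So there is nothing to compare on the level of approach; your proposal is consistent with (and strictly more detailed than) the paper's treatment.

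One minor remark: your closing caveat about uniqueness failing for non-$\sigma$-compact $X$ is well taken, and you are right that in the paper's application $X$ is a subset of $\mathbb{R}$ (the spectrum of a positive element), so the issue does not arise. You might phrase this more directly rather than alluding to ``standing hypotheses on $X$ present throughout the paper'', since the theorem as stated in the paper is for general locally compact Hausdorff $X$ and the restriction to nice $X$ only enters when the theorem is applied.
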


Now, let $a\in \ZZ_+$ be a positive element. Post-composing $\phi_a$ with $\tau$, we obtain a faithful tracial state $\tau_a:=\tau\circ \phi_a:\CC_0(\spectrum(a)\setminus\{0\})\longrightarrow \C$. Using the Markov-Riesz theorem, we can find a unique measure $\mu_a$ such that $\tau_a(f)=\int_{\spectrum(a)\setminus\{0\}}fd\mu_a$ for any $f\in \CC_0(\spectrum(a)\setminus\{0\})$. We will refer to this unique measure $\mu_a$ as the \emph{measure associted to $a$}. 

\begin{prop}
(i) For any $a\in \ZZ_+$ and any $f\in \CC_0(\spectrum(a)\setminus\{0\})_+$ we have that $d_\tau[f(a)]=\mu_a(\supp(f))$. In particular, $[f(a)]=\mu_a(\supp(f))$ whenever $[f(a)]$ is not compact.

(ii) For any $k\in \N$ there exists a positive element $a_k$ in $\ZZ$ with spectrum $]0,k]$ such that its associated measure $\mu_{a_k}$ is $\mathfrak{m}/k$, where $\mathfrak{m}$ is the Lebesgue measure.
\end{prop}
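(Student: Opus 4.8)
For part (i), the plan is to reduce everything to the functional $d_\tau$ and the way it interacts with functional calculus. Given $a \in \ZZ_+$ and $f \in \CC_0(\spectrum(a)\setminus\{0\})_+$, I would first recall that for any positive element $b$ one has $d_\tau([b]) = \lim_m \tau(b^{1/m}) = \mu_b(\spectrum(b)\setminus\{0\})$ whenever the relevant integrals make sense, and more precisely that $d_\tau([f(a)])$ equals the measure (computed with respect to $\tau$) of the open set $\{x : f(x) > 0\} = \supp(f)$. Concretely, $f(a)$ has associated measure $\mu_{f(a)}$ supported on $\overline{f(\spectrum(a))}\setminus\{0\}$, and the pushforward identity $\tau(g(f(a))) = \tau((g\circ f)(a)) = \int (g\circ f)\, d\mu_a$ shows that $\mu_{f(a)} = f_*\mu_a$ on the relevant Borel sets. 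Then $d_\tau([f(a)]) = \mu_{f(a)}(]0,\infty[) = \mu_a(f^{-1}(]0,\infty[)) = \mu_a(\supp f)$. Finally, by the dichotomy Fact recalled above (together with the structure of $\Cu(\ZZ) = \N \sqcup\, ]0,\infty]$), if $[f(a)]$ is non-compact then $[f(a)] = x_s$ with $x_s = d_\tau([f(a)])$, which gives the stated equality $[f(a)] = \mu_a(\supp f)$.

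For part (ii), the strategy is a direct construction inside $\ZZ$ using the density of its Cuntz semigroup and the fact that $\ZZ$ has a unique trace with range $d_\tau(\Cu(\ZZ)) = \N \sqcup\, ]0,\infty]$. Fix $k \in \N$. I would build a positive element $a_k$ with spectrum $]0,k]$ whose associated measure is $\mathfrak{m}/k$, i.e.\ such that $\tau(g(a_k)) = \tfrac{1}{k}\int_0^k g(t)\, dt$ for all $g \in \CC_0(]0,k])$. One clean way: use that $\ZZ$ is $\ZZ$-stable and of stable rank one, so by the classification of positive elements up to Cuntz equivalence via $\Cu$ (Ciuperca–Elliott / Robert–Santiago, cited earlier in the paper), it suffices to produce \emph{any} positive contraction-scaled element realizing the correct value $\mu_a([0,s]) = s/k$ for the "cumulative distribution" — equivalently, to exhibit a $\Cu$-morphism or a $*$-homomorphism $\CC_0(]0,1]) \to \ZZ$ (rescaled to $]0,k]$) whose induced measure on the spectrum is Lebesgue$/k$. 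Since every lower semicontinuous function $]0,k] \to \overline{\R}_+$ of the appropriate form is realized as a dimension function on $\ZZ$ (the range of $d_\tau$ being all of $]0,\infty]$ on non-compact elements), one approximates the "uniform" element by a sequence of positive elements with finitely many spectral values — e.g.\ step functions $a_k^{(j)}$ taking values on a $2^{-j}$-mesh of $]0,k]$, each built from mutually orthogonal positive elements of the correct trace — and takes a limit. The limit exists and has spectrum exactly $]0,k]$ with the desired measure because the traces converge and $\tau$ is faithful. Alternatively, one can invoke directly that $\ZZ \cong \lim (\ZZ_{p_n,q_n}, \iota_n)$ as dimension-drop algebras and write down an explicit self-adjoint element in a finite stage whose normalized eigenvalue-counting measure approximates Lebesgue$/k$, then pass to the limit.

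The main obstacle I expect is part (ii): one must be careful that the constructed element has spectrum \emph{exactly} $]0,k]$ (not a proper closed subset) and that $0$ is \emph{not} isolated in its spectrum, so that we land in case (i') of the dichotomy and the measure-theoretic identity of part (i) applies cleanly; this requires the approximating step-function elements to have spectral values that become dense in $[0,k]$ and accumulate at $0$. The bookkeeping to show $\mu_{a_k} = \mathfrak{m}/k$ exactly — rather than merely approximately — comes down to showing $\tau(a_k^n) = \tfrac1k\int_0^k t^n\, dt$ for all $n$ (moments determine the measure on a compact interval), which is where the limit argument and faithfulness of $\tau$ do the work. Part (i) is then essentially a formal consequence of the pushforward formula for the measure under functional calculus combined with the $\Cu(\ZZ)$-dichotomy, so I would present it first and keep it short.
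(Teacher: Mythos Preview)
Your argument for part (i) is correct and essentially coincides with the paper's: the paper writes $d_\tau([f(a)])=\lim_n\tau((f(a))^{1/n})=\lim_n\int (t^{1/n}\circ f)\,d\mu_a=\mu_a(\supp f)$, which is exactly your pushforward computation unwound.

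For part (ii), your approach diverges from the paper's and is substantially more laborious. The paper does not approximate at all: it writes down directly the $\Cu$-morphism
\[
\alpha_k\colon \Lsc(]0,k],\overline{\N})\longrightarrow \Cu(\ZZ),\qquad f\longmapsto \tfrac{1}{k}\int_{]0,k]} f\,d\mathfrak{m},
\]
checks it is a $\Cu$-morphism, and then invokes Robert's \emph{existence} theorem \cite{R12} to lift $\alpha_k$ to a $*$-homomorphism $\varphi_k\colon \CC_0(]0,k])\to\ZZ$; the element $a_k:=\varphi_k(\id)$ then automatically has spectrum $]0,k]$ and measure $\mathfrak{m}/k$. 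This is a two-line argument once the right black box is identified. Note that the result you cite (Ciuperca--Elliott / Robert--Santiago) is the \emph{uniqueness} half of the classification of maps out of $\CC_0(]0,1])$; what is actually needed here is the \emph{existence} half, which is Robert's theorem. Your step-function/limit construction can be made to work, but it forces you to verify by hand several things (density of the spectrum, non-isolation of $0$, exact moment matching in the limit) that come for free from the lifting theorem, and the dimension-drop alternative you sketch would require even more bookkeeping.
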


\begin{proof}
(i) Let $g_n:t\longmapsto t^{1/n}$. We know that $d_\tau([f(a)])=\lim\limits_n \tau(g_n\circ f(a))=\lim\limits_n \tau_a(g_n\circ f)=\lim\limits_n \int_{\spectrum(a)\setminus\{0\}} (g_n\circ f)d_{\mu_a}=\mu_a(\supp f)$. 

(ii) Let $k\in\N$ and consider the assignement
\[
\begin{array}{ll}
\alpha_k:\Lsc(]0,k],\overline{\N})\longrightarrow \Cu(\ZZ)\\ 
\hspace{2,1cm}f\longmapsto \int_{]0,k]}(f/k)d\mathfrak{m}
\end{array}
\]
It can be checked that $\alpha$ is a $\Cu$-morphism. (E.g. using the chain-decomposition properties of \cite[Theorem 4.4]{C22}.) By the deep result \cite[Theorem 1.0.1]{R12} of Robert, there exists a *-homomorphism $\varphi_k:\CC_0(]0,k])\longrightarrow \ZZ$ lifting $\alpha$, i.e. such that $\Cu(\varphi_k)=\alpha_k$. Finally, by the bijective correspondence between $\Hom_{\CatCa}(\CC_0(\R_+^*),\ZZ)$ and $\ZZ_+$, we obtain a positive element $a_k:=\varphi_k(\id)$ in $\ZZ$ whose spectrum is $]0,k]$ and associated measure is $\mathfrak{m}/k$.
\end{proof}

\begin{thm}
Let $k,l\in \N$ and consider the positive elements $a_k,a_l$ in $\ZZ$ as constructed in the above proposition. Write $u_k:=e^{2i\pi a_k}$ and $u_l:=e^{2i\pi a_l}$.

 Then $\Cu(\varphi_{u_k})=\Cu(\varphi_{u_l})$ and yet, $u\sim_{aue}v$ if and only if $k-l$ is even.
\end{thm}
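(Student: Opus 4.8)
The plan is to compute the two invariants on both sides. First I would verify that $\Cu(\varphi_{u_k})=\Cu(\varphi_{u_l})$ by analyzing the spectrum of $u_k=e^{2i\pi a_k}$. Since $a_k$ has spectrum $]0,k]$ with associated measure $\mathfrak{m}/k$, the exponential map wraps the interval $]0,k]$ around $\T$ exactly $k$ times, so $u_k$ has full spectrum $\T$ and, by the change-of-variables formula together with \autoref{prop:dhs}'s companion computation (part (i) of the preceding proposition), the measure pushed forward to $\T$ is Lebesgue measure $\mathfrak{m}_\T$ normalized to total mass $1$, \emph{independently of $k$}. Concretely, for any open arc $U\subseteq\T$, $\Cu(\varphi_{u_k})(1_U)=[1_U(u_k)]$ is non-compact (since $0$ is not isolated in the spectrum of $1_U(u_k)$, as $\exp^{-1}(U)$ has positive Lebesgue measure inside $]0,k]$ and accumulates nowhere away from a genuine open set) with $d_\tau$-value $\mu_{a_k}(\exp^{-1}(U))=\mathfrak{m}(\exp^{-1}(U))/k=\mathfrak{m}_\T(U)$. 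Hence $\Cu(\varphi_{u_k})$ and $\Cu(\varphi_{u_l})$ send every $1_U$ (for $U$ a proper open subset) to the same non-compact element, and both send $1_\T$ to $[1_\ZZ]=1\in\N$; since such maps are determined on indicators of open sets, $\Cu(\varphi_{u_k})=\Cu(\varphi_{u_l})$.

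Next I would compute the (non-stable) de la Harpe--Skandalis determinant $\overline{\Delta}_\tau(u_k)$ and use \autoref{prop:dhs} for the "only if" direction. Since $\ZZ$ has a unique trace $\tau$ and $\ZZ/\overline{[\ZZ,\ZZ]}\simeq\C$ with $\underline{\tau}\colon\K_0(\ZZ)\simeq\Z\to\C$ the inclusion (because $\tau$ is normalized and $\K_0(\ZZ)\simeq\Z$ generated by $[1_\ZZ]$), the codomain of $\Delta_\tau$ is $\C/\Z$. Now $u_k=e^{2i\pi a_k}$ with $a_k$ self-adjoint, so $\Delta_\tau(u_k)=[\tau(a_k)]\in\C/\Z$, and $\tau(a_k)=\int_{]0,k]}t\,d\mu_{a_k}(t)=\frac1k\int_0^k t\,dt=\frac{k}{2}$. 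Therefore $\Delta_\tau(u_k)=[k/2]\in\C/\Z$, which is $0$ if $k$ is even and $[1/2]$ if $k$ is odd; the same for $u_l$. Since $\im(\underline\tau)=\Z$ is already closed in $\C$, the non-stable version $\overline{\Delta}_\tau$ coincides with $\Delta_\tau$ here, so $\overline{\Delta}_\tau(u_k)-\overline{\Delta}_\tau(u_l)=[(k-l)/2]\in\C/\Z$, which vanishes if and only if $k-l$ is even. Combined with \autoref{prop:dhs} (noting $u_k,u_l$ are connected to the identity via the smooth paths $t\mapsto e^{2i\pi t a_k}$), this gives: if $u_k\sim_{aue}u_l$ then $k-l$ is even.

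For the converse ("if $k-l$ is even then $u_k\sim_{aue}u_l$"), I would invoke Robert's classification theorem \cite[Theorem 1.0.1]{R12} (or the Ciuperca--Elliott/Robert--Santiago results) applied to the self-adjoint elements $a_k, a_l\oplus(\text{appropriate filler})$, or more directly to $*$-homomorphisms from $\CC_0(]0,1])$; the point is that $a_k$ and $a_l$ have Cuntz-equivalent (in fact, up to rescaling, identical) invariants once one accounts for the translation by $(k-l)/2\in\Z=\K_0$-data, which can be realized by a continuous path of self-adjoints whose spectral data matches. The cleanest route: write $v_t := e^{2i\pi(a_k + t\,c)}$ for a suitable self-adjoint $c$ of trace $(l-k)/2$ realizing the $\K_0$-shift, observe $v_1$ and $u_l$ have the same Cuntz class \emph{and} the same $\overline{\Delta}_\tau$, then apply the uniqueness half of Robert's classification (valid for $\ZZ$) to conclude $v_1\sim_{aue}u_l$; finally $u_k\sim_{aue}v_1$ since the path stays inside a commutative subalgebra and the determinant difference is an integer, hence trivial in $\C/\Z$, so no genuine obstruction remains — at which point approximate unitary equivalence follows from the real-rank-one stable-rank-one classification machinery.

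The main obstacle I expect is precisely this converse direction: unlike the $\AF$ case there is no clean "$\Cu$ plus $\K_1$" statement available, and one must genuinely feed in that $\overline{\Delta}_\tau$ is a \emph{complete} invariant together with $\Cu$ for unitaries in $\ZZ$ connected to the identity — which is essentially a repackaging of Robert's classification of $*$-homomorphisms from $\CC_0(]0,1])$ (equivalently $\CC(\T)$ via a logarithm on a non-full spectrum, but here the spectrum \emph{is} full, so one must instead pass through a self-adjoint lift and carefully track the determinant). Making this last step rigorous requires either citing a uniqueness theorem for unitary elements of $\ZZ$ in terms of $(\Cu,\overline{\Delta}_\tau)$ or constructing the implementing sequence of unitaries by hand from the spectral pictures of $a_k$ and $a_l$; I would aim for the former if such a statement is available in the literature, and otherwise build the path explicitly using that $\ZZ$ is $\ZZ$-stable and has the required regularity.
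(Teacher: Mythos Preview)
Your computation of $\Cu(\varphi_{u_k})=\Cu(\varphi_{u_l})$ and of the determinant $\overline{\Delta}_\tau(u_k)=[k/2]\in\C/\Z$ matches the paper's argument essentially line for line, and the ``only if'' direction via \autoref{prop:dhs} is exactly what the paper does.

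The genuine gap is in the converse. Robert's theorem \cite[Theorem~1.0.1]{R12} classifies $*$-homomorphisms whose \emph{domain} has trivial $\K_1$; since $\K_1(\CC(\T))\simeq\Z$ and $u_k$ has full spectrum, you cannot pass through a continuous logarithm and you are outside the scope of that theorem. Your proposed workaround $v_t:=e^{2i\pi(a_k+tc)}$ does not help: even granting that $v_1$ has the correct determinant, concluding $v_1\sim_{aue}u_l$ is precisely the uniqueness-for-unitaries-in-$\ZZ$ statement you are trying to prove, so the argument is circular. The phrase ``real-rank-one stable-rank-one classification machinery'' does not point to any actual theorem that applies here; the results of \cite{CE08,RS10} are about positive (equivalently self-adjoint) elements, and the whole point of \autoref{sec:4} is that their analogue for unitaries fails once the spectrum is all of $\T$.

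The paper closes this gap by invoking Matui's classification of $*$-homomorphisms into simple $\ZZ$-stable $\CatCa$-algebras \cite[Theorem~6.6]{M11}: one checks that $\varphi_{u_k}$ and $\varphi_{u_l}$ agree on $KL$ (here determined by $\K_*$), agree on traces (since they agree on $\Cu$), and satisfy Matui's determinant condition precisely when $k-l$ is even. That is the missing ingredient you were looking for when you wrote ``citing a uniqueness theorem for unitary elements of $\ZZ$ in terms of $(\Cu,\overline{\Delta}_\tau)$''.
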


\begin{proof}
Let $k,l\in \N$ and let $f\in \CC(\T)_+$. We first have to show that $\Cu(\varphi_{u_k})(f)=\Cu(\varphi_{u_l})(f)$. However, we know that $\Cu(\varphi_{u_k})(f)=\Cu(\varphi_{a_k})(f\circ \exp_k)$, where $\exp_k$ is the restriction of the exponential map $\exp:t\longmapsto e^{2i\pi t}$ to the domain $]0,k]$. Similarly for $u_l,a_l$ and $\exp_l$. Now using (i) of the above propostion, we get that 
\begin{align*}
\Cu(\varphi_{u_k})(f)&= \Cu(\varphi_{a_k})(f\circ \exp)\\
&=\mu_{a_k}(\supp (f\circ \exp_k))\\
&=\mathfrak{m}/k(\{t\in ]0,k]\mid f\circ \exp(t)\neq 0\})\\
&=\mathfrak{m}/l(\{t\in ]0,l]\mid f\circ \exp(t)\neq 0\})\\
&=\mu_{a_l}(\supp (f\circ \exp_l))\\
&=\Cu(\varphi_{u_l})(f).
\end{align*}

Next, let us compute the de la Harpe-Skandalis determinant of $u_k$. We observe that the codomain of (both stable and non-stable) de la Harpe-Skandalis determinant is $(\ZZ/\overline{[\ZZ,\ZZ]})/\overline{\tau(\K_0(\ZZ))}\simeq \C/\overline{\Z}\simeq \C/\Z$, where $\tau$ is the unique tracial state on $\ZZ$. Now, we compute that \[\overline{\Delta}_{\Tr}(e^{2i\pi a_k})=[\tau(a_k)]_{\C/\Z}=[\int_0^k td\mu_{a_k}]_{\C/\Z}=[\int_0^k (t/k)d\mathfrak{m}]_{\C/\Z}= [k/2]_{\C/\Z}.\]

The \textquoteleft only if\textquoteright\ part of the theorem now follows from \autoref{prop:dhs}, while the \textquoteleft if\textquoteright\ part of the theorem can be obtained via \cite[Theorem 6.6]{M11}, together with some extra clarifications. (Roughly, in this setting $KL(\varphi_{u_k})$ is entirely determined by $\K_*(\varphi_{u_k})$ and hence, $\varphi_{u_k}$ and $\varphi_{u_l}$ agree on $KL$. Furthermore, they agree on traces since they agree on their Cuntz semigroup. See e.g. \cite[Theorem 3.14]{C24}. Finally, from the above computation of the de la Harpe-Skandalis determinant, it is readily seen that the third condition of (2) in \cite[Theorem 6.6]{M11} is met whenever $k-l$ is even.)
\end{proof}

\section{Closing Remark}
In the aim of classifying a larger class of non-simple $\CatCa$-algebras than the class classified by Robert in \cite{R12}, it is clear that one needs a variation of the Cuntz semigroup that incorporates some kind of $\K_1$-information. The author have been introducing the unitary Cuntz semigroup $\Cu_1$ in \cite{C21a} and extensively studying its properties in \cite{C21a} and \cite{C21b}. As known results in this direction, we shall highlight that the $\K_*$-group, and a fortiori the unitary Cuntz semigroup, are complete invariants for $\A\!\T$-algebras of real rank zero. (See \cite[Theorem 7.3 - 7.4]{ELL93} and \cite[Theorem of the Erratum]{C23}.) Furthermore, the unitary Cuntz semigroup has been able to successfully distinguish two non-simple $\CatCa$-algebras  where the invariant $(\Cu,\K_1)$, and a fortiori the $\K_*$-group, could not. (See \cite{C23}.)

Nevertheless, as highlighted in the present manuscript via the obstruction examples, the incorporation of the Hausdorffized algebraic $\K_1$-group, written $\overline{\K}^{\alg}_1$ (which is intimately related to the de la Harpe-Skandalis determinant through the Nielsen-Thomsen sequence, see \cite{NT96}) instead of the usual $\K_1$-group seems indispensable if one wishes to extend the classification of Robert, say to $\A\!\T$-algebras. This is comforted by the fact that $\overline{\K}^{\alg}_1$ is used in the classification of simple $\A\!\T$-algebras done in \cite{NT96}. Consequently, the author has been working on a generic method, modeled after the construction of the unitary Cuntz semigroup, to build distinct variations of the Cuntz semigroup which enjoy numerous properties and tools for classification. (See \cite{C23b}.) A fortiori, this method allows to define an Hausdorffized version of the unitary Cuntz semigroup. This invariant is a strong candidate to the uniqueness part and hence the classification of unitary elements of $\AH_1$-algebras which would give a positive answer to \cite[Question 16.7]{GP23}.\\

\emph{\underline{Conjecture}:} The Hausdorffized unitary Cuntz semigroup classifies unitary elements of any unital $\AH_1$-algebra.\\

We point out that if the conjecture turns out to be true, we could obtain a complete classification of certain unital $\ASH_1$-algebras, including all $\A\!\T$-algebras as a corollary. (A torsion-free $\K_1$-group kind of assumption would surely arise without the incorporation of the total $\K$-Theory.) Broadly speaking, the approach would mimic the one developed in \cite{R12} by showing that the Hausdorffized unitary Cuntz semigroup satisfies similar permanency properties as the Cuntz semigroup (\cite[Theorem 3.2.2]{R12}) to conclude the classification of $\A\!\T$-algebras. The study of a reduction algorithm through a suitable equivalence relation, as done in \cite[5.1]{R12}, could even yield a larger classification within unital $\ASH_1$-algebras.

\end{document}